\newtheorem{thm}{Theorem}[section]
\newtheorem{proposition}[thm]{Proposition}
\newtheorem{lemma}[thm]{Lemma}
\newtheorem{corollary}[thm]{Corollary}
\numberwithin{equation}{section}
\theoremstyle{definition}
\newtheorem{definition}[thm]{Definition}
\newtheorem{remark}[thm]{Remark}
\newtheorem{example}[thm]{Example}
\newcommand{\qqed}{\hspace*{\fill}$\Box$}
\newcommand{\supp}{\operatorname{Supp}}
\newcommand{\Db}{{\rm D}^{\rm b}}
\newcommand{\D}{{\rm D}}
\newcommand{\Aut}{{\rm Aut}}
\newcommand{\Pic}{{\rm Pic}}
\newcommand{\Hom}{{\rm Hom}}
\newcommand{\Spec}{{\rm Spec}}
\newcommand{\Stab}{{\rm Stab}}
\newcommand{\id}{{\rm id}}
\newcommand{\Ext}{{\rm Ext}}
\newcommand{\ka}{{\mathcal A}}
\newcommand{\kb}{{\mathcal B}}
\newcommand{\kc}{{\mathcal C}}
\newcommand{\ke}{{\mathcal E}}
\newcommand{\kh}{{\mathcal H}}
\newcommand{\ki}{{\mathcal I}}
\newcommand{\kl}{{\mathcal L}}
\newcommand{\km}{{\mathcal M}}
\newcommand{\ko}{{\mathcal O}}
\newcommand{\kp}{{\mathcal P}}
\newcommand{\ks}{{\mathcal S}}
\newcommand{\kt}{{\mathcal T}}
\newcommand{\kz}{{\mathcal Z}}
\newcommand{\IC}{\mathbb{C}}
\newcommand{\IP}{\mathbb{P}}
\newcommand{\IZ}{\mathbb{Z}}
\DeclareMathOperator{\cone}{cone}
\DeclareMathOperator{\Coh}{\bf{Coh}}
\DeclareMathOperator{\FM}{\mathrm{FM}}
\DeclareMathOperator{\HH}{HH}
\DeclareMathOperator{\rk}{rk}
\renewcommand{\to}{\xymatrix@1@=15pt{\ar[r]&}}
\renewcommand{\rightarrow}{\xymatrix@1@=15pt{\ar[r]&}}
\renewcommand{\mapsto}{\xymatrix@1@=15pt{\ar@{|->}[r]&}}
\renewcommand{\twoheadrightarrow}{\xymatrix@1@=15pt{\ar@{->>}[r]&}}
\renewcommand{\hookrightarrow}{\xymatrix@1@=15pt{\ar@{^(->}[r]&}}
\newcommand{\congpf}{\xymatrix@1@=15pt{\ar[r]^-\sim&}}
\renewcommand{\cong}{\simeq}
\newcommand{\G}{{\mathcal G}}
\DeclareMathOperator{\Ho}{H}
\newcommand{\sym}{\mathfrak S}
\newcommand{\reg}{\mathcal O}
\DeclareMathOperator{\Inf}{Inf}
\DeclareMathOperator{\triv}{triv}
\DeclareMathOperator{\sgn}{sgn}
\DeclareMathOperator{\Res}{Res}
\DeclareMathOperator{\nd}{nd}
\newcommand{\alt}{\mathfrak a}
\newcommand{\lex}{<_{\mathrm{lex}}}
\begin{document}

\title[On the derived category...]{On the derived category of the Hilbert scheme of points on an Enriques surface}

\author[A.\ Krug]{Andreas Krug}
\address{Mathematisches Institut, Universit\"at Bonn\\ Endenicher Allee 60\\ 53115 Bonn, Germany}
\email{akrug@math.uni-bonn.de}

\author[P.\ Sosna]{Pawel Sosna}
\address{Fachbereich Mathematik der Universit\"at Hamburg\\
Bundesstra\ss e 55\\
20146 Hamburg, Germany}
\email{pawel.sosna@math.uni-hamburg.de}

\begin{abstract}
We use semi-orthogonal decompositions to construct autoequivalences of Hilbert schemes of points on Enriques surfaces and of Calabi--Yau varieties which cover them. While doing this, we show that the derived category of a surface whose irregularity and geometric genus vanish embeds into the derived category of its Hilbert scheme of points.
\end{abstract}
\maketitle

\section{Introduction}

The bounded derived category of coherent sheaves on a smooth projective complex variety $Z$, denoted by $\Db(Z)$, is now widely recognized as an important invariant which can be used to study the geometry of $Z$. It is therefore quite natural to consider the group of autoequivalences $\Aut(\Db(Z))$. This group always contains the subgroup $\Aut^{\text{st}}(\Db(Z))$ of so-called standard autoequivalences, namely those generated by automorphisms of $Z$, the shift functor and tensor products with line bundles. Note that all these equivalences send coherent sheaves to (shifts of) coherent sheaves. A classical result by Bondal and Orlov, see \cite{Bondal-Orlov}, states that $\Aut^{\text{st}}(\Db(Z))\cong \Aut(\Db(Z))$ if $\omega_Z$ is either ample or anti-ample. Therefore, we expect the most interesting behaviour if the canonical bundle is trivial. For instance, if $\omega_Z\cong\ko_Z$ and $\Ho^i(Z,\omega_Z)=0$ for all $0<i<\dim(Z)$, then $\Aut(\Db(Z))$ contains so-called spherical twists, which are more interesting than the ones mentioned above since, in general, they do not preserve the abelian category of coherent sheaves; see \cite{Seidel-Thomas}.

It is usually quite difficult to construct new autoequivalences of a given variety. However, if $\ka$ is some triangulated category and an exact functor $\Phi\colon \ka \to \Db(Z)$ is a so-called spherical or $\IP^n$-functor, see Subsection \ref{sphericalpn}, we do get an autoequivalence of $\Db(Z)$. For example, let $\tilde{X}$ be a K3 surface and $\tilde{X}^{[n]}$ its Hilbert scheme of $n$ points. It was shown in \cite{Addington} that the Fourier--Mukai functor $\Db(\tilde{X})\to \Db(\tilde{X}^{[n]})$ with the ideal sheaf of the universal family as kernel is a $\IP^{n-1}$-functor whose associated autoequivalence is new. Interestingly, for an abelian surface $A$ the corresponding functor is not a $\IP^{n-1}$-functor, but pulling everything to the generalised Kummer variety does give one; see \cite{Meachan}. Another example was given in \cite{Krug1} where it was shown that, given any surface $S$, there is a $\IP^{n-1}$-functor $\Db(S)\to \Db(S^{[n]})$ which is defined using equivariant methods.

In this paper we will construct new examples of spherical functors using Enriques surfaces and Hilbert schemes of points on them. Let $X$ be an Enriques surface and $X^{[n]}$ the Hilbert scheme of $n$ points on $X$. The canonical cover of $X^{[n]}$ is a Calabi--Yau variety (see \cite{Nieper} or \cite{Oguiso-Schroer}) and will be denoted by $\mathrm{CY}_n$. Write $\pi\colon \mathrm{CY}_n\to X^{[n]}$ for the quotient map. Consider the Fourier--Mukai functor $F\colon \Db(X)\to\Db(X^{[n]})$ induced by the ideal sheaf of the universal family.

\begin{thm}[Theorem \ref{maintheorem}]
The functor 
\[\tilde{F}=\pi^*F\colon \Db(X)\to \Db(\mathrm{CY}_n)\]
is split spherical for all $n\geq 2$ and the associated twist $\tilde{T}$ is equivariant, so descends to an autoequivalence of $X^{[n]}$. The autoequivalence $\tilde{T}$ of $\Db(\mathrm{CY}_n)$ is not standard and not a spherical twist.
\end{thm}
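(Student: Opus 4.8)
The plan is to check the spherical-functor axioms for $\tilde F=\pi^{*}F$ by reducing everything to one computation of the monad $\tilde R\tilde F$, and then to read off $\tilde T$ and its properties from the resulting formula. First, since $\omega_{X^{[n]}}$ is $2$-torsion, $\pi\colon\mathrm{CY}_{n}\to X^{[n]}$ is an \'etale double cover; write $\iota$ for its free deck involution. Then $\pi^{*}$ has both a left and a right adjoint equal to $\pi_{*}$, and $\pi_{*}\pi^{*}(-)\cong(-)\oplus\bigl((-)\otimes\omega_{X^{[n]}}\bigr)$, so $\tilde F$ has right adjoint $\tilde R=R_{F}\pi_{*}$ and left adjoint $\tilde L=L_{F}\pi_{*}$, with
\[
\tilde R\tilde F\ \cong\ R_{F}F\ \oplus\ R_{F}\bigl(F(-)\otimes\omega_{X^{[n]}}\bigr).
\]
Moreover $\pi^{*}\omega_{X^{[n]}}=\omega_{\mathrm{CY}_{n}}=\ko_{\mathrm{CY}_{n}}$ and the Serre functor of $\Db(\mathrm{CY}_{n})$ is $[2n]$, so Grothendieck--Verdier duality gives $\tilde L\cong\bigl((-)\otimes\omega_{X}\bigr)[2n-2]\circ\tilde R$; hence, once $\tilde R\tilde F$ is known, the remaining spherical relations become formal identities among shifts and the order-two autoequivalence $(-)\otimes\omega_{X}$.

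The heart of the argument is the identification
\[
\tilde R\tilde F\ \cong\ \id_{\Db(X)}\ \oplus\ H,\qquad H\ \cong\ \bigl((-)\otimes\omega_{X}\bigr)[2-2n],
\]
with the unit $\id_{\Db(X)}\to\tilde R\tilde F$ realising the first summand. I would establish this using that $q(X)=p_{g}(X)=0$, which makes $F$ fully faithful (the embedding statement of the paper), together with the explicit description of the ideal-sheaf kernel along the universal subscheme in the spirit of Addington's K3 computation and Krug's treatment of an arbitrary surface: the extension classes that for a K3 surface build up the $n-1$ extra summands of a $\IP^{n-1}$-structure all factor through $\omega_{X}$, and on the double cover they recombine into the single summand $H$. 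Granting this, $H$ is an autoequivalence, so the cotwist $C_{\tilde F}=\cone(\id\to\tilde R\tilde F)[-1]\cong H[-1]$ is an equivalence and the unit splits; by the standard criterion for split spherical functors (two of the four conditions implying the others, given compatibility of unit and counit) the twist $\tilde T=\cone(\tilde F\tilde R\to\id_{\Db(\mathrm{CY}_{n})})$ is then an equivalence --- alternatively one verifies directly that $\tilde T$ is invertible on the pieces of a semiorthogonal decomposition of $\Db(\mathrm{CY}_{n})$ adapted to $\im\tilde F$. This yields split sphericity for every $n\ge2$, and I expect the displayed computation of $\tilde R\tilde F$, which means genuinely tracking how the ideal-sheaf kernel meets $\omega_{X^{[n]}}$ along the universal family, to be the main obstacle; everything else is bookkeeping.

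Equivariance of $\tilde T$ is built into the construction: $\pi\circ\iota=\pi$ forces $\iota^{*}\tilde F\cong\tilde F$ and $\tilde R\iota^{*}\cong\tilde R$, hence $\iota^{*}(\tilde F\tilde R)\iota^{*}\cong\tilde F\tilde R$ compatibly with the counit, and applying $\cone(-\to\id)$ gives $\iota^{*}\tilde T\iota^{*}\cong\tilde T$. Checking that this isomorphism satisfies the cocycle condition for the $\IZ/2$-action (automatic here, up to a sign one verifies) allows $\tilde T$ to descend along $\pi$ to an autoequivalence of $\Db(X^{[n]})$.

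Finally, for the last assertion. On $\K$-theory $\id-[\tilde T]=[\tilde F]\circ[\tilde R]$ as endomorphisms of $\K_{0}(\mathrm{CY}_{n})\otimes\IQ$; since $H$ is an even shift of tensoring by the torsion line bundle $\omega_{X}$, it acts as the identity on $\K_{0}(X)\otimes\IQ$, so $[\tilde R\tilde F]=2\cdot\id$. Hence $[\tilde F]$ is injective and $[\tilde R]$ surjective over $\IQ$, and $\im(\id-[\tilde T])=\im[\tilde F]$ has rank equal to $\rk\K_{0}(X)>1$ (in fact $12$ for an Enriques surface), whereas a twist $T_{E}$ around a spherical object satisfies $(\id-[T_{E}])(v)=\chi(E,v)\,[E]$ and so has rank $\le1$; thus $\tilde T$ is not a spherical twist. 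And $\tilde T$ is not standard, because standard autoequivalences send coherent sheaves to shifts of coherent sheaves, while a direct computation of $\tilde T$ on the structure sheaf of a point $x\in\mathrm{CY}_{n}$ --- where $\tilde R(\ko_{x})=R_{F}(\ko_{\pi(x)})\ne0$, so $\tilde T(\ko_{x})=\cone\bigl(\tilde F\tilde R(\ko_{x})\to\ko_{x}\bigr)$ has cohomology in more than one degree --- shows that it does not. (Alternatively one argues cohomologically: the Hodge realisation of a standard autoequivalence is a lattice isometry of a prescribed shape, incompatible with the reflection-type action of $\tilde T$ across the rank-$12$ sublattice $\im[\tilde F]$.)
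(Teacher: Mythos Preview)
Your overall strategy for split sphericity and equivariance matches the paper's, but one step is muddled and one has a genuine gap.

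For the monad computation: the formula $\tilde R\tilde F\cong\id\oplus H$ with $H=\mathrm{M}_{\omega_X}[2-2n]$ is correct, but your justification---that ``the extension classes that for a K3 surface build up the $n-1$ extra summands \dots\ recombine into the single summand $H$''---is wrong. On an Enriques surface $RF\cong\id$ full stop (this is the paper's Theorem~\ref{hilb-semiorth}); there are no leftover summands to recombine. The second piece $R\,\mathrm{M}_{\omega_{X^{[n]}}}F$ follows in one line from full faithfulness of $F$ and the relation $R=\ks_X\, L\,\ks_{X^{[n]}}^{-1}$ between adjoints: $R\,\mathrm{M}_{\omega_{X^{[n]}}}F\cong\mathrm{M}_{\omega_X}\,LF\,[2-2n]\cong\mathrm{M}_{\omega_X}[2-2n]$. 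This is precisely the content of the paper's Proposition~\ref{sphericaldeg2cover} with Lemma~\ref{serreadmissible}, stated abstractly for any admissible subcategory of a variety with $2$-torsion canonical bundle; no Addington-style kernel analysis is needed here.

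For ``not standard'': your assertion that $\tilde T(\ko_x)$ has cohomology in more than one degree is not proved---merely knowing $\tilde R(\ko_x)\neq 0$ does not suffice, since the cone of a map between sheaves can perfectly well be a shifted sheaf. The paper instead computes (Lemma~\ref{rank}) that $FR(k(\xi))$ has rank $\chi(\ko_X)-2n=1-2n\neq 0$, hence full support on $X^{[n]}$; thus $\tilde T(\ko_x)$ is supported on all of $\mathrm{CY}_n$ and cannot be a shifted skyscraper, so $\tilde T$ is not standard.

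Your $K$-theory argument for ``not a spherical twist'' is correct and genuinely different from the paper's. The paper uses the relation $\tilde T\tilde F\cong\tilde F\,\mathrm{M}_{\omega_X}[3-2n]$ together with Proposition~\ref{prop:Addingtongeneralised} to show that $\tilde T\cong\mathrm{ST}_A$ would force $\tilde F(\Db(X))$ to be orthogonal to the spanning class $A\cup A^\perp$, hence $\tilde F=0$. Your rank comparison, namely $\rk\im(\id-[\tilde T])=\rk\im[\tilde F]=12$ versus $\rk\im(\id-[\mathrm{ST}_A])\le 1$, is more elementary and avoids that machinery; on the other hand the paper's method also rules out shifted or twisted spherical twists in one stroke and feeds into the finer comparisons of Lemma~\ref{notwist}.
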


Under some conditions we can also compare our twist $\tilde{T}$ to the autoequivalences constructed in \cite{Ploog-Sosna}; see Proposition \ref{comparisontoboxes}.

Once we establish Theorem \ref{hilb-semiorth} below, the first part of the theorem is an incarnation of the following general principle, see Proposition \ref{sphericaldeg2cover}:

If $Y$ is a smooth projective variety whose canonical bundle is of order $2$, $\tilde{Y}$ its canonical cover with quotient map $\pi\colon \tilde{Y}\to Y$, and $\ka$ an admissible subcategory of $\Db(Y)$ with embedding functor $i\colon \ka\to \Db(Y)$, then $\pi^*i$ is a split spherical functor and the associated twist is equivariant. 

The following result might be of independent interest.

\begin{thm}\label{hilb-semiorth}
If $S$ is any surface with $p_g=q=0$, then the FM-transform $F\colon \Db(S)\to \Db(S^{[n]})$ whose kernel is the ideal sheaf of the universal family, is fully faithful, hence $\Db(S)$ is an admissible subcategory of $\Db(S^{[n]})$.
\end{thm}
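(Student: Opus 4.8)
The goal is to show that $F\colon \Db(S)\to\Db(S^{[n]})$ with kernel the universal ideal sheaf $\mathcal I_\Xi$ is fully faithful when $p_g=q=0$. The standard tool is Orlov's criterion via point-like objects, or equivalently a direct computation of $\Hom$-spaces; but the cleanest route is to exploit the comparison between the Hilbert scheme and the symmetric quotient stack.

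Key steps:

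1. Recall the Bridgeland–King–Reid / Haiman equivalence $\Db(S^{[n]}) \cong \Db_{\mathfrak S_n}(S^n)$, so that $F$ corresponds to an equivariant FM-transform $\Phi\colon \Db(S) \to \Db_{\mathfrak S_n}(S^n)$.

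2. Identify the kernel of $\Phi$ explicitly. The universal ideal sheaf $\mathcal I_\Xi$ pulls back (via the BKR/Haiman correspondence) to something built from the structure sheaf of the "big diagonal" in $S \times S^n$; more precisely, $F(\mathcal O_x)$ for a point $x$ should be the structure sheaf of the reduced fiber, and $F$ itself factors through $\mathcal O_S \to$ (the relevant equivariant sheaf).

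3. Compute $\Hom^*_{S^{[n]}}(F(A), F(B))$ for $A,B \in \Db(S)$. By adjunction this is $\Hom^*_S(A, G \circ F (B))$ where $G$ is right adjoint to $F$; so it suffices to show $G \circ F \cong \id_{\Db(S)}$, i.e. the comonad/monad $GF$ is trivial.

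4. The composition $GF$ has kernel given by a convolution of the universal ideal sheaf with its (shifted) dual. Using the short exact sequence $0 \to \mathcal I_\Xi \to \mathcal O_{S\times S^{[n]}} \to \mathcal O_\Xi \to 0$, reduce the computation for $\mathcal I_\Xi$ to the (easier) computations for $\mathcal O_{S \times S^{[n]}}$ and $\mathcal O_\Xi$.

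5. The $\mathcal O_{S \times S^{[n]}}$-term contributes $\mathrm{R}\Gamma(S^{[n]}, \mathcal O_{S^{[n]}}) \otimes (\text{something})$; since $S^{[n]}$ has the same Hodge numbers in degree $0$ (as $h^{0,0}=1$, $h^{1,0}=h^{2,0}=0$ for $S$, one gets $\mathrm{R}\Gamma(\mathcal O_{S^{[n]}}) = \mathbb C$ in degree $0$ only — here is where $p_g = q = 0$ enters decisively). The $\mathcal O_\Xi$-term is a computation on the universal family, which is a $\mathbb P^{n-1}$-bundle-like object over... actually $\Xi$ is finite flat of degree $n$ over $S^{[n]}$, and the relevant pushforwards can be controlled.

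6. Assemble: show all the "error terms" that would obstruct $GF \cong \id$ vanish precisely because $H^1(S,\mathcal O_S) = H^2(S,\mathcal O_S) = 0$, concluding that $F$ is fully faithful. Fully faithful FM-functors automatically have image an admissible subcategory (the adjoints exist), giving the semiorthogonal complement.

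**Main obstacle.** The crux is Step 4–5: controlling the convolution $\mathcal I_\Xi^\vee \star \mathcal I_\Xi$ (or the monad $GF$) on the Hilbert scheme. The universal subscheme $\Xi \subset S \times S^{[n]}$ is singular and its structure sheaf's higher direct images are not entirely trivial; one must understand $\mathrm{R}\pi_{S^{[n]},*}(\mathcal O_\Xi \otimes \text{stuff})$ well enough. The anticipated resolution: pass to $\Db_{\mathfrak S_n}(S^n)$ where $\Xi$ becomes a union of partial diagonals and everything reduces to representation-theoretic bookkeeping over $\mathfrak S_n$-invariants, together with the Künneth formula on $S^n$ and the vanishing $H^{>0}(S,\mathcal O_S)=0$. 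In the equivariant picture, the whole computation boils down to the fact that the invariant part of $H^*(\mathcal O_{S^n}) = H^*(\mathcal O_S)^{\otimes n}$ is just $\mathbb C$, which is exactly the $p_g=q=0$ hypothesis. A secondary (more technical) point is making the kernel of $GF$ explicit enough to apply these vanishings — I would expect this to be the longest part of the actual proof, likely handled via the derived McKay correspondence and an analysis analogous to (but simpler than) the one in Krug's earlier work on the $\mathbb P^{n-1}$-functor $\Db(S)\to\Db(S^{[n]})$.
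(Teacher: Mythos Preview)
Your plan is essentially the paper's approach: the paper decomposes $F$ via the triangle $F\to F'\to F''$ coming from the structure sequence of $\Xi$, computes the four compositions $R'F'$, $R'F''$, $R''F'$, $R''F''$ (citing Meachan for the pieces, with the key input being $\Ho^*(\ko_{S^{[n]}})\cong\IC$ from $p_g=q=0$), and then reads off $RF\cong\id$ from the resulting lattice of triangles. The equivariant route via BKR/Haiman that you emphasise is also carried out in the paper's final section (the ``truncated universal ideal functor''), where exactly the same computation is redone in $\Db_{\sym_n}(S^n)$ and noted to reprove the theorem; so both of your suggested routes appear in the paper.
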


Since there are many semi-orthogonal decompositions of $\Db(X^{[n]})$, we have many, potentially non-standard, twists associated with them.

The paper is organised as follows. In Section 2 we present some background information, before proving our main results in Section 3. In Section \ref{exceptionalsequences} we give a general construction of exceptional sequences on
the Hilbert scheme $S^{[n]}$ out of exceptional sequences on a surface $S$. This construction has been independently considered by Evgeny Shinder. In particular, we have the following result which is probably well-known to experts.

\begin{proposition}
If $S$ is a surface having a full exceptional collection, then the same holds for $S^{[n]}$.
\end{proposition}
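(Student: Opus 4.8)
The plan is to build a full exceptional collection on $S^{[n]}$ out of one on $S$ by combining two ingredients: the Haiman-style description of $\Db(S^{[n]})$ in terms of the $\sym_n$-equivariant derived category of $S^n$, and the functor $F$ of Theorem \ref{hilb-semiorth} together with its variants $F\circ(-\otimes L)$. More precisely, let $E_1,\dots,E_m$ be a full exceptional collection on $S$. By the forthcoming construction of Section \ref{exceptionalsequences}, one produces from it exceptional objects on $S^{[n]}$ of the form obtained by applying $F$ (and its twisted cousins) to the $E_i$ and, more generally, by symmetrizing external products $E_{i_1}\boxtimes\cdots\boxtimes E_{i_n}$ over the symmetric group. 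The first task is therefore to recall that construction and its proof that the resulting objects form an exceptional \emph{sequence} on $S^{[n]}$; I would cite it as a black box here since the proposition is being advertised as a consequence.

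The real content is then a counting/generation argument. First I would compute the expected length: if $\Db(S)$ has a full exceptional collection of length $m = \rk K_0(S) = \chi(S,\reg_S)\cdot(\text{something})$, then $K_0(S^{[n]})$ has rank equal to the number of multisets of size $n$ from $\{1,\dots,m\}$ together with the contributions of the ``diagonal'' strata, which by the Göttsche-type formula for the Hodge numbers (equivalently, by the fact that $\Db(S^{[n]})\cong \Db_{\sym_n}(S^n)$ and Schur--Weyl-type decompositions) is exactly the number $\dim H^*(S^{[n]})$; one checks this matches the number of exceptional objects produced. So the numerics will be forced to work out, and the only thing to verify is \emph{generation}: that the constructed exceptional objects generate $\Db(S^{[n]})$ as a triangulated category.

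For generation I would pass to the equivariant picture $\Db(S^{[n]})\cong \Db_{\sym_n}(S^n)$ (valid since $p_g=q=0$ guarantees the relevant derived McKay/BKR equivalence and, more to the point, lets us avoid subtleties — this is where the hypothesis $p_g=q=0$ or at least $S$ having a full exceptional collection, hence trivial or generated-by-exceptionals $\Db(S)$, is used). On $\Db_{\sym_n}(S^n)$ there is a standard full exceptional collection built from the $E_i^{\boxtimes}$'s twisted by irreducible $\sym_n$-representations and by the box-products over set partitions; generation of $\Db_{\sym_n}(S^n)$ by such objects follows from generation of $\Db(S^n)$ by the $E_{i_1}\boxtimes\cdots\boxtimes E_{i_n}$ (a Künneth/Bondal--Orlov argument) combined with the general fact that $\Db_G(-)$ is generated by $G$-equivariant lifts of a generating set twisted by $\Irr(G)$. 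The main obstacle is the bookkeeping of the comparison: matching the objects that naturally arise on the Hilbert scheme side (via $F$ and the Section \ref{exceptionalsequences} construction) with the manifestly generating objects on the equivariant side, and checking the Ext-vanishings survive the identification in the correct order. I expect this to be routine but notationally heavy, and I would either invoke the Section \ref{exceptionalsequences} results wholesale or, if a self-contained argument is wanted, set up the equivariant collection directly and transport it through the BKR equivalence, remarking that the ordering can be taken compatibly with a lexicographic order on multi-indices.
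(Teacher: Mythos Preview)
Your proposal conflates two unrelated constructions. The functor $F=\FM_{\ki_{\kz_n}}$ of Theorem \ref{hilb-semiorth} plays \emph{no role} in the proof of this proposition. Applying $F$ (and twists $F\circ \mathrm{M}_L$) to the $E_i$ gives only a length-$m$ exceptional sequence inside $\Db(S^{[n]})$, which is very far from full; there is no mechanism by which such images would generate. The objects produced in Section \ref{exceptionalsequences} are not of the form $F(E_i)$ at all: they are the inflations $\ke(\alpha,V)=\Inf_{\Stab(\alpha)}^{\sym_n}\bigl(E(\alpha)\otimes V\bigr)\in \Db_{\sym_n}(S^n)$, transported to $\Db(S^{[n]})$ via BKR. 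Your suggestion to ``cite Section \ref{exceptionalsequences} as a black box'' is circular, since Section \ref{exceptionalsequences} \emph{is} the proof of the proposition announced in the introduction.

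The actual argument is the one you mention only as a fallback in your last sentence, and it requires no separate counting or generation check. One takes the full exceptional sequence $(E(\alpha))_{\alpha\in[k]^n}$ on $\Db(S^n)$ (Proposition \ref{Sam}), reorders it so that $\sym_n$ permutes the members in blocks (this is the order $\lhd$ and Lemma \ref{combi}), and then applies Elagin's theorem (Theorem \ref{Ela2}), which directly produces a \emph{full} exceptional sequence in $\Db_{\sym_n}(S^n)$ from a full $G$-block-permuted one on $S^n$. Fullness is part of Elagin's statement; the K-theory and G\"ottsche numerics you sketch are unnecessary. The only technical content beyond citing Elagin is the combinatorial Lemma \ref{combi} showing that the reordered sequence is still exceptional. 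Finally, BKR holds for every smooth projective surface, so the aside about needing $p_g=q=0$ for the equivalence is not right (though of course a surface with a full exceptional collection automatically satisfies $p_g=q=0$).
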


In the last section we describe what we call truncated ideal functors which provide us with 
a further example of a fully faithful functor $\Db(X)\to \Db(X^{[n]})$ for $X$ an Enriques surface, and, in some cases, $\IP^n$-functors on smooth Deligne--Mumford stacks. The last section also gives some background on the proof of Proposition \ref{rfidentity}, the main ingredient in the proof of Theorem \ref{hilb-semiorth}.   
\smallskip

\noindent
\textbf{Conventions.} We will work over the complex numbers and all functors are assumed to be derived. We will write $\kh^i(E)$ for the $i$-th cohomology object of a complex $E\in \Db(Z)$ and $\Ho^*(E)$ for the complex $\oplus_i \Ho^i(Z,E)[-i]$. If $F$ is a functor, its right adjoint will be denoted by $F^R$ and the left adjoint by $F^L$.
\smallskip

\noindent
\textbf{Acknowledgements.} We thank Ciaran Meachan for comments. A.K.\ was supported by the SFB/TR 45 of the DFG (German Research Foundation). P.S.\ was partially financially supported by the RTG 1670 of the DFG.

\section{Preliminaries}\label{section:preliminaries}

\subsection{Hilbert schemes of surfaces with $p_g=q=0$}
Let $S$ be a surface with $p_g=q=0$ and consider $S^{[n]}$, the Hilbert scheme of $n$ points on $S$. Then we have $\Ho^k(S^{[n]},\ko_{S^{[n]}})=0$ for all $k>0$, compare \cite{Oguiso-Schroer}. 
Indeed, by Künneth formula $\Ho^*(S^n,\reg_{S^n})=\Ho^*(S,\reg_S)^{\otimes n}$ is concentrated in degree zero. 
As a consequence, the structure sheaf of the $n$-th symmetric product has no higher cohomology, and the same then also holds for $S^{[n]}$, because the symmetric product has rational singularities.

For example, we can consider an \emph{Enriques surface}, which is a smooth projective surface $X$ with $p_g=q=0$ such that the canonical bundle $\omega_X$ is of order $2$.

\subsection{Canonical covers} 
Let $Y$ be a variety with torsion canonical bundle of (minimal) order $k$. The \emph{canonical cover} $\tilde{Y}$ of $Y$ is the unique (up to isomorphism) variety with trivial canonical bundle and an \'etale morphism $\pi\colon \tilde{Y} \to Y$ of degree $k$ such that
 $\pi_*\ko_{\tilde{Y}}=\bigoplus_{i=0}^{k-1}\omega_Y^i$.
In this case, there is a free action of the cyclic group $\IZ/k\IZ$ on $\tilde{Y}$ such that $\pi$ is the quotient morphism.

As an example, the canonical cover of an Enriques surface $X$ is a K3 surface $\tilde{X}$ and $X$ is the quotient of $\tilde{X}$ by the action of a fixed point free involution. 

Furthermore, the canonical bundle of $X^{[n]}$ has order $2$ and the associated canonical cover, denoted here by $\mathrm{CY}_n$, is a Calabi--Yau variety; see \cite[Prop.\ 1.6]{Nieper} or \cite[Thm.\ 3.1]{Oguiso-Schroer}. 

\subsection{Fourier--Mukai transforms and kernels}\label{fmtransforms}
Recall that given an object $\ke$ in $\Db(Z\times Z')$, where $Z$ and $Z'$ are smooth and projective, we get an exact functor $\Db(Z)\to \Db(Z')$, $\alpha\mapsto (p_{Z'})_*(\ke\otimes p_Z^*\alpha)$. Such a functor, denoted by $\mathrm{FM}_\ke$, is called a \emph{Fourier--Mukai transform} (or FM-transform) and $\ke$ is its kernel. See \cite{Huybrechts} for a concise introduction to FM-transforms. For example, $\mathrm{FM}_{\Delta_*\kl}(\alpha)=\alpha\otimes\kl$, where $\Delta\colon Z\to Z\times Z$ for the diagonal map and $\kl\in \Pic(Z)$. In particular, $\mathrm{FM}_{\ko_\Delta}$ is the identity functor.
\smallskip

\noindent
{\textbf{Convention.}} We will write $\mathrm{M}_\kl$ for the functor $\mathrm{FM}_{\Delta_*\kl}$. 
\smallskip

Let $S$ be any smooth projective surface, $\kz_n\subset S\times S^{[n]}$ be the universal family and consider its structure sequence
\begin{align*}\xymatrix{0\ar[r] & \ki_{\kz_n}\ar[r] & \ko_{S\times S^{[n]}} \ar[r] & \ko_{\kz_n}\ar[r] & 0.}\end{align*}
Using the objects from the above sequence as kernels, we get a triangle $F\to F'\to F''$ of functors $\Db(S)\to \Db(S^{[n]})$. 
Since all these functors are FM-transforms, they have left and right adjoints; see \cite[Prop.\ 5.9]{Huybrechts}.

\subsection{Equivalences of canonical covers}\label{ccequi}
The relation between autoequivalences of a variety $Y$ with torsion canonical bundle and those of the canonical cover $\tilde{Y}$ was studied in \cite{Bridgeland-Maciocia}. We recall some facts in the special case where the order of $\omega_Y$ is $2$.

An autoequivalence $\tilde{\varphi}$ of $\Db(\tilde{Y})$ is \emph{equivariant} under the conjugation action of $G=\IZ/2\IZ$ on $\Aut(\Db(\tilde{Y}))$ if there is a group automorphism $\mu\in \Aut(G)$ such that $g_*\tilde{\varphi}\cong\tilde{\varphi} \mu(g)_*$ for all $g\in G$. Of course, in our case, $\mu=\id$.

By \cite[Sect.\ 4]{Bridgeland-Maciocia}, an equivariant functor $\tilde{\varphi}$ descends to a functor $\varphi\in\Aut(\Db(Y))$ with functor isomorphisms
  $\pi_*\tilde{\varphi} \cong \varphi \pi_*$ and
  $\pi^* \varphi \cong \tilde{\varphi}\pi^*$;
moreover, two descents $\varphi$, $\varphi'$ of $\tilde{\varphi}$ are unique up to a line bundle twist with a power of $\omega_Y$. 

In the other direction, it is also shown in \cite[Sect.\ 4]{Bridgeland-Maciocia} that every autoequivalence of $\Aut(\Db(Y))$ has an equivariant lift. Two lifts differ up to the action of $G$ in $\Aut(\Db(\tilde{Y}))$. 

\subsection{Spherical functors}\label{sphericalpn}

Now consider two triangulated categories $\ka$ and $\kb$ and any exact functor $F\colon \ka \to\kb$ with left and right adjoints $F^L, F^R\colon\kb\to\ka$. Define the \emph{twist} $T$ to be the cone on the counit $\epsilon\colon FF^R\to \id_\kb$ of the adjunction and the \emph{cotwist} $C$ to be the cone on the unit $\eta\colon \id_\ka\to F^RF$. 

\begin{remark}\label{dglifts1}
Of course, one needs to make sure that the above cones actually exist. If one works with Fourier--Mukai-transforms, this is not a problem, because the maps between the functors come from the underlying kernels and everything works out, even for (reasonable) schemes which are not necessarily smooth and projective; see \cite{Anno-Logvinenko}. More generally, everything works out if one uses an appropriate notion of a spherical DG-functor; see \cite{Anno-Logvinenko2}. 
\end{remark}

So, as we will see, in the cases of interest to us, we have triangles $FF^R\to \id_\kb\to T$ and $\id_\ka\to F^RF\to C$. Following \cite{Anno-Logvinenko2}, we call $F$ \emph{spherical} if $C$ is an equivalence and $F^R\cong C F^L$. If $\ka$ and $\kb$ admit Serre functors $\ks_\ka$ and $\ks_\kb$, the last condition is equivalent to $\ks_\kb F C\cong F\ks_\ka$. If $F$ is a spherical functor, then $T$ is an equivalence. If the triangle $\id_\ka\to F^RF\to C$ splits, we call $F$ \emph{split spherical}. 

For an example of a (split) spherical functor consider a $d$-dimensional variety $Z$ and a \emph{spherical object} $E\in \Db(Z)$, that is, $E\otimes \omega_Z\cong E$ and $\Hom^*(E,E)\cong \IC\oplus\IC[-d]$. The functor 
\[F=-\otimes E\colon \Db(\Spec(\IC))\to \Db(Z)\]
is then spherical and the associated autoequivalence of $\Db(Z)$ is the spherical twist from the introduction, denoted by $\mathrm{ST}_E$ in the following.

\subsection{$\IP^n$-functors} 
Following \cite{Addington}, a \emph{$\IP^n$-functor} is a functor $F\colon \ka\to \kb$ of triangulated categories such that
\begin{enumerate}
 \item There is an autoequivalence $H_F=H$ of $\ka$ such that 
\[F^RF\simeq \id\oplus H\oplus H^2\oplus\dots \oplus H^n.\]
\item The map 
\[HF^RF\hookrightarrow F^RFF^RF\xrightarrow{F^R\epsilon F} F^RF,\]
with $\epsilon$ being the counit of the adjunction is, when written in the components
\begin{align*}H\oplus H^2\oplus\dots\oplus H^n\oplus H^{n+1}\to \id\oplus H\oplus H^2\oplus\dots\oplus H^n,\end{align*}
of the form
\begin{align}\label{monadmatrix}\begin{pmatrix}
  * & * &\cdots &*&*\\
1&*&\cdots&*&*\\
0&1&\cdots&*&*\\
\vdots&\vdots&\ddots&\vdots&\vdots\\
0&0&\cdots&1&* 
  \end{pmatrix}.
  \end{align}
\item $F^R\simeq H^nF^L$. If $\ka$ and $\kb$ have Serre functors, this is equivalent to $S_\kb FH^n\simeq FS_\ka$.
\end{enumerate}

If $F$ is a $\IP^n$-functor, then there is also an associated autoequivalence of $\kb$, denoted by $P_F=P$. A $\IP^1$-functor is precisely a split spherical functor and for the associated equivalences we have $T^2\cong P$. 
If $\tilde{X}$ is a K3 surface, the functor $F=\mathrm{FM}_{\ki_{\kz_n}}$ defined in Subsection \ref{fmtransforms} is a $\IP^{n-1}$-functor; see \cite{Addington}.

\subsection{Values of autoequivalences}
Let $Z,Z'$ be smooth projective varieties. Recall that an exact functor $F\colon \Db(Z)\to \Db(Z')$ is said to have \emph{cohomological amplitude} $[a,b]$ if for every complex $E\in \Db(Z)$ whose cohomology is concentrated in degrees between $p$ and $q$, the cohomology of $F(E)$ is concentrated in degrees between $p-a$ and $q+b$.

We will need the following slight generalisation of a Proposition in {\cite[Sect.\ 1.4]{Addington}}. The case that $\Phi_1=\Phi_2=\id$ could be generally useful when comparing twists along spherical and $\IP^n$-functors $F$ whose cotwist is not simply a shift but a standard autoequivalence. 

\begin{proposition}\label{prop:Addingtongeneralised}
Let $T\in\Aut(\Db(Z'))$ and $F\colon\Db(Z)\to \Db(Z')$ be a Fourier--Mukai transform. Furthermore, let $\Psi\in \Pic(Z)\rtimes\Aut(Z)\subset\Aut^{\mathrm{st}}(\Db(Z))$ be a non-shifted standard autoequivalence and let $\Phi_1,\Phi_2\in \{\id,\mathrm{M}_{\omega_{Z'}}\}\subset\Aut(\Db(Z'))$. 
If $\alpha\in \Db(Z')$ is such that $T(\alpha)\cong\Phi_1(\alpha)[i]$ and there is an isomorphism of functors $T F\cong\Phi_2 F \Psi[j]$ with $i\neq j\in \mathbb{Z}$, then $F(\beta)\in \alpha^\bot$ and $\alpha \in F(\beta)^\bot$ for every $\beta\in \Db(Z)$.
\end{proposition}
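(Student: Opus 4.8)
The plan is to mimic the argument in \cite[Sect.\ 1.4]{Addington}, extracting from the two displayed isomorphisms a contradiction on the level of graded $\Hom$-spaces unless the relevant $\Hom$-groups vanish. Fix $\beta\in\Db(Z)$. The key observation is that $T$ is an equivalence, so applying $T$ to a morphism space is harmless, and that both $\Phi_1,\Phi_2$ lie in the finite group $\{\id,\mathrm{M}_{\omega_{Z'}}\}$, which is in particular a group of \emph{non-shifted} standard autoequivalences; similarly $\Psi$ is non-shifted.

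First I would compute, for each $k\in\IZ$,
\[
\Hom_{\Db(Z')}\bigl(\alpha, F(\beta)[k]\bigr)\cong \Hom_{\Db(Z')}\bigl(T\alpha, TF(\beta)[k]\bigr)\cong \Hom_{\Db(Z')}\bigl(\Phi_1(\alpha)[i],\ \Phi_2 F \Psi(\beta)[j+k]\bigr).
\]
Now I would like to move $\Phi_1$ past the $\Hom$: since $\Phi_1$ is an equivalence, $\Hom(\Phi_1\alpha, -)\cong\Hom(\alpha,\Phi_1^{-1}(-))$, and because $\Phi_1^{-1}\in\{\id,\mathrm{M}_{\omega_{Z'}}\}$ as well, the right-hand side becomes $\Hom\bigl(\alpha,\ \Phi_1^{-1}\Phi_2 F\Psi(\beta)[j-i+k]\bigr)$. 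Write $\Phi:=\Phi_1^{-1}\Phi_2\in\{\id,\mathrm{M}_{\omega_{Z'}}\}$ and $\gamma:=\Psi(\beta)\in\Db(Z)$, so that
\[
\Hom_{\Db(Z')}\bigl(\alpha, F(\beta)[k]\bigr)\cong \Hom_{\Db(Z')}\bigl(\alpha,\ \Phi F(\gamma)[j-i+k]\bigr).
\]
On the other hand, I would run the identical manipulation starting one step later, i.e.\ apply $T$ a second time. Using $T(\alpha)\cong\Phi_1(\alpha)[i]$ again on the left factor and $TF\cong\Phi_2 F\Psi[j]$ on the right, one gets a recursion relating $\Hom(\alpha, \Phi^{m}F(\Psi^m\beta)[k])$ to $\Hom(\alpha,\Phi^{m+1}F(\Psi^{m+1}\beta)[j-i+k])$ for all $m\geq 0$; since $\Phi^2=\id$ and $j\neq i$, iterating shifts the degree by a nonzero amount $j-i$ at each step while only changing the second argument by the bounded-amplitude operation $\Psi$ (which, being a non-shifted standard autoequivalence, has cohomological amplitude $[0,0]$) and by $\Phi$ (likewise amplitude $[0,0]$). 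Because $\alpha$ and $F(\gamma)$ are bounded complexes and $\Phi,\Psi$ do not change this boundedness, only finitely many of the graded pieces $\Hom(\alpha,\Phi^m F(\Psi^m\beta)[\ell])$ can be nonzero; but the recursion forces $\Hom(\alpha,F(\beta)[k])$ to equal one of the pieces arbitrarily far out in the degree direction, hence it vanishes for every $k$. This gives $F(\beta)\in\alpha^\bot$. The statement $\alpha\in F(\beta)^\bot$ follows by the symmetric argument: apply $T^{-1}$ instead of $T$, using $T^{-1}(\alpha)\cong\Phi_1^{-1}(\alpha)[-i]$ and $T^{-1}F\cong \Phi_2^{-1}F\Psi^{-1}[-j]$, which are again of the same shape with $-i\neq -j$, and repeat.

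The main obstacle I anticipate is bookkeeping rather than conceptual: one must make sure that the auxiliary autoequivalences $\Phi_1,\Phi_2$ and $\Psi$ genuinely have cohomological amplitude $[0,0]$ (true since $\mathrm{M}_{\omega_{Z'}}$, automorphisms and line-bundle twists are exact on sheaves), and that conjugating the hypothesis $TF\cong\Phi_2F\Psi[j]$ by $T$ repeatedly produces isomorphisms of the \emph{same} form, so that the recursion closes after at most two steps (using $\Phi^2=\id$). One should also be slightly careful that $\Phi$ commutes with $F$ only up to replacing $F$ by $\Phi F$ — but since we only ever need to transport $\Phi$ across a $\Hom$, not across $F$, this causes no trouble. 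With these points checked, the vanishing is immediate from boundedness and $i\neq j$, exactly as in Addington's original case.
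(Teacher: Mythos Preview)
Your argument is correct and follows the same line as the paper's: iterate $T$ to produce an isomorphism
\[
\Hom(\alpha,F(\beta)[k])\cong \Hom\bigl(\alpha,\Phi^m F(\Psi^m\beta)[m(j-i)+k]\bigr),
\]
then use that the right-hand side lives in a uniformly bounded range of degrees while $m(j-i)+k$ escapes to infinity. The paper does the same computation but passes to the adjoint, rewriting the Hom as $\Hom\bigl(\Psi^{-m}F^L\Phi_2^{-m}\Phi_1^m(\alpha),\beta[(j-i)m+k]\bigr)$ and invoking \cite[Prop.\ 2.5]{Kuz} for the finite cohomological amplitude of $F^L$.

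One point in your write-up should be tightened: the uniform bound on the cohomology of $\Phi^m F(\Psi^m\beta)$ requires the finite cohomological amplitude of $F$ itself, not just of $\Phi$ and $\Psi$ --- your sentence ``$F(\gamma)$ is bounded and $\Phi,\Psi$ do not change this boundedness'' elides this, since $\Psi$ is applied to $\beta$ \emph{before} $F$, not after. Once you state that $F$ (as an FM transform between smooth projective varieties) has finite amplitude, the uniformity is immediate and your argument goes through. The remark about the recursion ``closing after two steps via $\Phi^2=\id$'' is harmless but unnecessary: uniform boundedness plus unbounded shift is all that is used.
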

\begin{proof}
We have $T^m(\alpha)\cong\Phi_1^m(\alpha)[mi]$ and $T^mF(\beta)\cong\Phi_2^mF\Psi^m(\beta)[mj]$ for all $m\in \IZ$, since $\mathrm{M}_{\omega_Z}$ commutes with every autoequivalence. Hence,
\begin{align*}
\Hom\bigl(\alpha,F(\beta)[k]\bigr)&\cong\Hom\bigl(T^m(\alpha),T^mF(\beta)[k]\bigr)\\&\cong\Hom\bigl(\Phi_1^m(\alpha)[mi],\Phi_2^mF\Psi^m(\beta)[mj+k]\bigr)
\\&\cong\Hom\bigl(\Psi^{-m}F^L\Phi_2^{-m}\Phi_1^m(\alpha),\beta[(j-i)m+k]\bigr). 
\end{align*}
for every $k\in \IZ$. This vanishes for $m\gg0$ since $\Phi_1$, $\Phi_2$, and $\Psi$ have cohomological amplitude $[0,0]$ and $F^L$ has finite cohomological amplitude by \cite[Prop.\ 2.5]{Kuz}. The proof that $\alpha\in F(\beta)^\bot$ is analogous.
\end{proof}

\subsection{Semi-orthogonal decompositions} References for the following facts are, for example, \cite{Bon} and \cite{Bondal-Orlov2}. 

Let $\kt$ be a triangulated category. A \emph{semi-orthogonal decomposition} of $\kt$ is a sequence of strictly full triangulated subcategories $\ka_1,\ldots,\ka_m$ such that (a) if $A_i\in \mathcal{A}_i$ and $A_j\in \mathcal{A}_j$, then $\text{Hom}(A_i,A_j[l])=0$ for $i>j$ and all $l$, and (b) the $\mathcal{A}_i$ generate $\kt$, that is, the smallest triangulated subcategory of $\kt$ containing all the $\mathcal{A}_i$ is already $\kt$. We write $\kt=\langle\ka_1,\ldots,\ka_m\rangle$. If $m=2$, these conditions boil down to the existence of a functorial exact triangle $A_2\to T\to A_1$ for any object $T\in \kt$.

A subcategory $\ka$ of $\kt$ is \emph{right admissible} if the embedding functor $i$ has a right adjoint $i^R$, and it is called \emph{left admissible} if $i$ has a left adjoint $i^L$. We say that $\ka$ is \emph{admissible} if both adjoints exist. Note that if $\kt$ admits a Serre functor, then the existence of one of the adjoints implies the existence of the other. 

Given any subcategory $\ka$, the category $\ka^\bot$ consists of objects $b$ such that $\Hom(a,b[k])=0$ for all $a\in \ka$ and all $k\in \IZ$. If $\ka$ is right admissible, then $\kt=\langle\ka^\bot,\ka\rangle$ is a semi-orthogonal decomposition. Similarly, $\kt=\langle\ka, {}^\bot\ka\rangle$ is a semi-orthogonal decomposition if $\ka$ is left admissible, where ${}^\bot\ka$ is defined in the obvious way.

Examples typically arise from so-called exceptional objects. Recall that an object $E\in \Db(Z)$ (or any $\IC$-linear triangulated category) is called \emph{exceptional} if $\Hom(E,E)=\IC$ and $\Hom(E,E[k])=0$ for all $k\neq 0$. The smallest triangulated subcategory containing $E$ is then equivalent to $\Db(\Spec(\IC))$ and this category, by abuse of notation again denoted by $E$, is admissible, leading to a semi-orthogonal decomposition $\Db(Z)=\langle E^\bot, E\rangle$. We call a sequence of exceptional objects $E_1,\ldots,E_n$ an \emph{exceptional collection} if $\Db(Z)=\langle (E_1,\ldots,E_n)^\bot,E_1,\ldots,E_n\rangle$, where $(E_1,\ldots,E_n)^\bot$ is the category of objects $F$ which satisfy $\Hom(E_i,F[k])=0$ for all $i,k$. The collection is called \emph{full} if $(E_1\ldots,E_n)^\bot=0$.

Note that any fully faithful FM-transform $i\colon\ka=\Db(Z')\to \Db(Z)$ gives a semi-orthogonal decomposition $\Db(Z)=\langle i(\ka)^\bot,i(\ka)\rangle$. 

We will need the following well-known and easy fact.

\begin{lemma}\label{serreadmissible}
If $\kt$ has a Serre functor $\ks_\kt$ and $\ka$ is an admissible subcategory, then $\ka$ has a Serre functor $\ks_\ka\cong i^R\ks_\kt i$.
\end{lemma}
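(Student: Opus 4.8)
The plan is to check directly that the functor $\ks_\ka:=i^R\ks_\kt i$ satisfies the defining natural isomorphism of a Serre functor on $\ka$. I will use two standard properties of the embedding $i\colon\ka\to\kt$: it is fully faithful, so that $\Hom_\kt(iA,iB)\cong\Hom_\ka(A,B)$ naturally in $A,B\in\ka$; and, since $\ka$ is admissible (in particular right admissible), $i$ has a right adjoint $i^R$, so that $\Hom_\kt(iB,C)\cong\Hom_\ka(B,i^RC)$ naturally in $B\in\ka$ and $C\in\kt$. Note that $i^R$ is exact, being a right adjoint of the exact functor $i$, and $\ks_\kt$ is exact, so $\ks_\ka$ is exact.

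First I would concatenate, for $A,B\in\ka$, the chain of natural isomorphisms
\begin{align*}
\Hom_\ka(A,B)&\cong\Hom_\kt(iA,iB)\cong\Hom_\kt(iB,\ks_\kt iA)^*\\
&\cong\Hom_\ka(B,i^R\ks_\kt iA)^*=\Hom_\ka(B,\ks_\ka A)^*,
\end{align*}
where the first isomorphism is full faithfulness of $i$, the second is Serre duality on $\kt$ applied to the objects $iA$ and $iB$, and the third is the adjunction $i\dashv i^R$. Since each step is natural in both variables, the composite is the required natural isomorphism $\Hom_\ka(A,B)\cong\Hom_\ka(B,\ks_\ka A)^*$, which identifies $\ks_\ka$ as a Serre functor for $\ka$.

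To conclude that $\ks_\ka$ is a Serre functor in the strict sense, I would invoke the standard fact that an additive endofunctor of a $\IC$-linear, Hom-finite triangulated category admitting such a natural duality isomorphism is automatically an exact equivalence, and is unique up to canonical isomorphism; alternatively one checks the equivalence property by hand. I do not expect any genuine obstacle here: the whole argument is a formal manipulation of adjunctions, and the only point that deserves a word of care is the naturality of the isomorphisms in the displayed chain, which holds because every ingredient — full faithfulness of $i$, the unit and counit of $i\dashv i^R$, and Serre duality on $\kt$ — is itself natural.
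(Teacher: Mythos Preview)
Your proof is correct and follows exactly the same route as the paper: the paper's proof consists of the single chain of isomorphisms $\Hom_\ka(a,a')\cong\Hom_\kt(i(a),i(a'))\cong\Hom_\kt(i(a'),\ks_\kt i(a))^\vee\cong\Hom_\ka(a',i^R\ks_\kt i(a))^\vee$, which is precisely your displayed computation. Your additional remarks on naturality and on $\ks_\ka$ being an equivalence are more careful than the paper, which simply omits them as standard.
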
  

\begin{proof}
Given $a,a'\in \ka$, we compute $\Hom_\ka(a,a')\cong\Hom_\kt(i(a),i(a'))\cong \Hom_\kt(i(a'),\ks_\kt i(a))^\vee\cong \Hom_\ka(a',i^R\ks_\kt   i(a))^\vee$.
\end{proof}

\begin{remark}\label{dglifts2}
Assume $\ka$ is an admissible subcategory of $\Db(Z)$. The embedding functor $i$ lifts to DG-enhancements (see, for example, \cite{Lunts-Kuznetsov} for this notion). It can be checked that the adjoints $i^R$ and $i^L$ also lift to so-called DG quasi-functors, which follows, for example, from \cite[Lem.\ 4.4 and Prop.\ 4.10]{Lunts-Kuznetsov}. So the composition of $i$ with any FM-transform will lift to the DG-level, hence we are in a position to use the results of \cite{Anno-Logvinenko2} and all required cones of natural transformations will exist.
\end{remark}

\subsection{Group actions and derived categories}\label{subsection:equivariant} 

Let $G$ be a finite group acting on a smooth projective variety $Z$. The \emph{equivariant derived category}, denoted by $\Db_G(Z)$, is defined as $\Db(\Coh^G(Z))$, see, for example, \cite{Ploog} for details.
Recall that for every subgroup $H\subset G$ the restriction functor $\Res\colon \Db_G(Z)\to \Db_H(Z)$ has the inflation functor $\Inf\colon \Db_H(Z)\to \Db_G(Z)$ as a left and right adjoint (see e.g.\ \cite[Sect.\ 1.4]{Ploog}). It is given for $A\in \Db(Z)$ by
\begin{align}\label{infdef} \Inf(A)=\bigoplus_{[g]\in H\setminus G}g^* A
\end{align}
with the linearisation given by permutation of the summands.

If $G$ acts trivially on $Z$, there is also the functor $\triv\colon\Db(Z)\to \Db_{G}(Z)$ which equips an object with the trivial $G$-linearisation. Its left and right adjoint is the functor $(-)^G\colon \Db_G(Z)\to \Db(Z)$ of invariants. 

\smallskip

\noindent
\textbf{Convention.} When working with Fourier--Mukai transforms, we will frequently identify the functor with its kernel.

\section{Proofs of the main results}\label{section:proofs}

\subsection{Surfaces with $p_g=q=0$}

Recall the FM-transforms from Subsection \ref{fmtransforms}. To compute $RF$ in the examples known so far, one usually works out the various compositions such as, for example, $R''F'$. This can be done rather quickly in certain situations:

\begin{proposition}
Let $S$ be a surface with $p_g=q=0$. Then the following holds:
\begin{align*}
R''F'&\cong \ko_{S\times S},\\
R'F'&\cong (\ko_S\boxtimes \omega_S)[2],\\
R''F''&\cong \ko_\Delta\oplus \ko_{S\times S},\\
R'F''&\cong (\ko_S\boxtimes \omega_S)[2].
\end{align*}
\end{proposition}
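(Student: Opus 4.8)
The plan is to compute each composition $R'F'$, $R''F'$, $R''F''$, $R'F''$ as a Fourier--Mukai kernel on $S \times S$, using the standard convolution formula for composing FM-transforms together with the structure sequence $0 \to \ki_{\kz_n} \to \ko_{S\times S^{[n]}} \to \ko_{\kz_n} \to 0$ that defines the triangle $F \to F' \to F''$. Since $F' = \FM_{\ko_{S\times S^{[n]}}}$ has kernel the structure sheaf of the whole product, its right adjoint $R'$ has kernel $\ko_{S^{[n]} \times S} \otimes p_{S^{[n]}}^*\omega_{S^{[n]}}[\dim S^{[n]}]$ by Grothendieck--Serre duality (this is where the hypothesis $p_g = q = 0$ enters decisively, via the computation recalled in Subsection~2.1 that $\Ho^*(S^{[n]}, \ko_{S^{[n]}})$ is concentrated in degree zero and is one-dimensional). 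First I would set up precisely these two kernels for $R'$ and for $R''$ (the latter being the twisted derived dual of $\ko_{\kz_n}$, shifted appropriately).

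Next I would carry out the convolutions. For $R''F'$ one convolves the kernel of $F'$ (the structure sheaf) with that of $R''$; the structure-sheaf kernel makes this essentially a pushforward-pullback along $S^{[n]}$, and the $p_g=q=0$ cohomology vanishing collapses the answer to $\ko_{S\times S}$. For $R'F'$ one convolves $\ko_{S\times S^{[n]}}$ with the Serre-dual kernel of $R'$; again integration over $S^{[n]}$ reduces this to $\Ho^*(S^{[n]}, \omega_{S^{[n]}})$ tensored appropriately, and Serre duality against $\Ho^*(\ko_{S^{[n]}})$ gives $(\ko_S \boxtimes \omega_S)[2]$ — the shift by $2$ being the difference $\dim S^{[n]} - \dim S^{[n]} + 2$ coming from the surface factor's dualizing data, or more honestly from tracking $\omega_S$ through the relative duality. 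For $R''F''$ and $R'F''$ I would use the structure triangle: applying $R''$ and $R'$ to the triangle $F \to F' \to F''$ and to its rotation, together with the two computations just done, reduces everything to identifying $R''F''$ up to the already-known terms; the extra summand $\ko_\Delta$ in $R''F''$ reflects that $F''$ restricted to a single point $[\xi]$ remembers the subscheme, giving the diagonal contribution, while the $\ko_{S\times S}$ piece comes from the "constant" part — concretely this can be extracted from the long exact triangle relating $\ko_{\kz_n} \boxtimes (\ko_{\kz_n})^\vee$-type kernels and their pushforwards, or by the excess-intersection description of $\kz_n \times_{S^{[n]}} \kz_n$.

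The main obstacle I expect is the computation of $R''F''$, equivalently the structure of $\kz_n \times_{S^{[n]}} \kz_n$ inside $S \times S^{[n]} \times S$ and its pushforward to $S \times S$: this fibre product is not a local complete intersection in an obvious way and one must control its derived structure (there is a diagonal component $\ko_\Delta$ and an "off-diagonal" component accounting for two distinct points of the subscheme). I would handle this either by the known resolution of $\ko_{\kz_n}$ when one point is allowed to vary (reducing to $n=1$ plus a residual term, then arguing the residual term integrates to $\ko_{S\times S}$), or by directly invoking the base-change and the fact that a generic fibre of $\kz_n \to S^{[n]}$ consists of $n$ reduced points so the fibre product splits generically as diagonal plus complement. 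The remaining bookkeeping — correctly placing all shifts and the $\omega_S$ twists via Grothendieck duality for the smooth morphisms involved — is routine once the underlying sheaves are identified.
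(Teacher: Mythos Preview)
The paper's proof is simply a citation: the general formulae for these four compositions (for an arbitrary surface) are taken from \cite[Sect.\ 6]{Meachan}, and then one specialises using $\Ho^*(S^{[n]},\reg_{S^{[n]}})\cong\IC$, which holds precisely because $p_g=q=0$. Section~\ref{truncated} of this paper reproves the same formulae in the equivariant model $\Db_{\sym_n}(S^n)$ via the BKR equivalence, where $F''$ becomes $\Inf\, p_n^*\triv$ and the convolutions are elementary.

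Your direct-convolution strategy is the right idea, but there is a concrete error in your setup: the kernel of the \emph{right} adjoint $R'=(F')^R$ is $p_S^*\omega_S[2]=\reg_{S^{[n]}}\boxtimes\omega_S[2]$, not $p_{S^{[n]}}^*\omega_{S^{[n]}}[\dim S^{[n]}]$ (see Lemma~\ref{rank}, or \cite[Prop.\ 5.9]{Huybrechts}: the right adjoint twists by the canonical bundle of the \emph{source}, the left adjoint by that of the target). With the correct kernel one gets $R'F'\cong \Ho^*(\reg_{S^{[n]}})\otimes(\reg_S\boxtimes\omega_S)[2]$ immediately, and the $\omega_S$-factor and the shift by $2$ are already present --- there is no need for the ad~hoc ``relative duality'' bookkeeping you describe, which in your version does not actually produce $\omega_S$ from anywhere.

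For $R''F''$, the approach in \cite{Meachan} and in Section~\ref{truncated} avoids the derived geometry of $\kz_n\times_{S^{[n]}}\kz_n$ entirely: after BKR, $F''^RF''$ becomes $(\Inf\, p_n^*)^R(\Inf\, p_n^*)$, which decomposes by adjunction as $\reg_\Delta\otimes S^{n-1}\Ho^*(\reg_S)\oplus(\reg_S\boxtimes\reg_S)\otimes S^{n-2}\Ho^*(\reg_S)$. Your proposed fibre-product or excess-intersection route is in principle possible but genuinely harder and not what is done here; and your triangle suggestion is circular, since $R''F$ is precisely the quantity computed \emph{from} this Proposition in Lemma~\ref{rprimef}.
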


\begin{proof}
Follows immediately from the results in Section 6 of \cite{Meachan} by using $\Ho^*(S^{[n]}, \ko_{S^{[n]}})\cong \IC$.
\end{proof}

%
%

\begin{lemma}\label{rprimef}
We have $R'F\cong 0$ and $R''F\simeq \reg_\Delta[-1]$.
\end{lemma}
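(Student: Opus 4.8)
The plan is to read off both isomorphisms from the preceding Proposition by applying the right adjoints $R'$ and $R''$ to the distinguished triangle of functors $F\to F'\to F''$ of Subsection~\ref{fmtransforms}, which comes from the structure sequence $\ki_{\kz_n}\to\ko_{S\times S^{[n]}}\to\ko_{\kz_n}$. Since $F$ is the fibre of $F'\to F''$ and $R',R''$ are triangulated, this produces triangles $R'F\to R'F'\to R'F''$ and $R''F\to R''F'\to R''F''$, and by the Proposition the outer terms are known: $R'F'\cong R'F''\cong(\ko_S\boxtimes\omega_S)[2]$, while $R''F'\cong\ko_{S\times S}$ and $R''F''\cong\ko_\Delta\oplus\ko_{S\times S}$. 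In each case everything reduces to identifying the connecting morphism, and that identification is the real content.

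For $R'F\cong0$: by Künneth and $p_g=q=0$ one has $\Hom_{\Db(S\times S)}\big((\ko_S\boxtimes\omega_S)[2],(\ko_S\boxtimes\omega_S)[2]\big)\cong\IC$, so the map $\phi\colon R'F'\to R'F''$ is either $0$ or an isomorphism, and it suffices to see it is nonzero. I would check this on $\ko_S$: here $F'(\ko_S)\cong\ko_{S^{[n]}}$ and $F''(\ko_S)\cong\kt:=p_{S^{[n]}*}\ko_{\kz_n}$ (the tautological rank-$n$ bundle, $\kz_n\to S^{[n]}$ being finite flat), and the morphism $F'(\ko_S)\to F''(\ko_S)$ is the canonical inclusion $\ko_{S^{[n]}}\hookrightarrow\kt$, split by $\tfrac1n\tr$. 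Applying $R'=\Ho^*(S^{[n]},-)\otimes\omega_S[2]$ and using $\Ho^*(S^{[n]},\ko_{S^{[n]}})\cong\IC\cong\Ho^*(S^{[n]},\kt)$ — the second being $R'F''(\ko_S)$ twisted down, by the Proposition — one gets that $\phi(\ko_S)$ is a nonzero endomorphism of $\omega_S[2]$. Hence $\phi$ is an isomorphism and $R'F\cong0$.

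For $R''F\cong\ko_\Delta[-1]$: the same Künneth computation shows the connecting map $\psi\colon R''F'\to R''F''$ has two ``coordinates'', $\psi_\Delta\colon\ko_{S\times S}\to\ko_\Delta$ and $\psi_\times\colon\ko_{S\times S}\to\ko_{S\times S}$, each living in a one-dimensional space. If $\psi_\times\neq0$ it is an isomorphism, so $\psi$ is a split monomorphism with cokernel $\ko_\Delta$, and the triangle gives $R''F\cong\cone(\psi)[-1]\cong\ko_\Delta[-1]$. So the whole lemma comes down to the nonvanishing of $\psi_\times$, and this is the step I expect to be the main obstacle: unlike before there is no purely formal reason (one checks that $\psi_\times=0$ would give $R''F\cong\ki_\Delta\oplus\ko_{S\times S}[-1]$ instead), and one must actually follow $\psi=R''(F'\to F'')$. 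I would attack it in one of two ways. The first is to evaluate at $\ko_S$, where $\psi(\ko_S)=R''(\ko_{S^{[n]}}\hookrightarrow\kt)$, and to compare the splitting of $R''(\kt)=R''F''(\ko_S)$ induced by $\kt\cong\ko_{S^{[n]}}\oplus\ker(\tfrac1n\tr)$ with the splitting induced by $R''F''\cong\id\oplus C''$, where $C''=\cone(\id\to R''F'')$ is the cotwist of $(F'',R'')$ and is precisely the $\ko_{S\times S}$-summand; the point to prove is that $R''(\ko_{S^{[n]}})$ is not swallowed by the image of the unit, which is exactly $\psi_\times\neq0$. The second, more hands-on route is to compute the Fourier--Mukai kernel of $R''F=R''\circ F$ on $S\times S$ directly, as the convolution over $S^{[n]}$ of $\ki_{\kz_n}$ with the kernel of $R''$; because $\kz_n$ is Cohen--Macaulay of codimension $2$ in $S\times S^{[n]}$, $\ko_{\kz_n}^\vee$ is a single sheaf placed in cohomological degree $2$, so this kernel is a twist of a sheaf on $\kz_n$, and feeding the structure sequence $\ki_{\kz_n}\to\ko\to\ko_{\kz_n}$ into the convolution reproduces $R''F'\cong\ko_{S\times S}$ from the $\ko$-term and $R''F''\cong\ko_\Delta\oplus\ko_{S\times S}$ from the $\ko_{\kz_n}$-term, with the surviving $\ko_\Delta[-1]$ appearing as the contribution of the diagonal $\{s_1=s_2\}$ in the ``double incidence'' locus $\{(s_1,[\xi],s_2):s_1,s_2\in\supp\xi\}$, over which the projection to $S\times S$ jumps in fibre dimension. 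This second computation is the technical heart, and is exactly the kind of kernel manipulation for which the last section provides background.

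As a by-product, applying $(-)^R$ to $F\to F'\to F''$ and composing with $F$ gives a triangle $R''F\to R'F\to F^RF\to R''F[1]$; inserting $R'F\cong0$ yields $F^RF\cong R''F[1]\cong\ko_\Delta$, i.e.\ $F$ is fully faithful, which is Theorem~\ref{hilb-semiorth}.
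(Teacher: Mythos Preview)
Your structural approach is the same as the paper's: apply $R'$ and $R''$ to the triangle $F\to F'\to F''$, and identify the induced maps $R'F'\to R'F''$ and $R''F'\to R''F''$ using the formulae of the preceding Proposition. The difference lies entirely in how the two key non-vanishings are established.

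For $R'F\cong 0$, your evaluation-at-$\ko_S$ argument is a clean self-contained verification that the paper does not give; the paper simply cites \cite[Sect.\ 6]{Meachan} and Section~\ref{truncated} for the fact that $R'F'\to R'F''$ is an isomorphism. Your argument works: the Hom space between the kernels is one-dimensional by K\"unneth and $p_g=q=0$, and the induced map $\Ho^0(\ko_{S^{[n]}})\to\Ho^0(\kt)$ is visibly nonzero.

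For $R''F\cong\ko_\Delta[-1]$, you have correctly isolated $\psi_\times\neq 0$ as the crux and acknowledged that it does not follow formally. The paper likewise does not prove this inside the lemma; it again cites \cite[Sect.\ 6]{Meachan} and Section~\ref{truncated}, where the map $\G''^R\G'\to\G''^R\G''$ is described explicitly in terms of restriction and evaluation and its $\ko_{S\times S}$-component is seen to be the identity. Your second proposed route --- compute the convolution kernel directly using the structure of $\kz_n$ --- is essentially what Section~\ref{truncated} carries out via Scala's resolution and the truncated functor $G$. Your first proposed route (comparing the two splittings of $R''(\kt)$) is less promising as stated: the cotwist splitting $R''F''\cong\id\oplus C''$ is defined so that the $\id$-summand \emph{is} the image of the unit $R''F'\to R''F''$, so what you want to show is precisely that the other splitting (from $\tfrac1n\tr$) does not coincide with it, which is just a restatement of $\psi_\times\neq 0$ rather than a way to check it.

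Once $\psi_\times$ is known to be an isomorphism, the paper finishes with the elementary observation that if $\varphi=(\varphi_1,\varphi_2)\colon A\to A\oplus B$ has $\varphi_1$ invertible then $\coker\varphi\cong B$, giving $R''F\cong\ko_\Delta[-1]$; your version (``split monomorphism with cokernel $\ko_\Delta$'') is the same argument.
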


\begin{proof}
The map $F'\to F''$ induces an isomorphism $R'F'\to R'F''$ and the component $\reg_{S\times S}\to \reg_{S\times S}$ of the induced map $R''F'\to R'F''$ is an isomorphism too; see \cite[Sect.\ 6]{Meachan} or Section \ref{truncated}. Hence, the first assertion follows from the triangle $R'F\to R'F'\to R'F''$. 

We then consider the triangle $R''F\to R''F'\to R''F''$ and check that the cokernel of the map $R''F'\to R''F''$ is isomorphic to $\ko_\Delta$. Indeed, if $\varphi=(\varphi_1,\varphi_2)\colon A\to A\oplus B$ is a map in an abelian category such that the first component is an isomorphism (so $\varphi$ is an injection), the cokernel has to be isomorphic to $B$. To see this, embed the situation into the abelian category of modules over some ring using the Freyd--Mitchell theorem and define $c\colon A\oplus B\to B$ by $(a,b)\mapsto b-\varphi_2(\varphi_1^{-1}(a))$. It is clear that $c\circ\varphi=0$. Now given a morphism $f\colon A\oplus B\to C$ such that $f\circ\varphi=0$, define $g\colon B\to C$ by $b\mapsto f(0,b)$. It is then straightforward to check that $g\circ c=f$.
\end{proof}


\begin{proposition}\label{rfidentity}
The composition $RF$ is isomorphic to the identity.
\end{proposition}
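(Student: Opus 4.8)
The plan is to compute $RF = F^R \circ F$ directly from the triangle of kernels $F \to F' \to F''$ together with Lemma~\ref{rprimef}, which already tells us that $R'F \cong 0$ and $R''F \cong \reg_\Delta[-1]$. Applying the functor $R(-) := F^R$ to the triangle $F \to F' \to F''$ is the obvious first move. More precisely, since $F$ sits as the first term of the triangle, we want to understand $RF = F^R F$ where $F^R$ is the right adjoint of the ideal-sheaf functor. The key observation is that $F^R = R$ fits into a triangle of its own dual to the structure sequence, so that $RF$ is controlled by $R'F$ and $R''F$ via the octahedral axiom applied to the composition $F \to F' \to F''$.

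First I would set up the triangle obtained by applying $(-)^R$-composition carefully. Write the structure sequence $\ki_{\kz_n} \to \ko_{S\times S^{[n]}} \to \ko_{\kz_n}$ as a triangle in $\Db(S\times S^{[n]})$; convolving with any kernel is exact, so we get the triangle $F \to F' \to F''$ and, dually, a triangle $R'' \to R' \to R$ of the right-adjoint functors $\Db(S^{[n]})\to\Db(S)$ (the right adjoint of the FM-transform with kernel $\ke$ has kernel $\ke^\vee \otimes (\text{shifted canonical bundles})$, so the triangle of kernels dualises, possibly with a shift that I will need to track). Composing this triangle of functors on the right with $F$ gives a triangle $R''F \to R'F \to RF$. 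By Lemma~\ref{rprimef}, the middle term $R'F$ vanishes, so the connecting map gives an isomorphism $RF \cong R''F[1] \cong \reg_\Delta[-1][1] = \reg_\Delta = \ko_\Delta$. Since $\mathrm{FM}_{\ko_\Delta}$ is the identity functor, this proves $RF \cong \id$.

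The main obstacle I anticipate is getting the adjunction bookkeeping exactly right: one must verify that $F^R$ genuinely sits in the triangle $R'' \to R' \to R$ (as opposed to $R \to R' \to R''$) and with the correct shift, since dualising a short exact sequence of sheaves and twisting by relative canonical bundles can introduce both a rotation of the triangle and a degree shift. The cleanest way to handle this is to dualise the structure sequence of $\kz_n \subset S \times S^{[n]}$ directly: apply $R\sHom(-, \ko)$ (or the Grothendieck--Verdier functor computing the kernel of the right adjoint), use that $\kz_n$ is a codimension-$n$ (more relevantly, for the relevant local structure, the right adjoint kernel of $\ki_{\kz_n}$ is computed from $\RHom(\ki_{\kz_n}, \ko)$ twisted appropriately), and match the shift against the already-established formula $R''F \cong \reg_\Delta[-1]$ — which itself was derived in Lemma~\ref{rprimef} — to pin down the normalisation. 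Alternatively, and perhaps more safely, one can avoid dualising altogether: note that $F = \mathrm{FM}_{\ki_{\kz_n}}$ and use the triangle $F \to F' \to F''$ to write $F$ as the shifted cone $F \cong \cone(F' \to F'')[-1]$, then compose on the left with $F^R = R$ to get $RF \cong \cone(RF' \to RF'')[-1]$, and here $RF', RF''$ are not the quantities computed in the Proposition above (those are $R'F', R''F'$, etc.) — so in fact the adjoint-triangle approach really is the right one, and the dualisation step is unavoidable. Either way, once the triangle $R''F \to R'F \to RF$ is in hand with the correct shift, the vanishing $R'F \cong 0$ from Lemma~\ref{rprimef} finishes the argument immediately.
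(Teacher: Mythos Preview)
Your proposal is correct and is essentially the paper's own proof: the paper simply says ``Use the triangle $R''F\to R'F\to RF$ and the above lemma,'' which is exactly your core argument. Your extended discussion of the adjoint-triangle bookkeeping is more cautious than the paper (which takes the triangle $R''\to R'\to R$ for granted from the dualised structure sequence), but the substance is identical.
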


\begin{proof}
Use the triangle $R''F\to R'F\to RF$ and the above lemma. 
\end{proof}

We are now ready to show our first main result.

\begin{proof}[Proof of Theorem \ref{hilb-semiorth}]
Since $RF\cong \id$, $F$ is fully faithful. On the other hand, $F$ has adjoints, so $F(\Db(S))$ is an admissible subcategory of $\Db(S^{[n]})$.
\end{proof}

\begin{remark}
The above shows that for any surface with $p_g=q=0$, the functor $F$ is quite far from being a spherical or a $\IP^n$-functor.
\end{remark}

\begin{remark}
There exist surfaces $S$ of general type with $p_g=q=0$ such that $\Db(S)$ contains an admissible subcategory whose Hochschild homology is trivial and whose Grothendieck group is finite or torsion, see, for example, \cite{BBKS12}. Therefore, by Theorem \ref{hilb-semiorth}, $\Db(S^{[n]})$ also contains such a (quasi-)phantom category.
\end{remark}

\subsection{Canonical bundles of order two and spherical functors}

Consider a $d$-dimensional variety $Y$ whose canonical bundle is of order $2$ and its canonical cover $\pi\colon\tilde{Y}\to Y$ with deck transformation $\tau\colon \tilde Y\to \tilde Y$.

\begin{lemma}
The functor $\pi^*\colon\Db(Y)\to \Db(\tilde Y)$ is split spherical with cotwist $C=(-)\otimes \omega_Y$ and twist $T_{\pi^*}=\tau^*[1]$. 
\end{lemma}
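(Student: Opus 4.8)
The plan is to compute the unit and counit of the adjunction $\pi^* \dashv \pi_*$ explicitly and identify all the relevant cones. First I would recall that since $\pi$ is \'etale of degree $2$ with $\pi_*\ko_{\tilde Y} = \ko_Y \oplus \omega_Y$, the composition $\pi_*\pi^*$ is given, by the projection formula, by $(-)\otimes \pi_*\ko_{\tilde Y} \cong \id \oplus \, \mathrm{M}_{\omega_Y}$. Since $\omega_Y$ has order $2$, the functor $H := \mathrm{M}_{\omega_Y}$ satisfies $H^2 \cong \id$, so $\pi_*\pi^* \cong \id \oplus H$ and the unit $\eta\colon \id \to \pi_*\pi^*$ is the inclusion of the first summand (this splitting is the reason the functor will be \emph{split} spherical). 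Consequently the cotwist $C$, defined as the cone on $\eta$, is $\mathrm{M}_{\omega_Y}$, which is an equivalence, and the triangle $\id \to \pi_*\pi^* \to C$ splits. Note here one should be slightly careful: the lemma is stated for $\pi^*$ as a functor $\Db(Y)\to \Db(\tilde Y)$ with the roles of source/target matching the conventions of Subsection \ref{sphericalpn}, so $\pi_*$ plays the role of both $F^R$ and (up to the cotwist) $F^L$; indeed $\pi$ \'etale forces $\pi^!\cong \pi^*$, hence $\pi_* \cong (\pi^*)^L \cong (\pi^*)^R$.

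Next I would compute the twist $T_{\pi^*}$, the cone on the counit $\varepsilon\colon \pi^*\pi_* \to \id_{\tilde Y}$. The standard computation for a finite \'etale Galois cover with group $G = \IZ/2\IZ$ gives $\pi^*\pi_* \cong \bigoplus_{g\in G} g^* = \id \oplus \tau^*$, and the counit is the sum of the identity on the first summand and the canonical comparison maps; its cone is therefore $\tau^*[1]$. The cleanest way to see this is via the base-change square for $\pi$ against itself: $\tilde Y \times_Y \tilde Y \cong \tilde Y \sqcup \tilde Y$ (graph of $\id$ and graph of $\tau$, since the action is free), so on kernels $\pi^*\pi_*$ has kernel $\ko_\Delta \oplus \ko_{\Gamma_\tau}$, the counit is the projection to $\ko_\Delta$ followed by $\ko_\Delta \to \ko_{\tilde Y}$... more precisely the kernel of $\varepsilon$ is the complement $\ko_{\Gamma_\tau} = $ kernel of $\tau^*$, placed so that the cone is $\tau^*[1]$.

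Finally I would verify the remaining spherical axiom $(\pi^*)^R \cong C\,(\pi^*)^L$, i.e.\ $\pi_* \cong \mathrm{M}_{\omega_Y}\circ \pi_*$; but $\mathrm{M}_{\omega_Y}\circ \pi_* \cong \pi_*\circ \mathrm{M}_{\pi^*\omega_Y} \cong \pi_*$ since $\pi^*\omega_Y \cong \ko_{\tilde Y}$, so this holds. (Equivalently, using Serre functors, $\ks_{\tilde Y}\pi^* C \cong \pi^*\ks_Y$ follows from $\ks_{\tilde Y} \cong [d]$, $\ks_Y \cong \mathrm{M}_{\omega_Y}[d]$, and $\pi^*\omega_Y \cong \ko$.) With the cotwist an equivalence, the split triangle, and the adjoint condition all checked, $\pi^*$ is split spherical and $T_{\pi^*}\cong \tau^*[1]$ as claimed.

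I expect the main obstacle to be bookkeeping rather than anything deep: namely, pinning down that the counit $\varepsilon$ really does split off $\id_{\tilde Y}$ as a direct summand of $\pi^*\pi_*$ with the \emph{correct} map, so that the cone is genuinely $\tau^*[1]$ and not merely an object abstractly isomorphic to it. This requires tracking the $G$-linearisations through the formula \eqref{infdef} for the inflation/induction functor and through the base-change isomorphism, which is the one slightly delicate point; everything else reduces to the projection formula and $\omega_Y^{\otimes 2}\cong \ko_Y$.
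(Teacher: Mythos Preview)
Your proposal is correct and follows essentially the same approach as the paper, which simply invokes the three identities $\pi_*\pi^*\cong (-)\otimes(\ko_Y\oplus\omega_Y)$, $\pi^*\omega_Y\cong\omega_{\tilde Y}\,(\cong\ko_{\tilde Y})$, and $\pi^*\pi_*\cong\id\oplus\tau^*$. You have spelled out in more detail why the unit and counit split off the identity summands and why the adjoint condition holds, but the underlying argument is the same.
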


\begin{proof}
This follows from the identities $\pi_*\pi^*\cong (-)\otimes (\reg_Y\oplus \omega_Y)$, $\pi^*\omega_Y\cong \omega_{\tilde Y}$, and $\pi^*\pi_*\cong \id\oplus \tau^*$. 
\end{proof}

\begin{proposition}\label{sphericaldeg2cover}
 If $\ka$ is an admissible subcategory of $\Db(Y)$ with embedding functor $i$, then the functor $\pi^*i\colon \ka\to\Db(\tilde{Y})$ is split spherical. The associated twist $\tilde{T}_{\ka}:=T_{\pi^*i}$ is equivariant. 
\end{proposition}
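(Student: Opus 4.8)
The plan is to factor $\pi^* i$ through $\pi^*\colon \Db(Y)\to \Db(\tilde Y)$, which we already know is split spherical, and then show that precomposing a split spherical functor with the embedding of an admissible subcategory preserves sphericality. Concretely, write $G = \pi^* i$, so $G = \pi^* \circ i$, and compute its adjoints: $G^R \cong i^R \circ \pi_*$ and $G^L \cong i^L \circ \pi_*$, both of which exist since $i$ is admissible and $\pi$ is finite flat. (All functors involved are Fourier--Mukai kernels, or compositions of an embedding of an admissible subcategory with such, so by Remarks \ref{dglifts1} and \ref{dglifts2} the relevant cones exist at the DG level and we may freely work with triangles of functors.)

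First I would compute the cotwist $C_G$ of $G$. We have $G^R G = i^R \pi_* \pi^* i \cong i^R \circ \bigl((-)\otimes(\reg_Y\oplus\omega_Y)\bigr)\circ i$ using $\pi_*\pi^* \cong (-)\otimes(\reg_Y\oplus\omega_Y)$ from the preceding lemma. Since $i$ is the embedding of a subcategory and $i^R i \cong \id_\ka$, the natural transformation $\eta_G\colon \id_\ka \to G^R G$ is split by the summand coming from $\reg_Y$, and the cone is $C_G \cong i^R \circ \mathrm{M}_{\omega_Y} \circ i$. In particular the unit triangle $\id_\ka \to G^R G \to C_G$ splits, so once we show $G$ is spherical it is automatically \emph{split} spherical. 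The key point is that $C_G$ is an equivalence: its obvious quasi-inverse is $i^R \circ \mathrm{M}_{\omega_Y^{-1}} \circ i$, and one checks $i^R \mathrm{M}_{\omega_Y} i \, i^R \mathrm{M}_{\omega_Y^{-1}} i \cong \id_\ka$ using that $\mathrm{M}_{\omega_Y}$ preserves the admissible subcategory $i(\ka)$ up to the idempotent $i i^R$ — more precisely, since $\mathrm{M}_{\omega_Y}$ is a standard autoequivalence it sends admissible subcategories to admissible subcategories, and here one must verify $\mathrm{M}_{\omega_Y}$ actually stabilises $i(\ka)$. This is the one genuine hypothesis to pin down; in the application $\ka = F(\Db(X))$ it holds because $\omega_{X^{[n]}}$-twist interacts well with the image of $F$, but in the stated generality one should note it follows from $\omega_Y$ being $2$-torsion together with $i(\ka)$ being a \emph{full} component — actually the cleanest route is: $C_G$ is the cotwist of a functor possessing a right adjoint, and the cone on $\eta_G$ being split means $C_G$ is a direct summand of $G^R G$; combined with $\pi$ étale of degree $2$ one gets $C_G^2 \cong i^R \mathrm{M}_{\omega_Y^2} i \cong i^R i \cong \id_\ka$, so $C_G$ is its own inverse, hence an equivalence.

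Next I would verify the adjunction condition $G^R \cong C_G\, G^L$. Using Lemma \ref{serreadmissible}, $\ka$ has Serre functor $\ks_\ka \cong i^R \ks_{\Db(Y)} i$, and since $\omega_Y$ is torsion, $\Db(Y)$ and $\Db(\tilde Y)$ have Serre functors given by shift composed with $\mathrm{M}_{\omega_Y}$ (resp.\ just shift on $\tilde Y$ since $\omega_{\tilde Y}\cong\reg$). So the condition $G^R \cong C_G G^L$ is equivalent to $\ks_{\Db(\tilde Y)}\, G\, C_G \cong G\, \ks_\ka$. Unwinding: $\ks_{\Db(\tilde Y)} = [d]$, $\ks_\ka = i^R([d]\circ\mathrm{M}_{\omega_Y}) i = i^R \mathrm{M}_{\omega_Y} i\,[d] = C_G[d]$ (the middle step using that $\mathrm{M}_{\omega_Y}$ commutes with everything, plus $i i^R$ idempotency), so $G\ks_\ka \cong G C_G [d]$ and $\ks_{\Db(\tilde Y)} G C_G \cong G C_G[d]$ as well — the two sides agree. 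This is exactly the computation that $\pi^*$ is spherical, pushed through the admissible embedding, so it goes through formally. Therefore $G = \pi^* i$ is split spherical.

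Finally, equivariance of $\tilde T_\ka := T_{\pi^* i}$: I would compute the twist directly. $G G^R = \pi^* i\, i^R \pi_*$, and $i i^R$ is the projection onto $i(\ka) \subset \Db(Y)$; the counit $G G^R \to \id_{\Db(\tilde Y)}$ and its cone $\tilde T_\ka$ should be expressed as a Fourier--Mukai kernel, and then equivariance under $G = \IZ/2\IZ$ (generated by $\tau$) amounts to checking $\tau^* \tilde T_\ka \cong \tilde T_\ka \tau^*$. The clean argument: $G = \pi^* i$ is $G$-invariant in the sense that $\tau^* G \cong G$ canonically (because $\tau^* \pi^* \cong \pi^*$), hence $\tau^* G^R \cong G^R$, hence $\tau^*$ commutes with the counit $\epsilon_G$ and therefore with its cone $\tilde T_\ka$. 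This uses only that $\pi\tau = \pi$ and functoriality of cones, which is available at the DG level by Remark \ref{dglifts2}. The main obstacle I anticipate is making the invertibility of $C_G$ — equivalently, the interaction of $\mathrm{M}_{\omega_Y}$ with the admissible subcategory $i(\ka)$ — fully rigorous; everything else is a formal consequence of the already-established sphericality of $\pi^*$ together with standard manipulations of adjoints and Serre functors.
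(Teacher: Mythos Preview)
Your overall strategy matches the paper's: compute $G^RG$ via $\pi_*\pi^*$, identify the cotwist as $C_G = i^R\mathrm{M}_{\omega_Y}i$, check the Serre-functor compatibility, and then verify equivariance of the twist using $\pi\tau=\pi$. The equivariance argument is correct and is exactly what the paper does.

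There is, however, a genuine gap in your proof that $C_G$ is an equivalence. Your ``cleanest route'' asserts
\[
C_G^2 \;=\; i^R\mathrm{M}_{\omega_Y}i\,i^R\mathrm{M}_{\omega_Y}i \;\cong\; i^R\mathrm{M}_{\omega_Y^2}i \;\cong\; \id_\ka,
\]
but the first isomorphism would require $ii^R$ to act as the identity on $\mathrm{M}_{\omega_Y}\bigl(i(\ka)\bigr)$, i.e.\ that $\mathrm{M}_{\omega_Y}$ preserves the subcategory $i(\ka)$. This is not assumed and is false in general. Concretely, take $\ka$ to be the span of a single exceptional object $E$; then $C_G\cong[-d]$ (since $\pi^*E$ is spherical) and $C_G^2\cong[-2d]\not\cong\id_\ka$. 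So the claim $C_G^2\cong\id_\ka$ is simply wrong.

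The fix is already present in your own write-up, just in the wrong place. In the Serre-functor check you correctly compute
\[
\ks_\ka \;\cong\; i^R\ks_{\Db(Y)}i \;\cong\; i^R\mathrm{M}_{\omega_Y}i\,[d] \;=\; C_G[d]
\]
via Lemma~\ref{serreadmissible}. Since Serre functors are always autoequivalences, this immediately gives that $C_G\cong\ks_\ka[-d]$ is an equivalence; no further argument is needed, and in particular there is no need for $\mathrm{M}_{\omega_Y}$ to stabilise $i(\ka)$. If you move this observation \emph{before} the discussion of invertibility and delete the $C_G^2\cong\id$ paragraph entirely, the proof is complete and essentially identical to the paper's (which packages the same two ingredients---the sphericality of $\pi^*$ and Lemma~\ref{serreadmissible}---by citing \cite[Prop.\ on p.~7]{Addington}).
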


\begin{proof}
That $\pi^*i$ is split spherical follows by the previous lemma together with Lemma \ref{serreadmissible}; compare \cite[Prop.\ on p.7]{Addington}.

To see that $\tilde{T}_\ka$ is equivariant, we note that $\pi^*ii^R\pi_*\tau_*\cong \pi^*ii^R\pi_*\cong \tau_*\pi^*ii^R\pi_*$, so $\tau_*\tilde{T}_\ka\cong \tilde{T}_\ka \tau_*$, since both are a cone of $\pi^*ii^R\pi_*\to \tau_*$.
\end{proof}

\begin{example}
If $\ka\cong \Db(\Spec(\IC))$ is the category generated by an exceptional object $E$, then the twist associated to $\pi^*i$ is the spherical twist $\mathrm{ST}_A$, where $A\cong \pi^*i(E)$ is a spherical object by \cite[Prop.\ 3.13]{Seidel-Thomas}.
\end{example}

\begin{remark}
Let $\Db(Y)=\langle \ka,\kb\rangle$ be a semi-orthogonal decomposition.
By \cite[Thm.\ 11]{Addington-Aspinwall} we have $\tilde T_{\ka}\tilde T_{\kb}\cong \tau^*[1]$.
\end{remark}

\begin{proposition}\label{descentdesc}
If $\langle \ka,\ka\otimes \omega_Y\rangle ^\perp\neq 0$ and $\ka\cong \Db(Z)$ for some smooth projective variety with $\dim Z\le d-2$, then $\tilde{T}_\ka$ and its descents are non-standard equivalences of $\Db(\tilde{Y})$ and $\Db(Y)$, respectively.
\end{proposition}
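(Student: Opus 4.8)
The plan is to show that $\tilde T_\ka$ cannot be standard by a valued-object argument in the style of Proposition \ref{prop:Addingtongeneralised}, and then transfer non-standardness to the descents. First I would compute the twist on two test objects that behave differently. By hypothesis there is a nonzero $\alpha\in\langle\ka,\ka\otimes\omega_Y\rangle^\perp$; choosing a nonzero object in $\ka$ and applying $\pi^*$ gives $\alpha':=\pi^*i(b)$ for some $0\neq b\in\ka$. The key point is to evaluate $\tilde T_\ka$ on $\pi^*\alpha$ and on $\alpha'$. Since $\tilde T_\ka$ is the cone on $\pi^*ii^R\pi_*\to\tau_*$, and because $\alpha$ is right-orthogonal to $\ka$ (and to $\ka\otimes\omega_Y$, which controls $i^R\pi_*\pi^*=i^R(\id\oplus M_{\omega_Y})$), one gets $i^R\pi_*\pi^*\alpha\cong 0$, hence $\tilde T_\ka(\pi^*\alpha)\cong\tau^*\pi^*\alpha\cong\pi^*\alpha$, i.e.\ the twist fixes $\pi^*\alpha$ up to the trivial shift $[0]$ and the trivial standard autoequivalence $\id$. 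On the other hand, $\pi^*ii^R\pi_*\circ\pi^*i\cong\pi^*ii^R(\id\oplus M_{\omega_Y})i\cong\pi^*i$ (using $i^R i\cong\id$ and $i^R M_{\omega_Y} i\cong 0$ since $\ka\perp\ka\otimes\omega_Y$ in the relevant direction — this is exactly where the hypothesis enters), so the defining triangle for $\tilde T_\ka$ restricted to the image of $\pi^*i$ reads $\pi^*i\to\tau_*\pi^*i\to\tilde T_\ka\pi^*i$, giving $\tilde T_\ka\pi^*i\cong\operatorname{cone}(\pi^*i\to\pi^*i)[\,\cdot\,]$; a short computation identifies this with $\pi^*i[1]$ (a shift by $1$, not $0$).

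So I would set up the dichotomy: $\tilde T_\ka(\pi^*\alpha)\cong\pi^*\alpha$ (shift $i=0$) while $\tilde T_\ka\circ\pi^*i\cong\pi^*i\,[1]$ (shift $j=1$), with $i\neq j$, and both "error" autoequivalences $\Phi_1,\Phi_2$ lying in $\{\id,M_{\omega_{\tilde Y}}\}$ (in fact $\id$). Applying Proposition \ref{prop:Addingtongeneralised} with $\Psi=\id$ yields $\pi^*i(b)\in(\pi^*\alpha)^\perp$ and $\pi^*\alpha\in(\pi^*i(b))^\perp$ for all $b\in\ka$. Now suppose for contradiction that $\tilde T_\ka$ were standard, say $\tilde T_\ka\cong M_{\kl}\circ g_*\circ[m]$ for a line bundle $\kl$ and automorphism $g$ of $\tilde Y$. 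Such a functor preserves (shifts of) sheaves and, crucially, preserves supports up to the automorphism $g$. Using $\dim Z\le d-2$, the object $\pi^*i(b)$ is supported on a proper closed subset $\pi^{-1}(\operatorname{supp} i(b))$ of $\tilde Y$ of dimension $\le d-2$ (here I use that $i$ is a Fourier–Mukai embedding of $\Db(Z)$, so the support of $i(b)$ is contained in the image of $Z$, of dimension $\le d-2$), whereas a nonzero standard autoequivalence applied to $\pi^*\alpha$ has the same-dimensional support as $\pi^*\alpha$; one then derives a contradiction with the mutual orthogonality just established, e.g.\ by noting that a standard $\tilde T_\ka$ would have to send $\pi^*\alpha$ into $(\pi^*i(b))^\perp$ for \emph{all} $b$, forcing $\pi^*\alpha$ to be orthogonal to the entire admissible subcategory $\pi^*i(\ka)$ \emph{and} its image under arbitrary powers, which combined with the support/shift incompatibility is impossible unless $\alpha=0$ — contradiction.

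For the descents: if $\varphi$ is a descent of $\tilde T_\ka$ to $\Db(Y)$ and $\varphi$ were standard, then by the intertwining $\pi^*\varphi\cong\tilde T_\ka\pi^*$ from Subsection \ref{ccequi}, $\tilde T_\ka$ would agree with $\pi^*(\text{standard})$ on the image of $\pi^*$, which is a spanning class; since the pullback of a standard autoequivalence of $\Db(Y)$ along the étale map $\pi$ is again standard (automorphisms lift, line bundles pull back, shifts are shifts), and an autoequivalence is determined by its restriction to a spanning class together with the action on morphisms, one concludes $\tilde T_\ka$ would be standard, contradicting the above. Hence no descent is standard either. The main obstacle I anticipate is making the support argument fully rigorous: one must be careful that "standard" autoequivalences are exactly the compositions of $M_\kl$, $g_*$, and shifts, and that such a functor genuinely cannot convert an object with $(d{-}2)$-dimensional support into one orthogonal to a given admissible $\Db(Z)$-subcategory while simultaneously respecting the shift discrepancy $i\neq j$ — packaging this cleanly (rather than by ad hoc support chasing) is the delicate part, and it is precisely what Proposition \ref{prop:Addingtongeneralised} is designed to streamline.
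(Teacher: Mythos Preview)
Your argument has two genuine gaps.

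First, you claim $i^R \mathrm{M}_{\omega_Y} i \cong 0$ ``since $\ka\perp\ka\otimes\omega_Y$ in the relevant direction --- this is exactly where the hypothesis enters''. But the hypothesis $\langle \ka,\ka\otimes\omega_Y\rangle^\perp\neq 0$ only says that the common right orthogonal of $\ka$ and $\ka\otimes\omega_Y$ is nonzero; it says nothing whatsoever about $\ka$ and $\ka\otimes\omega_Y$ being orthogonal to each other. In fact, by Lemma \ref{serreadmissible} one has $i^R \mathrm{M}_{\omega_Y} i \cong i^R \ks_{\Db(Y)} i\,[-d] \cong \ks_\ka[-d]$, which is never zero. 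Using $TF\cong FC[1]$ for a spherical functor with cotwist $C$, the correct formula is
\[
\tilde T_\ka\,\pi^* i \;\cong\; \pi^* i\,\ks_\ka[1-d] \;\cong\; \pi^* i\,\mathrm{M}_{\omega_Z}[\dim Z - d + 1],
\]
a shift by $\dim Z-d+1\le -1$, not by $1$. (Incidentally, the defining triangle is $\pi^*ii^R\pi_*\to\id\to\tilde T_\ka$, not $\to\tau_*$.)

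Second, the support argument rests on a false premise: a fully faithful Fourier--Mukai embedding $\Db(Z)\hookrightarrow\Db(Y)$ need not land in objects supported in dimension $\le\dim Z$. The central example of the paper already illustrates this: by Theorem \ref{hilb-semiorth} and Corollary \ref{frsupport}, $F\colon\Db(S)\to\Db(S^{[n]})$ is fully faithful yet images of $F$ can have full support on $S^{[n]}$. So no support bound follows from $\dim Z$, and the contradiction you describe does not materialise. The detour through Proposition \ref{prop:Addingtongeneralised} also does not help here: the orthogonality it yields is never converted into an actual obstruction.

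The paper's route avoids both issues. It works directly with the explicit descent $T_\ka$ on $Y$ given by (\ref{coneofT}) and computes (\ref{Taction}); in particular, for an $\mathrm{M}_{\omega_Z}$-invariant $\alpha\in\Db(Z)$ (e.g.\ a skyscraper sheaf) one obtains $T_\ka i(\alpha)\cong\mathrm{M}_{\omega_Y}i(\alpha)[\dim Z-d+1]$, while $T_\ka$ acts as the identity on $\langle\ka,\ka\otimes\omega_Y\rangle^\perp$. Since $\dim Z-d+1\le -1$, the functor $T_\ka$ does not shift cohomology uniformly, whereas every standard autoequivalence has cohomological amplitude of the form $[c,c]$. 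This is precisely where the bound $\dim Z\le d-2$ enters; it has nothing to do with supports.
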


\begin{proof}
One of the two descents of $\tilde{T}_\ka$ is 
\begin{align}\label{coneofT} T_\ka:=\cone( i i^R\oplus \mathrm{M}_{\omega_Y}i i^R \mathrm{M}_{\omega_Y} \xrightarrow{c} \id)\end{align}
where the components of $c$ are given by the counits of the adjunctions. We also get  
\begin{align}\label{Taction}T_\ka  i\cong \mathrm{M}_{\omega_Y} i \ks_\ka[-d+1]\,,\quad T_\ka \mathrm{M}_{\omega_Y} i\cong i S_\ka[-d+1]\end{align} 
by Lemma \ref{serreadmissible}. In particular, for every $\mathrm{M}_{\omega_Z}$-invariant object $\alpha\in \D^b(Z)$, for example a skyscraper sheaf, $T_\ka i(\alpha)=\alpha[\dim Z-d+1]$.
Furthermore, $T_\ka$ acts as the identity on $\langle \ka,\ka\otimes \omega_Y\rangle ^\perp$.
Thus, $T_\ka$ has cohomological amplitude at least $[\dim Z-d+1,0]$. In contrast, the cohomological amplitude of a standard autoequivalence is of the form $[c,c]$ for some $c\in \IZ$. 
\end{proof}

\begin{remark}
By the previous proof we have a description of the restriction of $T_\ka$ to $\kc:=\ka\cup \ka\otimes \omega_Y \cup \langle\ka, \ka\otimes\omega_Y\rangle^\perp$. Note that is a spanning class (see \cite[Sect.\ 1.3]{Huybrechts} for details regarding this notion). Indeed, if $\Hom(\beta,\gamma)=0$ for all $\gamma\in \kc$, then, in particular, $\Hom(\beta,\alpha)=0=\Hom(\beta,\alpha\otimes\omega_Y)$ for all $\alpha\in \ka$. By Serre duality, $\beta\in\langle\ka, \ka\otimes\omega_Y\rangle^\perp\subset \kc$, hence $\beta\cong 0$. Similarly, one proves that $\Hom(\gamma,\beta)=0$ implies that $\beta\cong 0$. 
\end{remark}


\begin{remark}\label{twistrel}
As in the case of spherical or $\IP^n$-twists (see \cite[Lem.\ 2.3]{Krug1}), we have for any $\Psi\in \Aut(\Db(Y))$ the relation $\Psi T_\ka\cong T_{\Psi(\ka)}\Psi$. For this one uses the cone description of $T_\ka$ given by Equation (\ref{coneofT}) and the fact that the embedding functor of $\Psi(\ka)$ is $\Psi i$.
\end{remark}

\begin{remark}
More generally, $\pi^*$ is a $\IP^{n-1}$-functor if $Y$ has torsion canonical bundle of order $n\geq 2$. But the analogue of Proposition \ref{sphericaldeg2cover} does not hold for $n\ge 3$, that is, in general, for a fully faithful admissible embedding $i\colon \ka\to \Db(Y)$ the composition $\pi^*i$ is not a $\IP^{n-1}$-functor. The reason is that Lemma \ref{serreadmissible} does not generalise to powers of the Serre functor, that is, in general, $\ks^k_\ka\not\simeq i^R\ks^k_\kt i$ for $k\geq 2$.  
\end{remark}

\subsection{Application to Enriques surfaces}

In the following we want to apply Proposition \ref{sphericaldeg2cover} to the Hibert scheme of points on an Enriques surface and investigate the relation of the associated twist with some known autoequivalences. For this we first need the following statement. 

\begin{lemma}\label{rank}
Let $S$ be any surface and let $\xi\in S^{[n]}$ be a point representing $n$ pairwise distinct points on $S$. Then $FR(k(\xi))$ has  rank $\chi-2n$ where $\chi:=\chi(\omega_S)=\chi(\reg_S)$.
\end{lemma}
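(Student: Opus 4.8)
The plan is to compute $\rk FR(k(\xi))$ in two moves: first identify the object $R(k(\xi))=F^R(k(\xi))\in\Db(S)$, and then run it through $F$ by means of the structure triangle $F\to F'\to F''$. The useful preliminary step is a rank formula for $F$ itself. Since the kernel of $F'$ is $\ko_{S\times S^{[n]}}$, we have $F'(\kg)\cong\Ho^*(S,\kg)\otimes\ko_{S^{[n]}}$, so $\rk F'(\kg)=\chi(S,\kg)$. Since the kernel of $F''$ is $\ko_{\kz_n}$ and $\kz_n$ is a correspondence finite flat of degree $n$ over $S^{[n]}$, a projection-formula and base-change computation (whose only subtlety is the flatness of $\kz_n$ over $S^{[n]}$) shows that the general fibre of $F''(\kg)$ over $S^{[n]}$ is $\bigoplus_{i=1}^n\kg|_{x_i}$, hence $\rk F''(\kg)=n\cdot\rk\kg$. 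The triangle then gives $\rk F(\kg)=\chi(S,\kg)-n\cdot\rk\kg$ for every $\kg\in\Db(S)$.

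The substantial step is the computation of $R(k(\xi))$. The right adjoint $R=F^R$ is the Fourier--Mukai transform with kernel $\ki_{\kz_n}^\vee\otimes p_{S^{[n]}}^*\omega_{S^{[n]}}[2n]$, so I would apply base change along the inclusion $j\colon S\times\{\xi\}\to S\times S^{[n]}$ of the fibre. Because $p_S\circ j$ is an isomorphism and $\omega_{S^{[n]}}$ pulls back trivially to that fibre, this gives $R(k(\xi))\cong\bigl(Lj^*\ki_{\kz_n}\bigr)^\vee[2n]$. This is the point at which the hypothesis that $\xi$ represents $n$ distinct points is used: the universal family is flat over $S^{[n]}$, so $Lj^*\ko_{\kz_n}=\ko_{Z_\xi}$ has no higher Tor, where $Z_\xi=\{x_1,\dots,x_n\}\subset S$ is the corresponding reduced length-$n$ subscheme, and the structure sequence of $\kz_n$ then forces $Lj^*\ki_{\kz_n}=\ki_{Z_\xi}$. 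Thus $R(k(\xi))\cong\ki_{Z_\xi}^\vee[2n]$, and combining the structure sequence of $Z_\xi$ with the local-duality identity $\RHom_S(\ko_{Z_\xi},\ko_S)\cong\ko_{Z_\xi}[-2]$ yields $[\ki_{Z_\xi}^\vee]=[\ko_S]-[\ko_{Z_\xi}]$ in $\K_0(S)$; in particular $\rk R(k(\xi))=1$ and $\chi\bigl(S,R(k(\xi))\bigr)=\chi(\ko_S)-n=\chi-n$.

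Feeding $\kg=R(k(\xi))$ into the rank formula --- the even shift $[2n]$ being immaterial for ranks --- then gives
\[
\rk FR(k(\xi))=\chi\bigl(S,R(k(\xi))\bigr)-n\cdot\rk R(k(\xi))=(\chi-n)-n=\chi-2n,
\]
as claimed. The genuinely delicate point is the identification of $R(k(\xi))$: one must check that $Lj^*\ki_{\kz_n}$ is concentrated in degree zero (exactly where flatness of $\kz_n$ over $S^{[n]}$, hence the assumption on $\xi$, enters), that $Lj^*$ commutes with dualisation because $\ki_{\kz_n}$ is a perfect complex on the smooth variety $S\times S^{[n]}$, and that the canonical twist $p_{S^{[n]}}^*\omega_{S^{[n]}}$ drops out upon restriction to a fibre. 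After that the argument is a short additivity computation for rank and Euler characteristic in $\K_0(S)$.
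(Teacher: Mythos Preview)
Your argument is correct and reaches the same conclusion, but it is organised differently from the paper's proof. The paper decomposes \emph{both} $F$ and $R$ via the triangles $F\to F'\to F''$ and $R''\to R'\to R$, computes $R'(k(\xi))\cong\omega_S[2]$ and $R''(k(\xi))\cong\ko_{Z_\xi}$ separately, and then adds up the four ranks $\rk F'R'(\alpha)$, $\rk F'R''(\alpha)$, $\rk F''R'(\alpha)$, $\rk F''R''(\alpha)$ with signs. You instead keep $R$ intact, identify $R(k(\xi))$ as the single object $\ki_{Z_\xi}^\vee[2n]$ via base change along $S\times\{\xi\}\hookrightarrow S\times S^{[n]}$, and only then decompose $F$ through a clean rank formula $\rk F(\kg)=\chi(S,\kg)-n\cdot\rk\kg$. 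Your route has the advantage of producing an explicit description of $R(k(\xi))$ rather than only its $K$-class pieces; the paper's route stays within the $F',F'',R',R''$ bookkeeping already set up in Section~\ref{section:preliminaries} and avoids invoking compatibility of dualisation with derived pullback.

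One small correction on where the hypothesis enters: the flatness of the universal family $\kz_n$ over $S^{[n]}$ holds for \emph{every} $\xi$, not only those representing distinct points, so $Lj^*\ki_{\kz_n}\cong\ki_{Z_\xi}$ is automatic. The place where reducedness of $Z_\xi$ is genuinely used in your write-up is the identity $\RHom_S(\ko_{Z_\xi},\ko_S)\cong\ko_{Z_\xi}[-2]$; for a non-reduced length-$n$ subscheme one only gets $\omega_{Z_\xi}[-2]$. Since you ultimately need only the $K$-class (and $\omega_{Z_\xi}$ has the same length as $\ko_{Z_\xi}$), your final computation in fact goes through for arbitrary $\xi$, but as the lemma is stated for reduced $\xi$ this is harmless.
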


\begin{proof}
We will follow the computations in \cite[Lem.\ 5.7]{Krug1}.
So let $\xi \in S^{[n]}$ correspond to $n$ pairwise distinct points and write $\alpha=k(\xi)$. Recall that $F'$ has kernel $\ko_{S\times S^{[n]}}$ and $F''$ has kernel $\ko_{\kz_n}$. The kernels of the right adjoints $R'$ and $R''$ are then given by $p_S^*\omega_S[2]$ and $\ko_{\kz_n}^\vee\otimes p_S^*\omega_S[2]$, respectively. Hence, $R'(\alpha)=\omega_S[2]$ and $R''(\alpha)\cong \ko_\xi\otimes\omega_S\cong\ko_\xi$, since outside the locus of points representing non-reduced subschemes of $S$ $\ko_{\kz_n}^\vee$ is a line bundle shifted into degree 2. We have $F'=\Ho^*(-)\otimes \reg_S$. It follows that $\rk F'(\omega_S)=\chi$ and $\rk F'(\reg_\xi)=n$. Since $\kz_n$ is flat and finite of degree $n$ over $S^{[n]}$, we have 
$\rk F''(\omega_S)=n$ and $\rk F''(\reg_\xi)=0$. Using that the rank is compatible with exact triangles, we get
\begin{align*}
 \rk FR(\alpha)&=\rk F'R'(\alpha)-\rk F'R''(\alpha)-\rk F''R'(\alpha)+\rk F''R''(\alpha)\\&=\chi -n-n+0=\chi-2n.
\end{align*}
\end{proof}

\begin{corollary}\label{frsupport}
Let $S$ be a surface with $p_g=q=0$. Then the support of $FR(k(\xi))$ is $S^{[n]}$.\qqed
\end{corollary}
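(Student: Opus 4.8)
The plan is to deduce Corollary \ref{frsupport} directly from Lemma \ref{rank} by exploiting the semicontinuity of rank and the special structure of $FR$ as a Fourier--Mukai transform. First I would note that when $p_g=q=0$ we have $\chi(\reg_S)=1$, so Lemma \ref{rank} gives that $FR(k(\xi))$ has rank $1-2n$ for every point $\xi\in S^{[n]}$ representing $n$ pairwise distinct points on $S$. Since $n\ge 1$ (indeed $n\ge 2$ in the situations of interest), this rank is nonzero, so $\supp(FR(k(\xi)))=S^{[n]}$ for every such $\xi$, because a complex of sheaves on the irreducible variety $S^{[n]}$ whose class in the Grothendieck group has nonzero rank cannot be supported on a proper closed subset.

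Next I would observe that the locus $U\subset S^{[n]}$ of subschemes consisting of $n$ distinct reduced points is a dense open subset. Hence $FR(k(\xi))$ has full support for all $\xi$ in a dense open set, and a fortiori $\supp(FR(k(\xi)))$ for such $\xi$ already exhausts $S^{[n]}$. To get the statement literally as written — that the support of $FR(k(\xi))$ \emph{is} $S^{[n]}$ — one only needs a single such $\xi$, which exists since $n\ge 1$ and $S$ is a surface (it even has infinitely many points). So in fact the corollary is an immediate consequence: pick any $\xi$ representing $n$ distinct points, apply Lemma \ref{rank} with $\chi=1$, conclude the rank is $1-2n\neq 0$, and therefore the support is all of the (irreducible) Hilbert scheme $S^{[n]}$.

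There is essentially no obstacle here; the only mild subtlety is making sure one is allowed to speak of the ``rank'' of an object of $\Db(S^{[n]})$ and to conclude full support from nonvanishing rank. This is fine because $S^{[n]}$ is smooth and irreducible (it is connected of dimension $2n$), so the generic point has a well-defined residue field, the rank of a bounded complex is the alternating sum of the generic ranks of its cohomology sheaves, and if this alternating sum is nonzero then at least one cohomology sheaf is nonzero at the generic point, forcing its support — and hence that of the whole complex — to be all of $S^{[n]}$. Thus the proof is a one-line specialisation of Lemma \ref{rank}, which is presumably why the statement is recorded as a corollary with a box rather than a separate proof.

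If one wanted to phrase it even more robustly, one could remark that the conclusion does not depend on the choice of $\xi$ in the relevant locus, and that the same argument shows $FR(k(\xi))$ is genuinely a complex with nonzero, in fact negative, Euler-type rank invariant, which already rules out $FR$ being, say, a shift of an equivalence or having small image — a point the authors exploit later when comparing $\tilde T$ with standard autoequivalences.
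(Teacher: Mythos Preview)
Your proposal is correct and takes essentially the same approach as the paper: the corollary is recorded with only a \qqed because it is an immediate consequence of Lemma \ref{rank}, specialising $\chi=1$ when $p_g=q=0$ to get rank $1-2n\neq 0$ and hence full support on the irreducible variety $S^{[n]}$. Your discussion of the dense open locus and of how to interpret rank for complexes is accurate but more than the paper spells out; the essential content is the same one-line deduction you identify.
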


\begin{thm}\label{maintheorem}
Let $X$ be an Enriques surface, $F\colon \Db(X)\to \Db(X^{[n]})$ the FM-transform induced by the ideal sheaf of the universal family, $\mathrm{CY}_n$ the canonical cover of $X^{[n]}$ and $\ka\subset \Db(X)$ an admissible subcategory with embedding functor $i$. Then $\pi^*Fi$ is a split spherical functor whose induced twist is equivariant for all $n\geq 2$.

If $\ka=\Db(X)$, then the twist $\widetilde{T}=\tilde{T}_{\Db(X)}$ associated to $\widetilde{F}=\pi^*F$ is a non standard autoequivalence of $\mathrm{CY}_n$. Furthermore, it is not a spherical twist.
\end{thm}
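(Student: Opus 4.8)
The plan is to deduce Theorem \ref{maintheorem} by combining the earlier structural results. First I would verify the hypotheses of Proposition \ref{sphericaldeg2cover}: by Theorem \ref{hilb-semiorth} the functor $F\colon\Db(X)\to\Db(X^{[n]})$ is fully faithful (here $p_g=q=0$ for an Enriques surface), so $F(\Db(X))$, and more generally $Fi(\ka)$ for any admissible $\ka\subset\Db(X)$, is an admissible subcategory of $\Db(X^{[n]})$. Since $\omega_{X^{[n]}}$ has order $2$ and $\mathrm{CY}_n$ is its canonical cover (recalled in Subsection on canonical covers), Proposition \ref{sphericaldeg2cover} applies verbatim and gives that $\pi^*Fi$ is split spherical with equivariant twist $\tilde T_\ka$, for all $n\ge 2$. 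This settles the first paragraph of the statement.

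For the second paragraph, take $\ka=\Db(X)$, so $i=\id$ and $\tilde T=\tilde T_{\Db(X)}$ is the twist along $\pi^*F$. To show $\tilde T$ is non-standard it suffices to show one of its descents $T_{\Db(X)}$ to $\Db(X^{[n]})$ is non-standard, since standard autoequivalences of $X^{[n]}$ lift to standard autoequivalences of $\mathrm{CY}_n$ and conversely a standard autoequivalence of $\mathrm{CY}_n$ would, by the descent uniqueness recalled in Subsection \ref{ccequi}, have a standard descent up to $\omega_{X^{[n]}}$-twist. Here I cannot quite invoke Proposition \ref{descentdesc} directly, because that proposition assumes $\ka\cong\Db(Z)$ with $\dim Z\le d-2$ and here $\dim X=2$ while $d=\dim X^{[n]}=2n$, so the dimension bound $2\le 2n-2$ does hold for $n\ge 2$ — but $\ka$ here is $F(\Db(X))$, not literally $\Db(X)$ embedded in the standard way, so one must check the argument of Proposition \ref{descentdesc} still goes through. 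The key points to re-run are: (i) by Equation (\ref{Taction}) applied to this $\ka$, for an $\mathrm{M}_{\omega_X}$-invariant object $\alpha$ — e.g. a skyscraper $k(\xi)$ — one has $T_{\Db(X)}F(\alpha)\cong F\alpha[\dim X-d+1]=F(k(\xi))[3-2n]$; (ii) by Corollary \ref{frsupport} the object $FR(k(\xi))$, equivalently $Fi i^R(k(\xi))$ up to the $\omega$-twisted summand, is supported on all of $X^{[n]}$, so the shift it receives is a genuine shift of a complex with full support, not annihilated; (iii) the condition $\langle\Db(X),\Db(X)\otimes\omega_{X^{[n]}}\rangle^\perp\neq 0$ must be checked so that $T_{\Db(X)}$ acts as the identity there, giving cohomological amplitude strictly larger than $[c,c]$; non-vanishing of this orthogonal complement follows because $F$ is far from essentially surjective (e.g. $\dim X^{[n]}=2n>2$ forces the admissible subcategory $F(\Db(X))\oplus$-type piece to be a proper summand, and one can exhibit explicit objects, such as structure sheaves of sufficiently positive line bundles on $X^{[n]}$ or skyscrapers, lying in the orthogonal complement). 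The contradiction with the $[c,c]$-amplitude of standard autoequivalences then shows $T_{\Db(X)}$, hence $\tilde T$, is non-standard.

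To show $\tilde T$ is not a spherical twist, I would use Proposition \ref{prop:Addingtongeneralised}: if $\tilde T=\mathrm{ST}_E$ for a spherical object $E\in\Db(\mathrm{CY}_n)$, then $\tilde T$ acts on $E^\perp$ as the identity and shifts $E$ by $[1-d]=[1-2n]$, so $E$ is a nonzero object with $\tilde T(E)\cong E[1-2n]$ while $\tilde T\tilde F\cong \tilde F\,\ks_{\Db(X)}[1-2n]$ by Equation (\ref{Taction}) pulled up via $\pi^*$ — but $\ks_{\Db(X)}=(-)\otimes\omega_X[2]$ is a non-shifted-times-shift standard autoequivalence, and one checks $\tilde T\tilde F$ is \emph{not} of the form $(\text{shift})\circ\tilde F$ with the \emph{same} shift $[1-2n]$ as on $E$ unless $\omega_X\cong\ko_X$, which fails for an Enriques surface. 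More precisely, apply Proposition \ref{prop:Addingtongeneralised} with $T=\tilde T$, $\Psi=(-)\otimes\omega_X\in\Pic(X)\rtimes\Aut(X)$, $\Phi_1=\Phi_2=\id$, $i=1-2n=j$ — wait, here the proposition needs $i\neq j$, and indeed the shift on $\tilde F$ coming from $\ks_{\Db(X)}$ carries an extra internal $[2]$, so the relevant integers differ; with $\alpha=E$ one concludes $\tilde F(\beta)\in E^\perp$ and $E\in\tilde F(\beta)^\perp$ for all $\beta$. But then $\mathrm{ST}_E$ acts as the identity on the whole image of $\tilde F$, contradicting $\tilde T\tilde F\cong\tilde F\ks_{\Db(X)}[1-2n]$ which is a nontrivial shift (and twist) of $\tilde F$ — since $\ks_{\Db(X)}[1-2n]\neq\id$. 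Hence $\tilde T$ is not a spherical twist.

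The main obstacle I expect is packaging step (ii) above carefully: matching the cone description of $T_{\Db(X)}$ from Equation (\ref{coneofT}) with the object $FR(k(\xi))$ of Lemma \ref{rank} and Corollary \ref{frsupport}, and confirming that the contribution of the $\mathrm{M}_{\omega_{X^{[n]}}}$-twisted summand $\mathrm{M}_{\omega_{X^{[n]}}}FF^R\mathrm{M}_{\omega_{X^{[n]}}}$ does not conspire to cancel the full-support piece — i.e. genuinely establishing that $T_{\Db(X)}i(k(\xi))$ has full support and lives in the single degree $3-2n$, so that together with $T_{\Db(X)}=\id$ on the nonzero orthogonal complement the cohomological amplitude is strictly bigger than $[c,c]$. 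Everything else is a direct invocation of Proposition \ref{sphericaldeg2cover}, Proposition \ref{descentdesc} (suitably re-read for $\ka=F(\Db(X))$), Proposition \ref{prop:Addingtongeneralised}, and the descent formalism of Subsection \ref{ccequi}.
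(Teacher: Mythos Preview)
Your first paragraph and your treatment of ``not a spherical twist'' match the paper's argument: the first is exactly the invocation of Proposition~\ref{sphericaldeg2cover} together with Theorem~\ref{hilb-semiorth}, and the last is the same use of Proposition~\ref{prop:Addingtongeneralised} (with $i=1-2n$ for the spherical object $A$ versus $j=3-2n$ for $\tilde T\tilde F\cong \tilde F\,\mathrm{M}_{\omega_X}[3-2n]$), yielding $\tilde F(\beta)\in A^\perp$ for all $\beta$, which contradicts $\tilde T\tilde F\not\cong \tilde F$.

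For ``$\tilde T$ is non-standard'', however, the paper does \emph{not} go through Proposition~\ref{descentdesc} or a cohomological-amplitude argument on a descent. It argues directly on $\mathrm{CY}_n$ via supports: for a point $\tilde\xi\in\mathrm{CY}_n$ lying over a reduced $\xi\in X^{[n]}$, one has $\tilde F\tilde R(k(\tilde\xi))\cong \pi^*FR(k(\xi))$, which by Corollary~\ref{frsupport} is supported on all of $\mathrm{CY}_n$. The defining triangle $\tilde F\tilde R(k(\tilde\xi))\to k(\tilde\xi)\to \tilde T(k(\tilde\xi))$ together with $\supp(\alpha)\subset\supp(\beta)\cup\supp(\gamma)$ then forces $\tilde T(k(\tilde\xi))$ to have full support, so $\tilde T$ changes the dimension of the support and cannot be standard.

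Your route instead needs $\langle F(\Db(X)),\,F(\Db(X))\otimes\omega_{X^{[n]}}\rangle^\perp\neq 0$, and this is precisely the point where there is a real gap. Note that $b$ lies in this orthogonal iff $R(b)=0$ and $R(b\otimes\omega_{X^{[n]}}^{-1})=0$; the paper itself remarks (Remark~\ref{kerR}) that exhibiting a nonzero object in $\ker\tilde R$ --- essentially the same problem --- ``seems to be difficult''. In particular your proposed witnesses fail: a skyscraper $k(\xi)$ on $X^{[n]}$ is \emph{not} in $F(\Db(X))^\perp$, since $R(k(\xi))\neq 0$ (indeed $FR(k(\xi))$ has full support by Corollary~\ref{frsupport}), and there is no evident reason why a sufficiently positive line bundle should be. So your amplitude argument, as written, does not close; the paper's support argument sidesteps the issue entirely by never needing an object on which $\tilde T$ acts trivially.
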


\begin{proof}
The first part is just an application of Proposition \ref{sphericaldeg2cover} together with Theorem \ref{hilb-semiorth}, so let us investigate $\tilde{T}$.

By Corollary \ref{frsupport}, $FR(k(\xi))$ is supported on $X^{[n]}$, hence the same holds for the image under $\tilde{F}\tilde{R}$ of a skyscraper sheaf of a point on $\mathrm{CY}_n$ mapping to $\xi$. Using the triangle defining $\tilde{T}$ and the fact that for any triangle $\alpha\to \beta\to \gamma$ in $\Db(Z)$, we have $\supp(\alpha)\subset \supp(\beta)\cup\supp(\gamma)$, we conclude that $\tilde{T}$ does not respect the dimension of the support. Therefore, $\tilde{T}$ cannot be a standard autoequivalence. 

Now, if $A$ is a spherical object in $\mathrm{CY}_n$, then the spherical twist $\mathrm{ST}_A$ acts as identity on $A^\bot$ and $\mathrm{ST}_A(A)\cong A[1-2n]$. The discussion on p.\ 9 in \cite{Addington} shows that $\widetilde{T}\widetilde{F}(\alpha)\cong\widetilde{F}C[1]\cong\widetilde{F}\mathrm{M}_{\omega_X}[3-2n]$. If $\widetilde{T}$ were isomorphic to $\mathrm{ST}_A$ for some $A$, then, by Proposition \ref{prop:Addingtongeneralised}, $\widetilde{F}(\alpha)$ would have to be orthogonal to the spanning class $A\cup A^\bot$ (see \cite[Prop.\ 8.6]{Huybrechts} for details about this spanning class), hence zero for all $\alpha\in \Db(X)$. But clearly $\widetilde{F}$ is not trivial, a contradiction.
\end{proof}

\begin{remark}\label{kerR}
The above shows that there are objects which $\tilde{T}$ shifts by $[3-2n]$. On the other hand, we know that $\tilde{T}$ acts as identity on $\ker\widetilde{R}$. However, finding a non-trivial element in $\ker\widetilde{R}$ seems to be difficult.
\end{remark}

\begin{remark}
The functor $\widehat{F}\colon \Db(\tilde{X})\to \Db(X^{[n]})$ defined as $F \pi_{X*}$ is not spherical. Note that $\pi_X^!=\pi_X^*$ in this case, so $\widehat{R} \widehat{F}\cong \pi_X^* \pi_{X*}\cong \id\oplus \tau_X^*$ and $C\cong \tau_X^*$ is an autoequivalence of $\Db(\tilde{X})$. But the condition $\widehat{F} \ks_{\tilde{X}}\cong \ks_{X^{[n]}} \widehat{F} C$ is not satisfied. Indeed, $\widehat{F} \ks_{\tilde{X}}(\alpha)\cong F(\pi_*\alpha)$ and $\ks_{X^{[n]}} \widehat{F} C$ are non isomorphic objects for $\alpha\in \D^b(\tilde X)$ since their non-vanishing cohomologies lie in different degrees.  

Furthermore, the functor $\overline{F}\colon \Db(\tilde{X})\to \Db(\mathrm{CY}_n)$ defined as $\pi^* F \pi_{X*}$ is also not spherical. Indeed, $\overline{R}=\pi_X^* R\pi_*$, hence 
\begin{align*}
\overline{R}\,\overline{F}&\cong \pi_X^*\tilde{R}\tilde{F}\pi_{X*}\cong \pi_X^*\pi_{X*}\oplus \pi_X^*\pi_{X*}[2-2n]\\
&\cong \id\oplus \tau_X^*\oplus[2-2n]\oplus \tau_X^*[2-2n].
\end{align*}
\end{remark}

We can say something about the autoequivalences on $X^{[n]}$ the twist $\tilde{T}$ descends to.

\begin{proposition}
Let $T'\in \Aut(\Db(X^{[n]}))$ be a descent of $\tilde{T}$. Then $T'$ is not contained in the subgroup of $\Aut(\Db(X^{[n]}))$ generated by $\Aut^{\rm{st}}(\Db(X^{[n]}))$ and the equivalence $P$ arising from the $\IP^{n-1}$-functor constructed in \cite{Krug1}. 
\end{proposition}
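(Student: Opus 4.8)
The plan is to argue by contradiction. Since two descents of $\tilde T$ differ only by a power of $\mathrm{M}_{\omega_{X^{[n]}}}$, which is standard, it is enough to treat $T'=T_{\Db(X)}$ and to assume it lies in the subgroup $\mathcal{G}\subseteq\Aut(\Db(X^{[n]}))$ generated by $\Aut^{\mathrm{st}}(\Db(X^{[n]}))$ and $P=P_G$, where $G\colon\Db(X)\to\Db(X^{[n]})$ denotes the $\IP^{n-1}$-functor of \cite{Krug1}. First I would record the behaviour of $T'$ supplied by Proposition~\ref{descentdesc} and the remark following it: $T'$ is the identity on $\mathcal{N}:=\langle F(\Db(X)),\,F(\Db(X))\otimes\omega_{X^{[n]}}\rangle^{\perp}$, while $T'F\cong\mathrm{M}_{\omega_{X^{[n]}}}F\,\mathrm{M}_{\omega_X}[3-2n]$ by~(\ref{Taction}), with the analogous statement on $F(\Db(X))\otimes\omega_{X^{[n]}}$. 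Thus the non-identity behaviour of $T'$ is concentrated on the admissible subcategory $F(\Db(X))$, and together with Corollary~\ref{frsupport} this reproves $T'\notin\Aut^{\mathrm{st}}$ exactly as in the proof of Theorem~\ref{maintheorem}.

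Next I would write down the corresponding data for $P$. By the $\IP^{n-1}$-functor axioms, $P$ is the identity on $\ker G^{R}=G(\Db(X))^{\perp}$, and on the complementary subcategory it satisfies $PG\cong G\,\Phi_0[c_n]$ for a non-shifted standard autoequivalence $\Phi_0$ of $\Db(X)$ and some $c_n\in\IZ$; so the non-identity behaviour of $P$ is concentrated on $G(\Db(X))$. The decisive input is that $F(\Db(X))$ and $G(\Db(X))$ are genuinely different admissible subcategories---$F$ is fully faithful but, by the remark after Theorem~\ref{hilb-semiorth}, far from a $\IP$-functor, whereas $G$ is a $\IP^{n-1}$-functor---and this difference is already visible numerically: the rank-$12$ images $F_{*}K_{\mathrm{num}}(X)$ and $G_{*}K_{\mathrm{num}}(X)$ inside $K_{\mathrm{num}}(X^{[n]})$ are not related by a standard isometry, as one checks by comparing the two Fourier--Mukai kernels. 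I would then pass to the induced action on $K_{\mathrm{num}}(X^{[n]})$, or equivalently on $\HH_{*}(X^{[n]},\IC)$: $\Aut^{\mathrm{st}}$ acts through the usual isometries, $P_{*}$ is the identity on $K_{\mathrm{num}}(\ker G^{R})$ and a quasi-reflection along $G_{*}K_{\mathrm{num}}(X)$, while $T'_{*}$ is the identity on $K_{\mathrm{num}}(\mathcal{N})$ and acts non-trivially along $F_{*}K_{\mathrm{num}}(X)$ and its $\omega_{X^{[n]}}$-twist; the aim is to show these profiles cannot coincide.

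The hard part is the group theory: ruling out all words in $\Aut^{\mathrm{st}}$ and $P^{\pm1}$ simultaneously. I would try to prove, by induction on word length, that for every $\Psi\in\mathcal{G}$ the subspace of $K_{\mathrm{num}}(X^{[n]})_{\IQ}$ on which $\Psi_{*}$ acts non-trivially is contained in the span of the $\Aut^{\mathrm{st}}$-orbit of $G_{*}K_{\mathrm{num}}(X)$---the inductive step using that a standard isometry conjugates $P_{*}$ into the quasi-reflection along the corresponding translate of $G_{*}K_{\mathrm{num}}(X)$, so that composition never leaves this $\Aut^{\mathrm{st}}$-stable family---and then to note that $F_{*}K_{\mathrm{num}}(X)$ is not contained in that span, contradicting the description of $T'_{*}$ above. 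Should the lattice bookkeeping prove awkward, a fallback is to run the same dichotomy directly on $\HH_{*}(X^{[n]},\IC)$, where the class of a point and the effect of shifts are most transparent, and to invoke Proposition~\ref{prop:Addingtongeneralised} to compare the fixed loci of $T'$ and of any word in $\mathcal{G}$ purporting to equal it.
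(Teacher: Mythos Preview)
Your inductive scheme on $K_{\mathrm{num}}(X^{[n]})_{\IQ}$ does not get off the ground. The claim that for every $\Psi\in\mathcal{G}$ the locus where $\Psi_{*}$ acts non-trivially is contained in the span of the $\Aut^{\mathrm{st}}$-orbit of $G_{*}K_{\mathrm{num}}(X)$ already fails for $\Psi$ a non-trivial standard autoequivalence: $[1]$ acts by $-1$ on all of $K_{\mathrm{num}}$, and tensoring by a non-trivial line bundle moves almost every class. So the induction hypothesis is false for words of length~$1$ in $\mathcal{G}$. Even if you try to repair this by working ``up to a global sign'' or ``up to a standard isometry'', composing a standard isometry with a quasi-reflection with another standard isometry in a general word does not preserve any containment of the kind you want; the group generated by a family of reflections together with an isometry group is typically far larger than the reflections themselves. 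On top of this, the assertion that $F_{*}K_{\mathrm{num}}(X)$ lies outside the span of the $\Aut^{\mathrm{st}}$-orbit of $G_{*}K_{\mathrm{num}}(X)$ is stated without argument and looks hard. (A smaller slip: $G$ is a $\IP^{n-1}$-functor, not fully faithful, so the identification $\ker G^{R}=G(\Db(X))^{\perp}$ is not available; the true statement, which you use, is just that $P$ is the identity on $\ker G^{R}$.)

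The paper sidesteps all of this with a one-line rank argument. The key input, \cite[Rem.~4.7]{Krug1}, is that every object in the image of the $\IP^{n-1}$-functor is supported on a proper closed subset of $X^{[n]}$; hence $GHG^{R}(\alpha)$ and $GG^{R}(\alpha)$ have rank~$0$ for every $\alpha$, and the double-cone defining $P$ shows that $P$ preserves rank. Standard autoequivalences preserve rank up to a global sign, so every element of $\mathcal{G}$ preserves rank up to sign. But by Lemma~\ref{rank} and (\ref{coneofT}), the descent $T=T_{\Db(X)}$ sends the rank-$0$ object $k(\xi)$ to an object of rank $4n-2\neq 0$; the other descent differs by $\mathrm{M}_{\omega_{X^{[n]}}}$ and the same conclusion holds. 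This is exactly the kind of coarse numerical invariant you were reaching for, but rank alone suffices and no induction on word length is needed.
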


\begin{proof}
The twist $P$ is rank-preserving since all the objects in the image of the corresponding $\IP^{n-1}$-functor are supported on a proper subset of $X^{[n]}$; see \cite[Rem.\ 4.7]{Krug1}. The same holds for every standard autoequivalence (up to the sign -1 occurring for odd shifts). But by 
Lemma \ref{rank} and Equation (\ref{coneofT}) we have $\rk(T(k(\xi))=4n-2$, where $T=T_{\Db(X)}$. Since the other descent is given by $\mathrm{M}_{\omega_{X^{[n]}}}T$, the claim follows.   
\end{proof}

\begin{lemma}\label{notwist}
There is no $0\neq \alpha\in \Db(X^{[n]})$ such that a descent $T'$ of $\tilde{T}$ satisfies $T'(\alpha)=\alpha[\ell]$ or $T'(\alpha)=\alpha\otimes \omega_{X^{[n]}}[\ell]$ for $\ell\notin\{0,3-2n\}$. There is also no $0\neq\tilde \alpha\in \Db(\mathrm{CY}_n)$ such that $\tilde T(\tilde \alpha)=\tilde \alpha[\ell]$ for $\ell\notin\{0,3-2n\}$.
\end{lemma}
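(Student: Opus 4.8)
The plan is to argue by contradiction, exploiting the "cohomological amplitude" technology of Proposition~\ref{prop:Addingtongeneralised} together with the explicit description of how $T_\ka$ (respectively $\tilde T$) acts on the spanning class $\kc = \ka \cup \ka\otimes\omega_X \cup \langle\ka,\ka\otimes\omega_X\rangle^\perp$ obtained in the proof of Proposition~\ref{descentdesc} and the remark following it. Here $\ka=\Db(X)$, so $\langle\ka,\ka\otimes\omega_X\rangle^\perp = \langle F(\Db(X)),\mathrm{M}_{\omega_{X^{[n]}}}F(\Db(X))\rangle^\perp$, and by Equation~(\ref{Taction}) the twist $T'$ acts on $F(\beta)$ and on $\mathrm{M}_{\omega_{X^{[n]}}}F(\beta)$ as $\mathrm{M}_{\omega_{X^{[n]}}}F\ks_{\Db(X)}[1-2n]$ and $F\ks_{\Db(X)}[1-2n]$ respectively, while acting as the identity on $\langle\ka,\ka\otimes\omega_X\rangle^\perp$.

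First I would treat the statement on $\mathrm{CY}_n$, since that is the primitive case from which the $X^{[n]}$ statements descend. Suppose $0\neq\tilde\alpha\in\Db(\mathrm{CY}_n)$ satisfies $\tilde T(\tilde\alpha)\cong\tilde\alpha[\ell]$ with $\ell\notin\{0,3-2n\}$. Decompose $\tilde\alpha$ (or rather use that $\kc$ is a spanning class) to locate a component of $\tilde\alpha$ lying in one of the three pieces of $\kc$: if $\tilde\alpha$ has nonzero $\Hom$ into or out of some object of $\langle\ka,\ka\otimes\omega_X\rangle^\perp$ then that forces $\ell=0$ by the identity action there; if instead $\tilde\alpha$ pairs nontrivially with $\tilde F(\beta)=\pi^*F(\beta)$ for some $\beta$, then applying $\tilde T^m$ to the $\Hom$-space and using $\tilde T^m\tilde F\cong \pi^*\mathrm{M}_{\omega_X}^{?}F\ks^m[(1-2n)m]$ together with the finite cohomological amplitude of $\tilde F^L$ (as in the proof of Proposition~\ref{prop:Addingtongeneralised}), one gets that $\Hom(\tilde\alpha,\tilde F(\beta)[k])$ is both independent of $m$ up to the shift $m\ell$ and eventually vanishing unless $\ell = 1-2n$... here one must be careful: the relevant exponent coming from the $\tilde F$ side is $1-2n$, but the twist defining $\tilde T$ on the $\langle\ka,\ka\otimes\omega_X\rangle^\perp$-orthogonal uses $C[1]=\mathrm{M}_{\omega_X}[3-2n]$, which is why $3-2n$ (not $1-2n$) appears in the excluded set. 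I would reconcile this by working with the two descriptions in~(\ref{Taction}) and~(\ref{coneofT}): $\tilde T\tilde F\cong\tilde F C[1]\cong \tilde F\mathrm{M}_{\omega_X}[3-2n]$ on objects, and since $\mathrm{M}_{\omega_X}$ has amplitude $[0,0]$, a nonzero $\Hom(\tilde\alpha,\tilde F(\beta)[k])$ forces, after iterating $\tilde T^m$ and adjoining, vanishing for $m\gg0$ unless $\ell = 3-2n$. Combining the three cases yields $\ell\in\{0,3-2n\}$, a contradiction, so no such $\tilde\alpha$ exists.

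For the statements on $X^{[n]}$, I would deduce them from the $\mathrm{CY}_n$ case using the relations $\pi^*T' \cong \tilde T\pi^*$ and $\pi_*\tilde T\cong T'\pi_*$ recalled in Subsection~\ref{ccequi}. If $T'(\alpha)\cong\alpha[\ell]$ (or $\alpha\otimes\omega_{X^{[n]}}[\ell]$), pull back along $\pi^*$: since $\pi^*\omega_{X^{[n]}}\cong\ko_{\mathrm{CY}_n}$, in both cases we get $\tilde T(\pi^*\alpha)\cong\pi^*\alpha[\ell]$, and $\pi^*\alpha\neq0$ because $\pi$ is faithfully flat (so $\pi_*\pi^*\alpha\cong\alpha\oplus(\alpha\otimes\omega_{X^{[n]}})$ is nonzero). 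The already-established $\mathrm{CY}_n$ statement then forces $\ell\in\{0,3-2n\}$, proving both $X^{[n]}$ assertions simultaneously.

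The main obstacle, I expect, is the bookkeeping in the middle paragraph: getting the two "preferred" shifts to match up. The action of $\tilde T$ is genuinely different on the image of $\tilde F$ (shift essentially governed by the Serre functor of $\Db(X)$, which on a surface with $p_g=q=0$ and $\omega_X$ of order two sends $k(\xi)$ to $k(\xi)[2]$, hence the net $3-2n$ after the $[1]$ and the $\omega_X$-twist) versus on the orthogonal complement (identity, shift $0$), and one has to make sure no other eigenvalue-shift can sneak in — this is exactly where one invokes that $\kc$ is a spanning class so that any $\tilde\alpha$ fixed up to shift must "see" one of the two regimes, and that the cohomological-amplitude argument of Proposition~\ref{prop:Addingtongeneralised} rigidly pins down the shift. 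A secondary point to handle carefully is that $\tilde\alpha$ need not itself lie in $\kc$; one only knows $\Hom$-pairings with $\kc$ are nonzero, so the argument must be phrased entirely in terms of those $\Hom$-spaces and their behaviour under $\tilde T^m$, not in terms of a direct-sum decomposition of $\tilde\alpha$.
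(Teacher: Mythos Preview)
Your approach is correct and follows essentially the same strategy as the paper's proof: apply Proposition~\ref{prop:Addingtongeneralised} twice --- once for the image of the ideal functor (shift $3-2n$, coming from $\tilde T\tilde F\cong\tilde F\mathrm{M}_{\omega_X}[3-2n]$), once for the embedding of $\langle\ka,\ka\otimes\omega_{X^{[n]}}\rangle^\perp$ (shift $0$, identity action) --- to conclude that the hypothetical $\alpha$ is orthogonal to the spanning class $\kc$, hence zero.

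The only real difference is the order in which the two statements are handled. The paper works on $X^{[n]}$ first: it fixes the specific descent $T=T_{\Db(X)}$ from (\ref{coneofT}) (noting the other descent is $\mathrm{M}_{\omega_{X^{[n]}}}T$, which also absorbs the $\alpha\otimes\omega_{X^{[n]}}[\ell]$ case), runs the two applications of Proposition~\ref{prop:Addingtongeneralised} there using (\ref{Taction}) directly, and then deduces the $\mathrm{CY}_n$ statement in one line by applying $\pi_*$ to a hypothetical $\tilde\alpha$. Your route goes the other way --- prove it on $\mathrm{CY}_n$ first, then pull back via $\pi^*$ to get both $X^{[n]}$ statements at once. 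This works because $\pi^*$ is faithful and $\pi^*(\kc)$ is still a spanning class on $\mathrm{CY}_n$, but the paper's order is marginally more economical: the cone description and the identities (\ref{Taction}) live on $X^{[n]}$, so no transport of the spanning class or of those formulas is required. Either direction buys you the other essentially for free.
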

\begin{proof}
It is sufficient to consider the descent $T$ described in Proposition \ref{descentdesc}. Indeed, the other descent is given by $\mathrm{M}_{\omega_{X^{[n]}}}T$. 
 
Let $\alpha\in \D^b(X^{[n]})$ with $T(\alpha)=\alpha[\ell]$, $\ell\notin\{0,3-2n\}$. 
By Equation (\ref{Taction}) and Proposition \ref{prop:Addingtongeneralised} (here we use that $l\neq 3-2n$) we get that $\alpha\in F(\beta)^\perp$ and $F(\beta)\in \alpha^\perp$ for all $\beta\in \Db(X)$. Hence $\alpha\in \ka^\perp$ and $\alpha\in {}^\bot\ka$, where $\ka=F(\Db(X))$. By Serre duality also $\alpha\in (\ka\otimes \omega_{X^{[n]}})^\perp$ and ${}^\bot(\ka\otimes \omega_{X^{[n]}})$. 

The object $\alpha$ is also orthogonal to $\langle \ka,\ka\otimes \omega_{X^{[n]}}\rangle^\perp$ on which $T$ acts trivially. 
To see this, use again Proposition \ref{prop:Addingtongeneralised}, this time with $F$ being the embedding of the subcategory $\langle \ka,\ka\otimes \omega_{X^{[n]}}\rangle^\perp$. 

Therefore, $\alpha$ is orthogonal to the spanning class $\ka\cup \ka\otimes\omega_{X^{[n]}}\cup \langle \ka,\ka\otimes \omega_{X^{[n]}}\rangle^\perp$, hence zero. The proof in the case that $T(\alpha)=\alpha\otimes \omega_{X^{[n]}}[\ell]$ is similar and the statement about $\tilde{\alpha}$ follows by applying $\pi_*$. 
\end{proof}

Next, we want to compare our equivalences to the ones constructed in \cite{Ploog-Sosna}. For this we need to recall some facts which will also be useful in the next section.

Let $Z$ be a smooth projective variety and $n\ge 2$. We consider the cartesian power $Z^n$ equipped with the natural $\sym_n$-action given by permuting the factors.

For $E\in \Db(Z)$ an exceptional object, the box product $E^{\boxtimes n}$ is again exceptional, since by Künneth formula
\[
 \Ext^*_{\Db_{\sym_n}(Z^n)}(E^{\boxtimes n},E^{\boxtimes n})\cong \Ext^*_{X^n}(E^{\boxtimes n},E^{\boxtimes n})^{\sym_n}\cong S^n\Ext^*_X(E,E)\cong \IC[0]. 
\]
More generally, for $\rho$ an irreducible representation of $\sym_n$, the object $E^{\boxtimes n}\otimes \rho$ is exceptional and the objects obtained this way are pairwise orthogonal, i.e. $\Ext^*(E^{\boxtimes n}\otimes \rho,E^{\boxtimes n}\otimes \rho')=0$ for $\rho\neq\rho'$. 

The case of biggest interest is if $Z$ is a surface. Then there is the Bridgeland--King--Reid--Haiman equivalence (see \cite{BKR} and \cite{Hai}) 
\[\Phi\colon \Db(Z^{[n]})\xrightarrow{\simeq} \Db_{\sym_n}(Z^n).\]
In particluar, when $Z=X$ is an Enriques surface, we get further induced autoequivalences of $X^{[n]}$ and its cover $\mathrm{CY}_n$ using Proposition \ref{sphericaldeg2cover}.

Let now $Z=X$ be an Enriques surface and assume that there exists an object $0\neq F\in \langle E, E\otimes \omega_X\rangle ^\perp$. This is equivalent to $\tilde E^\perp$ being non-trivial, where $\tilde E:=\pi^*E$ is the corresponding spherical object on the K3 cover of $X$. 
In this case $F^{\boxtimes n}$ is orthogonal to every $E^{\boxtimes n}\otimes \rho$ and $\omega_{X^n}\otimes E^{\boxtimes n}\otimes \rho$. Thus, by Proposition \ref{descentdesc}, all the associated twists $T_\rho:=T_{E^{\boxtimes n}\otimes \rho}\in\Aut( \Db(X^{[n]}))$ and $\tilde T_\rho:=\tilde T_{E^{\boxtimes n}\otimes \rho}\in\Aut(\Db(\mathrm{CY}_n))$ are non-standard. 

In our situation there is also another induced non-standard autoequivalence, namely $\widetilde{T_{E}^{\boxtimes n}}\colon \Db(\mathrm{CY}_n)\to \Db(\mathrm{CY}_n)$; see \cite{Ploog-Sosna}. It is given as a lift of $T_E^{\boxtimes n}=\FM_{\kp^{\boxtimes n}}\in \Aut(\Db(X^{[n]}))$, where $\kp\in\Db(X\times X)$ denotes the Fourier--Mukai kernel of $T_E$; see \cite{Ploog}.

Recall that the isomorphism classes of irreducible representations of $\sym_n$ are in bijection to the set $P(n)$ of partitions of $n$. For an exceptional object $E\in \Db(X)$ we set 
\begin{align*}&G_E:=\langle[1], T_E^{\boxtimes n},T_\rho:\,\rho\in P(n)\rangle\subset\Aut(\Db(X^{[n]})), \\
& \tilde G_E:=\langle[1], \widetilde{T_E^{\boxtimes n}},\tilde T_\rho:\,\rho\in P(n)\rangle\subset\Aut(\Db(\mathrm{CY}_n)).
\end{align*}

\begin{proposition}\label{comparisontoboxes}
Assume that $E\in \Db(X)$ is exceptional and $0\neq F\in\langle E, E\otimes \omega_X\rangle ^\perp$. We have $G_E\cong \IZ^{p(n)}\cong \tilde G_E$, where $p(n)=|P(n)|$. Furthermore, $T'\notin G_E$ and $\tilde T\notin \tilde G_E$, where $T'$ is a descent of $\tilde{T}$.
\end{proposition}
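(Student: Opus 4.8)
The plan is to argue in two stages: first that each of the groups $G_E$ and $\tilde G_E$ is freely generated by the listed twists, and then that the descent $T'$ (respectively $\tilde T$) cannot lie in the group so generated. For the first stage, I would use the orthogonality built into the construction. The objects $E^{\boxtimes n}\otimes\rho$ for $\rho\in P(n)$, together with $\omega_{X^n}\otimes E^{\boxtimes n}\otimes\rho$, are pairwise orthogonal exceptional objects in $\Db(X^{[n]})$ (via the BKRH equivalence), and $F^{\boxtimes n}$ lies in the common left- and right-orthogonal complement. So by Proposition \ref{descentdesc} (and Remark \ref{twistrel}) the twist $T_\rho$ acts as a nontrivial shift on $E^{\boxtimes n}\otimes\rho$ and as the identity on $E^{\boxtimes n}\otimes\rho'$ for $\rho'\neq\rho$ as well as on $F^{\boxtimes n}$; the twist $T_E^{\boxtimes n}=\FM_{\kp^{\boxtimes n}}$ preserves all these objects (its kernel $\kp$ fixes $E$, so $\kp^{\boxtimes n}$ fixes each $E^{\boxtimes n}\otimes\rho$) but acts by a shift on $F^{\boxtimes n}$, since $T_E$ shifts $F$. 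Testing an arbitrary word $[1]^{a_0}\circ (T_E^{\boxtimes n})^{a} \circ \prod_\rho T_\rho^{a_\rho}$ against the collection $\{E^{\boxtimes n}\otimes\rho\}_\rho\cup\{F^{\boxtimes n}\}$ and reading off the induced shifts shows that the exponents are forced to vanish; hence $G_E$ is free abelian on $p(n)$ generators. (One must check the generators commute — they do, because each is supported, up to the action, on objects on which the others are the identity; more precisely the kernels involved have the required orthogonality. Strictly the $[1]$ is needed to account for the ambiguity that $T_\rho$ might be a shift of a finite-order element, but Proposition \ref{descentdesc} guarantees $T_\rho$ has unbounded cohomological amplitude, so it generates a copy of $\IZ$.) The same bookkeeping, applied after $\pi^*$, gives $\tilde G_E\cong\IZ^{p(n)}$; here one uses that $\pi^*$ of each of the above objects still forms an orthogonal collection on $\mathrm{CY}_n$ and that the lifts $\tilde T_\rho$, $\widetilde{T_E^{\boxtimes n}}$ act as the corresponding shifts, which follows from the descent/lift formalism of Subsection \ref{ccequi} together with Proposition \ref{sphericaldeg2cover}.

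For the second stage, the key invariant is cohomological amplitude. Every element of $G_E$ is, by the previous paragraph, a composition of twists each of which acts by shifts $[0]$ or $[\ell_\rho]$ on the spanning-type collection; more importantly, $G_E\cong\IZ^{p(n)}$ and each nonzero element acts on some $E^{\boxtimes n}\otimes\rho$ (or on $F^{\boxtimes n}$) by a pure shift $[m]$ with $m$ a fixed nonzero multiple of the shift of the corresponding generator — i.e. $m\in(2n-2)\IZ$ or the analogous lattice coming from $T_E$'s shift. On the other hand, by Lemma \ref{notwist}, the descent $T'$ sends no nonzero object $\alpha$ to $\alpha[\ell]$ or $\alpha\otimes\omega_{X^{[n]}}[\ell]$ unless $\ell\in\{0,3-2n\}$; and $T'$ does move some object by exactly $[3-2n]$ (the objects $F(\beta)$, up to a twist, by Equation (\ref{Taction}) — indeed $T_\ka i(\alpha)\cong \mathrm{M}_{\omega_X}i\ks_\ka(\alpha)[-d+1]$ and for $\alpha$ a skyscraper this is $\alpha[\,\dim X - d + 1]=\alpha[3-2n]$ after descending, and $T'$ is not a pure shift on it). I would then argue by contradiction: if $T'\in G_E$, say $T'=g$, then $g$ acts on $E^{\boxtimes n}\otimes\rho$ by a definite shift for each $\rho$. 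Since $\pi^*(E^{\boxtimes n}\otimes\rho)$ is, up to the deck-transformation action, a sum of spherical-type objects, and $T'$ restricted appropriately must act compatibly with $\tilde T$ under $\pi^*/\pi_*$, one pins down that $g$ must act as the identity on each $E^{\boxtimes n}\otimes\rho$ and on $F^{\boxtimes n}$ (any nonzero shift would violate Lemma \ref{notwist}, since $3-2n\notin (2n-2)\IZ\setminus\{0\}$ for $n\geq 2$, and $0$-versus-nonzero forces all generator exponents to vanish). But then $g=\id$ (using freeness), contradicting that $T'$ is not the identity — e.g. $\rk(T'(k(\xi)))=4n-2\neq \pm 1$ by Lemma \ref{rank} and Equation (\ref{coneofT}), whereas $\id$ is rank-preserving. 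The same argument with $\pi^*$-images proves $\tilde T\notin\tilde G_E$, using Lemma \ref{notwist}'s statement for $\tilde\alpha\in\Db(\mathrm{CY}_n)$.

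The main obstacle I anticipate is the numerical incompatibility argument in the second stage: one has to make precise exactly which shifts elements of $G_E$ can realize on a well-chosen test collection, and show that the shift $3-2n$ that $T'$ genuinely produces is incompatible with all of them — this requires knowing that the generator shifts $\ell_\rho$ (coming from $\ks_\ka$ for $\ka$ generated by $E^{\boxtimes n}\otimes\rho$ on $X^{[n]}$, so $\ell_\rho = 1-2n$ up to the $\omega$-twist) and the shift from $T_E^{\boxtimes n}$ on $F^{\boxtimes n}$ (which is $n(1-2)= -n$, since $T_E$ shifts $F$ on a surface by $[-1]$ per factor) generate, together with $3-2n$, a configuration in which no nontrivial combination equals a single pure shift on a single object while being the identity on all the others. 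Handling the degenerate coincidences (e.g. small $n$, or when $E^\perp$-type objects accidentally overlap) and keeping careful track of the $\mathrm{M}_{\omega}$-twists distinguishing the two descents is where the real care is needed; everything else is an orthogonality bookkeeping exercise that the earlier lemmas have set up.
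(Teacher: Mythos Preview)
There is a genuine gap in your first stage, and it propagates through the rest of the argument.

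You have the action of $T_E$ on $E$ and $F$ reversed. Since $F\in\langle E,E\otimes\omega_X\rangle^\perp$, the twist $T_E$ acts as the \emph{identity} on $F$ (by the cone description in Equation~(\ref{coneofT})), while on $E$ it acts by $E\mapsto E\otimes\omega_X[-1]$ (Equation~(\ref{Taction}) with $d=2$ and $\ks_\ka=\id$). Consequently $T_E^{\boxtimes n}$ fixes $F^{\boxtimes n}$ and shifts $E^{\boxtimes n}\otimes\rho$ by $[-n]$ (up to an $\omega$-twist), exactly opposite to what you wrote.

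This matters because your test collection $\{E^{\boxtimes n}\otimes\rho\}_\rho\cup\{F^{\boxtimes n}\}$ then fails to separate the exponents. Testing on $F^{\boxtimes n}$ forces $c=0$, but testing on $E^{\boxtimes n}\otimes\rho$ only gives the single relation $na+b_\rho(2n-1)=0$ for each $\rho$ (plus a parity condition from the $\omega$-twist). Since $\gcd(n,2n-1)=1$, this system has nontrivial integer solutions, e.g.\ $a=2n-1$, $b_\rho=-n$ for all $\rho$. So you cannot conclude freeness from these objects alone.

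The paper repairs this by introducing the mixed test object
\[
E^1\cdot F^{n-1}:=\Inf_{\sym_1\times\sym_{n-1}}^{\sym_n}\bigl(E\boxtimes F^{\boxtimes(n-1)}\bigr),
\]
on which $T_E^{\boxtimes n}$ acts by the shift $[-1]$ while every $T_\rho$ acts trivially (since $E^1\cdot F^{n-1}$ is orthogonal to each $E^{\boxtimes n}\otimes\rho$). This isolates the exponent $a$ directly. Once $a=0$ and $c=0$, testing on $E^{\boxtimes n}\otimes\rho$ forces each $b_\rho=0$. The same family of objects, combined with Lemma~\ref{notwist}, then handles the second stage cleanly: any element of $G_E$ acts on each test object by a shift drawn from $\{0,3-2n\}$ only if all exponents vanish, so $T'\in G_E$ would force $T'=\id$, contradicting (for instance) the rank computation. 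Your second stage is headed in the right direction but is built on the wrong shifts and is considerably more tangled than necessary; the paper's argument is a direct bookkeeping check once the mixed object is in hand.
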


\begin{proof}
For $0\le k\le n$ we consider the objects
\[E^k\cdot F^{n-k}:=\Inf_{\sym_k\times \sym_{n-k}}^{\sym_n}(E^{\boxtimes k}\boxtimes E^{\boxtimes n})\in \Db_{\sym_n}(X^n)\,.\]
We have $T_E(E)=E[-1]$ and $T_E(F)=F$. It follows that
\begin{align}\label{1}
T_E^{\boxtimes n}\colon E^k\cdot F^{n-k}\mapsto E^k\cdot F^{n-k}[-k]\,,\quad E^{\boxtimes n}\otimes \rho\mapsto E^{\boxtimes n}\otimes \rho[-n].
\end{align}
By Remark \ref{twistrel}, the latter shows that $T_E^{\boxtimes n}$ commutes with all the $T_\rho$. 
Note that for $k<n$ and $\rho\not\simeq \rho'$ we have
\[\Ext^*(E^{\boxtimes n}\otimes \rho,E^k\cdot F^{n-k})=0=\Ext^*(E^{\boxtimes n}\otimes \rho,E^{\boxtimes n}\otimes \rho')\,.\]
Thus, by Equations (\ref{coneofT}) and (\ref{Taction}), and Remark \ref{twistrel}
\begin{align}\label{2}T_\rho\colon &E^k\cdot F^{n-k}\mapsto E^k\cdot F^{n-k} \text{ for $k<n$}\,,\\\notag T_\rho\colon& E^{\boxtimes n}\otimes \rho'\mapsto 
\begin{cases}
 E^{\boxtimes n}\otimes \rho' &\text{ if $\rho\neq\rho'$}\\
\omega_{X^n}\otimes E^{\boxtimes n}\otimes \rho'[-(2n-1)] &\text{ if $\rho=\rho'$}\\
\end{cases}
\end{align}
which, in particular, shows that the $T_\rho$ pairwise commute. 

Now consider $\Psi= (T_E^{\boxtimes n})^a\circ \prod_{\rho} T_\rho^{b_\rho}[c]\in G_E$ and assume that $\Psi\cong \id$.
We have $\Psi(F^{\boxtimes n})=F^{\boxtimes n}[c]$ and thus $c=0$. It follows that $\Psi(E^1\cdot F^{n-1})=E^1\cdot F^{n-1}[-a]$ and thus $a=0$. Finally, $\Psi(E^{\boxtimes n}\otimes \rho)=\omega_{X^n}\otimes E^{\boxtimes n}\otimes \rho[-b_\rho(2n-1)]$ shows that $b_\rho=0$ for all $\rho$. 
The assertion that $T'\notin G_E$ follows in a similar way using Lemma \ref{notwist}. 

The identities (\ref{1}) and (\ref{2}) lift to identities in $\Db(\mathrm{CY}_n)$. Therefore, one can analogously show that $\tilde G_E\cong \IZ^{p(d)}$ and that $\tilde T\notin G_E$.   
\end{proof}

\begin{remark}
Let $\alt$ be the alternating representation, i.e. the one-dimensional representation on which $\sym_n$ acts by multiplication by $\sgn$. By Remark \ref{twistrel} we have $T_\alt\cong \mathrm{M}_\alt\circ T_{E^{\boxtimes n}}\circ \mathrm{M}_\alt$ where $\mathrm{M}_\alt\in \Aut(\Db_{\sym_n}(X^n))$ is the involution $(-)\otimes \alt$. Note that for higher dimensional irreducible representations $\rho$ there is no such relation since $\mathrm{M}_\rho$ is not an equivalence.  
\end{remark}

\section{Exceptional sequences on $X^{[n]}$}\label{exceptionalsequences}

Let $Z$ be a smooth projective variety and $n\ge 2$. In this section we will construct exceptional sequences in the equivariant derived category $\Db_{\sym_n}(Z^n)$ out of exceptional sequences in $\Db(Z)$.

\begin{proposition}[{\cite[Cor.\ 1]{Sam}}]\label{Sam}
Let $Z$ and $Z'$ be smooth projective varieties with full exceptional sequences $E_1,\dots,E_k$ and $F_1,\dots, F_\ell$ respectively. Then 
\[E_1\boxtimes F_1, E_1\boxtimes F_2,\dots ,E_1\boxtimes F_\ell, E_2\boxtimes F_1,\dots,E_k\boxtimes F_\ell\]
is a full exceptional sequence of $\Db(Z\times Z')$.
\end{proposition}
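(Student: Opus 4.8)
The plan is to verify directly the two axioms of an exceptional collection and then fullness, with the Künneth formula carrying out almost all of the work. Recall first that for objects $\alpha,\alpha'\in\Db(Z)$ and $\beta,\beta'\in\Db(Z')$ there is a canonical isomorphism
\[
\Ext^*_{Z\times Z'}(\alpha\boxtimes\beta,\,\alpha'\boxtimes\beta')\cong\Ext^*_Z(\alpha,\alpha')\otimes_{\IC}\Ext^*_{Z'}(\beta,\beta').
\]
Taking $\alpha=\alpha'=E_i$ and $\beta=\beta'=F_j$ and using that $E_i$ and $F_j$ are exceptional shows at once that each $E_i\boxtimes F_j$ is exceptional. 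For semi-orthogonality, order the pairs $(i,j)$ lexicographically as in the statement, so that $E_{i'}\boxtimes F_{j'}$ precedes $E_i\boxtimes F_j$ precisely when $i'<i$, or $i'=i$ and $j'<j$. In the first case the factor $\Ext^*_Z(E_i,E_{i'})$ vanishes because $E_1,\dots,E_k$ is an exceptional collection; in the second the factor $\Ext^*_{Z'}(F_j,F_{j'})$ vanishes because $F_1,\dots,F_\ell$ is. Hence $\Ext^*(E_i\boxtimes F_j,\,E_{i'}\boxtimes F_{j'})=0$ in either case, so the given sequence is an exceptional collection in $\Db(Z\times Z')$; in particular the triangulated subcategory $\kt$ it generates is admissible, since $\Db(Z\times Z')$ has a Serre functor.

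It remains to prove fullness, i.e.\ $\kt=\Db(Z\times Z')$. For fixed $i$ the functor $E_i\boxtimes(-)$ is exact, so $E_i\boxtimes F_1,\dots,E_i\boxtimes F_\ell$ generate inside $\kt$ all objects $E_i\boxtimes G$ with $G$ in the triangulated category generated by $F_1,\dots,F_\ell$, which is all of $\Db(Z')$ by fullness of the $F_\bullet$. Running the same argument in the first variable, $\kt$ contains $H\boxtimes G$ for all $H\in\Db(Z)$ and $G\in\Db(Z')$. Since $\kt$ is admissible, $\Db(Z\times Z')=\langle\kt^\bot,\kt\rangle$, so it suffices to show $\kt^\bot=0$. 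If $\kg\in\kt^\bot$, then $\Hom(H\boxtimes G,\kg[m])=0$ for all $H,G,m$; adjunction along $p_{Z'}$ (together with $H$ being perfect) rewrites this, with $K:=H^\vee$, as $Rp_{Z'*}(p_Z^*K\otimes\kg)=0$ for every $K\in\Db(Z)$. Taking $K=\ko_z$ for a closed point $z\in Z$ and using the projection formula, $p_Z^*\ko_z\otimes\kg$ is the pushforward of the derived restriction $L\iota_z^*\kg$ along the inclusion $\iota_z$ of the fibre $\{z\}\times Z'$; since $p_{Z'}$ identifies this fibre with $Z'$, the vanishing means $L\iota_z^*\kg=0$ for every closed $z\in Z$. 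Inspecting the top non-vanishing cohomology sheaf of $\kg$ then forces $\kg=0$, and hence $\kt=\Db(Z\times Z')$.

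The main obstacle, and the only point that is not a purely formal consequence of the Künneth formula and the definitions recalled in Section~\ref{section:preliminaries}, is this last step — that external products generate $\Db(Z\times Z')$. Alternatively one may invoke the standard fact that $G\boxtimes G'$ is a classical generator of $\Db(Z\times Z')$ whenever $G$ and $G'$ are classical generators of $\Db(Z)$ and $\Db(Z')$, applied to $G=\bigoplus_i E_i$ and $G'=\bigoplus_j F_j$.
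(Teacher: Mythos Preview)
The paper does not give its own proof of this proposition; it is quoted as \cite[Cor.\ 1]{Sam} and used as a black box. Your argument is the standard one and is correct: K\"unneth handles exceptionality and semi-orthogonality immediately, and for fullness you correctly reduce to showing that external products $H\boxtimes G$ generate $\Db(Z\times Z')$, which you verify via restriction to fibres. The alternative you mention at the end (a classical generator of a product is a box product of classical generators) is the cleaner way to package the same idea and is how it is usually stated in the literature.
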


Let now $E_1,\dots,E_k$ be an exceptional sequence on $Z$. We consider for every multi-index $\alpha=(\alpha_1,\dots,\alpha_n)\in [k]^n:=\{1,\dots,k\}^n$ the object
\[E(\alpha):=E_{\alpha_1}\boxtimes \dots \boxtimes E_{\alpha_n}\in \Db(Z^n).\]
By Proposition \ref{Sam}, these objects form a full exceptional sequence of $\Db(Z^n)$ when considering them with the ordering given by the lexicographical order $<_{\text{lex}}$ on $[k]^n$. 
\begin{remark}\label{ext}
Let $\alpha,\beta\in [k]^n$. By the K\"unneth formula 
\begin{equation}
\label{ex}\Ext^*(E(\alpha),E(\beta))=\Ext^*(E_{\alpha_1},E_{\beta_1})\otimes \dots\otimes \Ext^*(E_{\alpha_n},E_{\beta_n}).
\end{equation}
Thus, we have $\Ext^*(E(\alpha),E(\beta))=0$, whenever $\alpha_i>\beta_i$ for an $i\in [n]$. 
\end{remark}

\begin{thm}[{\cite{Ela}}]\label{Ela2}
Let $G$ be a finite group acting on a variety $M$. Consider a (full) exceptional sequence of $\Db(M)$ of the form 
\[E_1^{(1)},\dots,E^{(1)}_{k_1},E^{(2)}_{1},\dots,E^{(2)}_{k_2},\dots, E^{(\ell)}_{1},\dots,E^{(\ell)}_{k_\ell}\]
such that $G$ acts transitively on every block $E^{(i)}_1,\dots,E^{(i)}_{k_i}$, i.e.\ for every $i\in [\ell]$ and every 
pair $a,b\in [k_i]$ there is a $g\in G$ such that $g^*E^{(i)}_a\cong E^{(i)}_b$ (and conversely for every $g\in G$ and every $a\in[k_i]$ there is a $b\in [k_i]$ such that $g^*E^{(i)}_a\cong g^*E^{(i)}_b$). Let $H_i:=\Stab_G(E^{(i)}_1)$ and assume that $E^{(i)}_1$ carries an $H_i$-linearisation, i.e. there exists an $\ke^{(i)}\in \Db_{H_i}(M)$ such that $\Res(\ke^{(i)})=E^{(i)}_1$.
Then 
\begin{align*}
&\Inf_{H_1}^G(\ke^{(1)}\otimes V^{(1)}_1),\dots,\Inf_{H_1}^G(\ke^{(1)}\otimes V^{(1)}_{m_1}),\dots,\\
&\Inf_{H_\ell}^G(\ke^{(\ell)}\otimes V^{(\ell)}_1),\dots,\Inf_{H_\ell}^G(\ke^{(\ell)}\otimes V^{(\ell)}_{m_\ell})
\end{align*}
is a (full) exceptional sequence of $\Db_G(M)$ with $V_1^{(i)},\dots, V_{m_i}^{(i)}$ being all the irreducible representations of $H_i$.
\end{thm}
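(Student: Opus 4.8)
The plan is to prove Theorem \ref{Ela2} by a direct verification that the claimed list of objects $\Inf_{H_i}^G(\ke^{(i)}\otimes V_j^{(i)})$ satisfies the two defining properties of an exceptional sequence: each object is exceptional, and $\Hom$-spaces between a later object and an earlier one vanish in all degrees. The fullness assertion will then follow by a generation argument. The key computational tool throughout is the adjunction $\Ext^*_{\Db_G(M)}(\Inf_{H}^G(A),B)\cong \Ext^*_{\Db_H(M)}(A,\Res B)$ together with the formula \eqref{infdef} for $\Inf$, which gives $\Res\circ\Inf_H^G(A)=\bigoplus_{[g]\in H\backslash G} g^*A$, so that
\[
\Ext^*_{\Db_G(M)}\bigl(\Inf_{H}^G(A),\Inf_{H'}^G(A')\bigr)\cong \bigoplus_{[g]\in H'\backslash G}\Ext^*_{\Db_H(M)}\bigl(A, g^*A'\bigr)^{?}
\]
— more precisely, one uses that $\Ext^*_{\Db_H(M)}(A,\Res\Inf_{H'}^G A')$ decomposes over double cosets $H\backslash G/H'$, with the summand indexed by $[g]$ being $\Ext^*_{\Db_{H\cap gH'g^{-1}}(M)}(A,g^*A')$, which in turn is a summand of the $H\cap gH'g^{-1}$-invariants of $\Ext^*_{\Db(M)}(E_a^{(i)},E_b^{(i')})$ where $E_a^{(i)}=\Res A$, $g^*E_1^{(i')}=E_b^{(i')}$.

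First I would establish the vanishing between different blocks: if $i>i'$, then for any $a\in[k_i]$ and $b\in[k_{i'}]$ we have $\Ext^*_{\Db(M)}(E_a^{(i)},E_b^{(i')})=0$ because the original sequence on $M$ is exceptional and every object in block $i$ comes after every object in block $i'$; hence all the double-coset summands vanish and $\Ext^*_{\Db_G(M)}(\Inf_{H_i}^G(\ke^{(i)}\otimes V),\Inf_{H_{i'}}^G(\ke^{(i')}\otimes V'))=0$. Second, I would treat objects within a single block $i$: here $\Res\Inf_{H_i}^G(\ke^{(i)}\otimes V_j^{(i)})=\bigoplus_{[g]\in H_i\backslash G} g^*(E_1^{(i)})\otimes g^*V_j^{(i)}$, and since $G$ acts transitively on the block, each $g^*E_1^{(i)}$ is isomorphic to some $E_b^{(i)}$; one then computes $\Ext^*_{\Db_G(M)}(\Inf_{H_i}^G(\ke^{(i)}\otimes V_j),\Inf_{H_i}^G(\ke^{(i)}\otimes V_{j'}))$ via the double-coset decomposition. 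The summand over the trivial double coset contributes $\Ext^*_{\Db_{H_i}(M)}(\ke^{(i)}\otimes V_j,\ke^{(i)}\otimes V_{j'})=\bigl(\Ext^*_{\Db(M)}(E_1^{(i)},E_1^{(i)})\otimes\Hom_{H_i}(V_j,V_{j'})\bigr)$, which by exceptionality of $E_1^{(i)}$ in $\Db(M)$ and Schur's lemma is $\IC$ if $j=j'$ and degree $0$, and $0$ otherwise; every nontrivial double coset $[g]$ with $g\notin H_i$ contributes a summand of the invariants of $\Ext^*_{\Db(M)}(E_1^{(i)},g^*E_1^{(i)})=\Ext^*_{\Db(M)}(E_1^{(i)},E_b^{(i)})$ with $b\neq 1$, which vanishes because within a block the original sequence is exceptional (so $\Hom$ between distinct members vanishes in both directions). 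This simultaneously gives exceptionality of each $\Inf_{H_i}^G(\ke^{(i)}\otimes V_j^{(i)})$ and the semiorthogonality inside a block (in \emph{both} orders, consistent with the fact that the ordering within a block is irrelevant). For fullness: the subcategory generated by all the $\Inf_{H_i}^G(\ke^{(i)}\otimes V_j^{(i)})$ contains, after applying $\Res$ and using that $(-)\otimes V$ ranges over all irreducibles so that $\ke^{(i)}\otimes(\text{regular rep of }H_i)=\Res\Inf_{H_i}^G\ke^{(i)}$ up to the $G$-action is built from the $E_b^{(i)}$'s, all the $E_b^{(i)}$, hence all objects of $\Db(M)$ when the original sequence is full; a standard argument (e.g.\ as in \cite{Ela}) upgrades this to generation of $\Db_G(M)$.

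The main obstacle I anticipate is bookkeeping the double-coset decomposition of $\Res\circ\Inf_{H_i}^G$ correctly and making sure the $H_i$-linearisations and the twists by representations interact as expected — in particular verifying that the ``diagonal'' double-coset summand really is the Schur-type term $\Ext^*(E_1^{(i)},E_1^{(i)})\otimes\Hom_{H_i}(V_j,V_{j'})$ with the correct linearisation, and that no hidden contributions survive from double cosets $[g]$ where $g^*E_1^{(i)}\cong E_1^{(i)}$ but $g\notin H_i$ — this cannot happen precisely because $H_i=\Stab_G(E_1^{(i)})$, so one must use the stabiliser hypothesis carefully. Since this is essentially the content of Elagin's theorem \cite{Ela}, the cleanest route is to cite \cite{Ela} for the general mechanism and only spell out the identification of the combinatorial input (the block structure coming from Remark \ref{ext} and Proposition \ref{Sam}) in the next section where the $\sym_n$-action on $Z^n$ is analysed.
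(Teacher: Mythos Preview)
The paper does not actually prove this theorem: it cites \cite{Ela} for the full case and simply remarks that the non-full case can be read off from Elagin's argument. Your direct verification via the $\Inf$/$\Res$ adjunction and a double-coset decomposition is thus more detailed than anything the paper offers, and its overall shape is the natural one (and is essentially what Elagin does).

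There is, however, one gap in your justification. When handling a nontrivial double coset $[g]$ with $g\notin H_i$ inside a single block, you assert that $\Ext^*_{\Db(M)}(E_1^{(i)},E_b^{(i)})=0$ for $b\neq 1$ ``because within a block the original sequence is exceptional (so $\Hom$ between distinct members vanishes in both directions)''. Exceptionality alone only gives one-sided vanishing: $\Ext^*(E_b^{(i)},E_1^{(i)})=0$ for $b>1$, but it says nothing about $\Ext^*(E_1^{(i)},E_b^{(i)})$. The missing step is to use the $G$-action: since $g^*$ is an autoequivalence,
\[
\Ext^*\bigl(E_1^{(i)},g^*E_1^{(i)}\bigr)\cong\Ext^*\bigl((g^{-1})^*E_1^{(i)},E_1^{(i)}\bigr)=\Ext^*\bigl(E_c^{(i)},E_1^{(i)}\bigr)
\]
for some $c\in[k_i]$ (because the block is $G$-stable) with $c\neq 1$ (because $g^{-1}\notin H_i=\Stab_G(E_1^{(i)})$), and \emph{now} exceptionality forces vanishing. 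With this correction your argument goes through and indeed yields the stronger fact you note, that the objects within each block of the induced sequence are mutually orthogonal regardless of ordering.
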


\begin{proof}
In \cite{Ela} the Theorem is only stated in the case of full exceptional sequences. But one can easily infer from the proof that
non-full exceptional sequences also induce non-full exceptional sequences.   
\end{proof}

In order to apply Theorem \ref{Ela2} we have to reorder the sequence consisting of the $E(\alpha)$ as follows.
For a multi-index $\alpha\in[k]^n$ we denote the unique non-decreasing representative of its $\sym_n$-orbit by $\nd(\alpha)$.
Then we define a total order $\lhd$ of $[k]^n$ by
\[\alpha\lhd \beta:\iff \begin{cases}
                         \nd(\alpha)\lex \nd(\beta) \quad\text{or}\\
                         \nd(\alpha) =  \nd(\beta)\text{ and } \alpha\lex \beta
                        \end{cases}
\]
Now the group $\sym_n$ acts transitively on the blocks consisting of all $E(\alpha)$ with fixed $\nd(\alpha)$ because of 
$\sigma^*E(\alpha)\cong E(\sigma^{-1}\cdot \alpha)$. Furthermore, every $E(\alpha)$ has a canonical $\Stab(\alpha)$-linearisation given by permutation of the factors in the box product. It remains to show that $(E(\alpha))_\alpha$ with the ordering given by $\lhd$ is still an exceptional sequence. This follows by Remark \ref{ext} and the last item of the following lemma.
\begin{lemma}\label{combi}
Let $\alpha,\beta\in[k]^n$.
\begin{enumerate}
\item Let $\nd(\alpha)=\nd(\beta)$ but $\alpha\neq \beta$. Then there exists an $i\in [n]$ such that $\alpha_i<\beta_i$.
\item Let $\sigma\in \sym_n$. Then there exists an $i\in [n]$ such that $\alpha_i<\beta_i$ if and only if there exists a 
$j\in[n]$ such that $(\sigma\cdot\alpha)_j<(\sigma\cdot \beta)_j$. 
\item If $\nd(\alpha)\lex\nd(\beta)$, then there exists an $i\in [n]$ such that $\alpha_i<\beta_i$.
\item Let $\alpha\lhd \beta$. Then there exists an $i\in [n]$ such that $\alpha_i<\beta_i$.
\end{enumerate}
\end{lemma}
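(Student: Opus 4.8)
The plan is to prove the four items of Lemma~\ref{combi} in the order (2), (1), (3), (4), since later items reduce to earlier ones together with Remark~\ref{ext}. Item (2) is the purely combinatorial backbone: a permutation $\sigma \in \sym_n$ simultaneously reindexes both tuples $\alpha$ and $\beta$, so the set of coordinates $\{i : \alpha_i < \beta_i\}$ is carried bijectively onto $\{j : (\sigma\cdot\alpha)_j < (\sigma\cdot\beta)_j\}$; in particular one set is nonempty iff the other is. This is essentially a one-line observation once one fixes the convention for the $\sym_n$-action on multi-indices (the same convention implicit in $\sigma^*E(\alpha)\cong E(\sigma^{-1}\cdot\alpha)$), and I would just spell that out.

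For item (1), suppose $\nd(\alpha) = \nd(\beta)$ but $\alpha \neq \beta$. Then $\alpha$ and $\beta$ are two distinct permutations of the same non-decreasing tuple; in particular they have the same multiset of entries, so $\sum_i \alpha_i = \sum_i \beta_i$. Since $\alpha \neq \beta$, there is some coordinate where they differ, and because the coordinate-sums agree, there must exist at least one $i$ with $\alpha_i < \beta_i$ (if $\alpha_i \geq \beta_i$ for all $i$, equality of sums would force $\alpha = \beta$). For item (3), assume $\nd(\alpha) \lex \nd(\beta)$. Pick $\sigma, \sigma' \in \sym_n$ with $\sigma\cdot\alpha = \nd(\alpha)$ and $\sigma'\cdot\beta = \nd(\beta)$ non-decreasing. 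Comparing the non-decreasing tuples $\nd(\alpha) \lex \nd(\beta)$: at the first coordinate $p$ where they differ we have $\nd(\alpha)_p < \nd(\beta)_p$, and since both tuples are non-decreasing, for every index $q \geq p$ we get $\nd(\alpha)_q \geq \nd(\alpha)_p$ and $\nd(\beta)_q \leq$ nothing useful directly — so instead I would argue: $\nd(\alpha)_p < \nd(\beta)_p \leq \nd(\beta)_q$ for all $q \geq p$, hence $\nd(\alpha)_p < \nd(\beta)_q$ for the $n-p+1$ indices $q \geq p$, while $\nd(\alpha)$ takes values $< \nd(\beta)_p$ only on a prefix of length at most $p-1$; a pigeonhole/counting argument then yields a coordinate $i$ with $\nd(\alpha)_i < \nd(\beta)_i$. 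Finally, apply item (2) with the permutation relating $\nd(\alpha)$ to $\alpha$ — but one must be careful here since the permutations needed to normalise $\alpha$ and $\beta$ may differ; the cleaner route is to prove the statement ``$\exists i:\ \alpha_i < \beta_i$'' is equivalent to its non-decreasing-rearrangement version via a standard rearrangement inequality argument. Then item (4) is immediate: if $\alpha \lhd \beta$ then either $\nd(\alpha) \lex \nd(\beta)$, handled by (3), or $\nd(\alpha) = \nd(\beta)$ with $\alpha \neq \beta$, handled by (1).

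The main obstacle is item (3), specifically transferring the coordinate-wise inequality from the normalised tuples back to $\alpha$ and $\beta$ when they require \emph{different} permutations to normalise. The honest fix is a lemma of the flavour: for tuples $a, b \in [k]^n$, there exists $i$ with $a_i < b_i$ if and only if $a^{\uparrow}$ is not coordinate-wise $\geq b^{\uparrow}$, where $a^{\uparrow}$ denotes the non-decreasing rearrangement; equivalently, $a$ coordinate-wise $\geq$ some permutation of $b$ iff $a^{\uparrow} \geq b^{\uparrow}$ coordinate-wise. This is a classical fact (essentially the statement that the dominance-type order is detected by sorted tuples), and once established the whole lemma falls out. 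I would state and prove this small rearrangement fact first, deduce (1)--(4) from it together with (2), and keep the exposition short since none of the steps are deep — only the bookkeeping with permutations needs care.
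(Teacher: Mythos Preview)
Your treatment of items (1), (2), and (4) is correct and matches the paper. The gap is in item (3): the rearrangement ``equivalence'' you propose is false. Take $\alpha=(2,1)$ and $\beta=(1,2)$; then $\nd(\alpha)=\nd(\beta)=(1,2)$, so $\alpha^{\uparrow}$ is coordinate-wise $\geq \beta^{\uparrow}$, yet $\alpha_2=1<2=\beta_2$. Thus ``$\exists i:\alpha_i<\beta_i$'' is \emph{not} equivalent to ``$\exists j:\nd(\alpha)_j<\nd(\beta)_j$''. Your second formulation (``$a$ dominates some permutation of $b$ iff $a^{\uparrow}\geq b^{\uparrow}$ coordinate-wise'') is a true statement, but its negation is not ``$\exists i: a_i<b_i$'', so it is not equivalent to your first formulation. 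Fortunately you only need one implication: if $\alpha_i\geq\beta_i$ for all $i$ then $\nd(\alpha)_j\geq\nd(\beta)_j$ for all $j$ (sorting preserves coordinate-wise domination), and the contrapositive gives (3). So your route is salvageable once you drop the false ``iff''.

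The paper's argument avoids any auxiliary rearrangement lemma and is shorter. It uses (2) to reduce to the case $\alpha=\nd(\alpha)$ (apply the permutation sorting $\alpha$ to \emph{both} tuples; this leaves $\nd(\beta)$ unchanged), lets $\sigma$ be a permutation with $\beta=\sigma\cdot\nd(\beta)$, and sets $m$ to be the first index where $\nd(\alpha)$ and $\nd(\beta)$ differ. A two-case pigeonhole on whether $\sigma(m)\leq m$ then exhibits an explicit index --- either $\sigma(m)$, or $\sigma(\ell)$ for some $\ell>m$ with $\sigma(\ell)\leq m$ --- at which $\alpha<\beta$. This sidesteps precisely the obstacle you flagged (that $\alpha$ and $\beta$ may need different permutations to normalise): only $\alpha$ is normalised, and the permutation for $\beta$ is handled by the case split.
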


\begin{proof}
If $\nd(\alpha)=\nd(\beta)$, we have $\alpha_1+\dots+\alpha_n=\beta_1+\dots+\beta_n$. This shows (1). By setting $j=\sigma(i)$ we obtain (2). In order to show (3) we may now assume using (2) that $\alpha=\nd(\alpha)$. Let $\sigma\in \sym_n$ be such that $\beta=\sigma\cdot\nd(\beta)$ and let $m:=\min\{\ell\in[n]\mid \nd(\alpha)_\ell\ne \nd(\beta)_\ell\}$. Then $\nd(\alpha)_m<\nd(\beta)_m$. 
If $\sigma(m)\le m$, we have $\alpha_{\sigma(m)}=\nd(\alpha)_{\sigma(m)}\le \nd(\alpha)_m<\nd(\beta)_m=\beta_{\sigma(m)}$. If $\sigma(m)>m$, there exists $\ell>m$ such that $\sigma(\ell)\le m$. This yields \[\alpha_{\sigma(\ell)}=\nd(\alpha)_{\sigma(\ell)}\le\nd(\alpha)_m<\nd(\beta)_m\le\nd(\beta)_\ell=\beta_{\sigma(\ell)}.\]
Finally, (4) follows from (1) and (3).    
\end{proof}

We summarize all of the above in the following

\begin{proposition}
If $\alpha\in[k]^n$ is a non-decreasing multi-index and $V_i^{(\alpha)}$ is an irreducible representation of $H_\alpha=\Stab(\alpha)$, then the collection of objects $\ke(\alpha,V_i^{(\alpha)}):=\Inf_{H_\alpha}^{\sym_n}\left(E(\alpha)\otimes V^{(\alpha)}_i\right)$ forms an exceptional sequence of $\Db_{\sym_n}(Z^n)$. The induced sequence is full if and only if the original sequence on $\Db(Z)$ is full.\qqed 
\end{proposition}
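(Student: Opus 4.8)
The plan is to obtain the statement as a direct application of Elagin's Theorem \ref{Ela2}, taken with $G=\sym_n$ acting on $M=Z^n$ by permutation of the factors, and with input exceptional sequence the family $\bigl(E(\alpha)\bigr)_{\alpha\in[k]^n}$ of $\Db(Z^n)$ ordered by the total order $\lhd$. All the ingredients have essentially been assembled in the paragraphs preceding Lemma \ref{combi}, so the argument is a matter of verifying the hypotheses of Theorem \ref{Ela2} in turn and then reading off the conclusion.

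First I would check that $\bigl(E(\alpha)\bigr)$, ordered by $\lhd$, is an exceptional sequence in $\Db(Z^n)$. Each $E(\alpha)$ is exceptional by the K\"unneth formula (\ref{ex}) together with exceptionality of the $E_i$. For the semi-orthogonality, recall the convention that in an exceptional sequence a later object has no $\Ext^*$ into an earlier one: so if $\alpha\lhd\beta$ we must show $\Ext^*(E(\beta),E(\alpha))=0$. By Remark \ref{ext} this vanishes as soon as $\beta_i>\alpha_i$ for some $i\in[n]$, i.e.\ as soon as $\alpha_i<\beta_i$ for some $i$, and that is exactly what Lemma \ref{combi}(4) guarantees from $\alpha\lhd\beta$. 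Next, the blocks of this sequence are the sets $\{E(\alpha):\nd(\alpha)=\gamma\}$ for $\gamma$ a fixed non-decreasing multi-index; the definition of $\lhd$ groups each such block together, since the coarse order is governed by $\nd(\alpha)\lex\nd(\beta)$. On a block $\sym_n$ acts transitively via $\sigma^*E(\alpha)\cong E(\sigma^{-1}\cdot\alpha)$, and the converse part of the transitivity hypothesis is automatic because permuting multi-indices permutes the block. The distinguished representative of the block is $E(\gamma)$ with $\gamma=\nd(\gamma)$ non-decreasing; its stabiliser is the Young subgroup $H_\gamma=\Stab(\gamma)$, and $E(\gamma)$ carries the evident $H_\gamma$-linearisation given by permuting the equal factors of the box product. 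Hence Theorem \ref{Ela2} applies, and the exceptional sequence it produces on $\Db_{\sym_n}(Z^n)$ consists precisely of the objects $\Inf_{H_\gamma}^{\sym_n}\bigl(E(\gamma)\otimes V_i^{(\gamma)}\bigr)$ with $V_i^{(\gamma)}$ ranging over the irreducibles of $H_\gamma$ and $\gamma$ ranging over non-decreasing multi-indices --- that is, exactly the collection $\ke(\alpha,V_i^{(\alpha)})$ claimed, in the order induced by $\lhd$ on the $\gamma$'s and by Theorem \ref{Ela2} within each block.

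For the fullness clause I would invoke the parenthetical "(full)" version of Theorem \ref{Ela2}: the induced equivariant sequence is full if and only if the input sequence $\bigl(E(\alpha)\bigr)$ is full in $\Db(Z^n)$. Finally, iterating Proposition \ref{Sam} shows that $\bigl(E(\alpha)\bigr)$ is full in $\Db(Z^n)$ if and only if $E_1,\dots,E_k$ is full in $\Db(Z)$, which yields the stated equivalence. I do not expect a genuine obstacle here --- the substantive combinatorics is already contained in Lemma \ref{combi}; the only points demanding care are matching the direction of the $\Ext^*$-vanishing in the exceptional-sequence convention with the orientation of $\lhd$, and confirming that the chosen representative $E(\gamma)$ (with $\gamma$ non-decreasing) is the one whose full stabiliser acts by a linearisation, which is the case since $\Stab(\gamma)$ is then exactly the subgroup permuting equal entries, under which the box product $E(\gamma)$ is symmetric.
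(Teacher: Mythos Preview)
Your proposal is correct and follows essentially the same route as the paper: the proposition is stated with a \qqed because its proof is exactly the discussion preceding it, namely the verification that the $E(\alpha)$, ordered by $\lhd$, satisfy the hypotheses of Theorem \ref{Ela2} via Remark \ref{ext} and Lemma \ref{combi}(4), together with Proposition \ref{Sam} for fullness. You have reproduced this argument faithfully, including the care about matching the direction of $\Ext^*$-vanishing with the orientation of $\lhd$.
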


\begin{remark}
 An exceptional sequence is called \textit{strong} if all the higher extension groups between its members vanish.
Using Equation (\ref{ex}) one can show that $(\ke(\alpha,V^{(\alpha)}_i))_{\alpha,i}$ is strong if and only if 
 $(E_\ell)_\ell$ is strong. Thus, in the case that the full exceptional sequence $E_1,\dots,E_k$ of $\Db(Z)$ is strong, there is an equivalence of triangulated categories $\Db_{\sym_n}(Z^n)\simeq \Db(\text{Mod}-\text{End}_{\sym_n}(\km))$
where $\km:=\bigoplus_{\alpha,i} \ke(\alpha,V^{(\alpha)}_i)$; see \cite{Bon}.
\end{remark}

\begin{remark}
Using \cite{Ela2} one can also construct semi-orthogonal decompositions of $\Db_{\sym_n}(Z^n)$ out of general semi-orthogonal decompositions of $\Db(Z)$ in a similar way. Furthermore, if $T\in \Db(Z)$ is a tilting object, so is $\oplus_{\rho\in P(n)}(T^{\boxtimes n}\otimes \rho)\in \Db_{\sym_n}(Z^n)$.   
\end{remark}

\begin{remark}
Any Enriques surface has a completely orthogonal exceptional sequence $(E_i)_i$ of length $10$; see \cite{Zube}. The induced exceptional sequence in $\Db_{\sym_n}(X^n)\cong \Db(X^{[n]})$ is again completely orthogonal and the same holds for the corresponding sequence of spherical objects on $CY_n$. By \cite[Cor.\ 2.4]{Krug1} it follows that the spherical twists give an embedding $\IZ^{\ell(10,n)}\hookrightarrow \Aut(\Db(\mathrm{CY}_n))$ where $\ell(10,n)$ denotes the length of the induced sequence.
By arguments similar to those in the proof of Proposition \ref{comparisontoboxes} we also get an embedding $\IZ^{\ell(10,n)}\hookrightarrow \Aut(\Db(X^{[n]}))$.
\end{remark}

\begin{remark}
Let $Z=X$ be an Enriques surface and let \[\tilde \ke(\alpha):=\pi^*(\ke(\alpha,\triv))\in \Db(\mathrm{CY}_n).\] In the case that $\tilde E_1,\dots,\tilde E_k$ is an $A_k$-sequence of spherical objects on $\Db(\tilde X)$, the sequence
\[\tilde \ke(1,\dots,1),\tilde\ke(1,\dots,2),\dots,\tilde \ke(2,\dots,2),\dots,\tilde \ke(k,\dots,k)\]
is a $A_{(n-1)k+1}$ sequence of spherical objects in $\Db(\mathrm{CY}_n)$. Thus, there is an embedding 
$B_{(n-1)k+1}\hookrightarrow \Aut(\Db(\mathrm{CY}_n))$ of the braid group; see \cite{Seidel-Thomas}. 
\end{remark}
\section{The truncated universal ideal functor}\label{truncated}

The arguments in this section follow those of \cite[Sect.\ 6]{Meachan}. The key new observation is that the functor $\hat F:=\Phi F\colon \Db(Z)\to \Db_{\sym_n}(Z^n)$ for $Z=S$ a surface can be truncated to a functor $G$ in such a way that $G^RG\cong \hat F^R\hat F$ and that this functor generalises in a nicer way to varieties of arbitrary dimension than $\hat F$ does.    

If $Z=S$ is a surface, then $\hat F''=\Phi\FM_{\ko_{\kz_n}}= \FM_{\mathcal C^\bullet}$, where $\mathcal C^\bullet$ is the complex concentrated in degrees $0,\dots,n-1$ given by
\[0\to \bigoplus\limits_{i=0}^n\reg_{D_i}\to \bigoplus\limits_{|I|=2}\reg_{D_I}\otimes \alt_I\to \bigoplus\limits_{|I|=3}\ko_{D_I}\otimes \alt_I \dots \to \reg_{D_{[n]}}\otimes \alt_{[n]}\to 0\,;\]
see \cite{Sca1}.
For $I\subset [n]:=\{1,\dots,n\}$, the reduced subvariety $D_I\subset S\times S^n$ is given by 
$D_I=\{(y,x_1,\dots,x_n)\mid y=x_i\,\forall\, i\in I\}$ and $\alt_I$ is the alternating representation of $\sym_I$. 
Furthermore, $\hat F'=\Phi \FM_{\reg_{S\times S^{[n]}}}\cong \FM_{\reg_{S\times S^n}}$ and the induced map $\hat F'\to \hat F''$ is given by the morphism of kernels $\reg_{S\times S^n}\to \mathcal C^0=\oplus_i\reg_{D_i}$ whose components are given by restriction of sections. We set $G'':=\FM_{\mathcal C^0}$.
As explained in \cite[Sect.\ 6]{Meachan}, the main steps in the computation of the formulas of \cite{Sca1} and \cite{Krug} can be translated into the following statement
\begin{align}\label{cbul}\hat F'^R\hat F''\cong \hat F'^RG'',\quad \hat F''^R F'\cong G''^RF',\quad \hat F''^R \hat F''\cong G''^RG''.\end{align}

\begin{definition}
Let $Z$ be a smooth projective variety of arbitrary dimension $d$ and $n\ge 2$. 
The \textit{truncated universal ideal functor}
$G=\FM_\G\colon \Db(Z)\to \Db_{\sym_n}(Z^n)$
is the Fourier--Mukai transform whose kernel is the complex
\[\G:=\G^\bullet:=(0\to \reg_{Z\times Z^n}\to \oplus_{i=1}^n\reg_{D_i}\to 0)\in \Db_{\sym_n}(Z\times Z^n).\]
\end{definition}

Thus, there is the triangle of FM transforms $G\to G'\to G''$ with
\[
 G':=\FM_{\G'}=\hat F'=\Ho^*(X,-)\otimes \reg_{X^n},\quad G''=\FM_{\G''}=\Inf_{\sym_{n-1}}^{\sym_n} p_{n}^* \triv.
\]
For $E\in \Db(Z)$ we have $G''(E)=\oplus_{i=1}^n p_i^*E$ (see also Subsection \ref{subsection:equivariant} for details about the inflation functor $\Inf$ and its right adjoint $\Res$). 
The right-adjoints are 
\begin{align*}
 G'^R=\FM_{\G'^R}=\Ho^*(Z^n,-)^{\sym_n}\otimes \omega_Z[d]&,\quad \G'^R=\reg_{Z^n}\boxtimes\omega_Z [d],\\
 G''^R=\FM_{\G''^R}=[-]^{\sym_{n-1}}\circ p_{n*}\circ \Res_{\sym_n}^{\sym_{n-1}}&,\quad \G''^R=\oplus_{i=0}^n \reg_{D_i}.\end{align*}
In the surface case Equation (\ref{cbul}) gives the following
\begin{lemma}\label{surftrunc}
If $Z=S$ is a surface, $\hat F^R\hat F\cong G^RG$.\qqed
\end{lemma}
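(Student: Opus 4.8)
The plan is to prove Lemma \ref{surftrunc} by assembling the three isomorphisms of Equation (\ref{cbul}) using the two defining triangles of functors. Recall that $G \to G' \to G''$ and $\hat F \to \hat F' \to \hat F''$ are triangles of Fourier--Mukai transforms coming from the morphisms of kernels, and that $G' = \hat F'$ by construction. Taking right adjoints of the first triangle (which is possible since all functors involved are FM transforms, hence have adjoints) gives a triangle $G''^R \to G'^R \to G^R$, and similarly $\hat F''^R \to \hat F'^R \to \hat F^R$. The strategy is therefore to first compute $G^R \hat F'$ and $G^R \hat F''$ in terms of the known quantities, then feed these into the triangle obtained by applying $G^R$ to $\hat F \to \hat F' \to \hat F''$.

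More concretely, first I would apply $G^R$ to the triangle $\hat F \to \hat F' \to \hat F''$ to obtain a triangle
\[
G^R \hat F \to G^R \hat F' \to G^R \hat F''.
\]
So it suffices to identify the two outer terms with $\hat F^R \hat F'$ and $\hat F^R \hat F''$ compatibly, after which the conclusion $G^R \hat F \cong \hat F^R \hat F$ follows, and then one replaces $\hat F$ on the right by $\hat F'$ and $\hat F''$ once more. To compute $G^R \hat F' = G^R G'$ and $G^R \hat F'' = G^R \hat F''$, I would use the triangle $G''^R \to G'^R \to G^R$ obtained from the kernel triangle for $\G$. Composing this with $G' = \hat F'$ on the right yields a triangle $G''^R G' \to G'^R G' \to G^R G'$; by the second identity in (\ref{cbul}), $G''^R G' = G''^R F' \cong \hat F''^R F' = \hat F''^R \hat F'$, while $G'^R G' = \hat F'^R \hat F'$ tautologically, so $G^R G' \cong \cone(\hat F''^R \hat F' \to \hat F'^R \hat F') \cong \hat F^R \hat F'$, the last isomorphism being exactly the triangle $\hat F''^R \hat F' \to \hat F'^R \hat F' \to \hat F^R \hat F'$ obtained from $\hat F \to \hat F' \to \hat F''$. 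An entirely parallel argument using the third identity $G''^R G'' \cong \hat F''^R \hat F''$ and the first identity $G'^R G'' \cong \hat F'^R \hat F'' $ (this last is immediate since $G' = \hat F'$, but one needs $\hat F'^R \hat F'' \cong \hat F'^R G''$, which is the first relation in (\ref{cbul})) gives $G^R G'' \cong \hat F^R \hat F''$.

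The one genuine subtlety, and the step I expect to require the most care, is compatibility of the identifications: the isomorphism $G^R \hat F' \cong \hat F^R \hat F'$ and $G^R \hat F'' \cong \hat F^R \hat F''$ must be compatible with the maps induced by $\hat F' \to \hat F''$, so that the induced map on cones gives $G^R \hat F \cong \hat F^R \hat F$. This is where one must be slightly careful that the isomorphisms in (\ref{cbul}) are not merely abstract but are induced by the evident morphisms of kernels; since all of this takes place at the level of FM kernels in $\Db_{\sym_n}(Z \times Z^n)$ and the morphisms $\reg_{Z \times Z^n} \to \oplus_i \reg_{D_i} \to \mathcal{C}^\bullet$ are the structural ones, the required commutativities are built in. Alternatively, and perhaps more cleanly, one can work entirely at the level of convolution of kernels: $G^R \hat F$ is computed by a kernel which is the convolution of $\G^R$ with $\ki_{\kz_n}$ (pulled through $\Phi$), and the triangle $\G \to \G' \to \G''$ together with the three identities of (\ref{cbul}) reduces this convolution to the same answer as for $\hat F^R \hat F$. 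Given that the excerpt explicitly says these are the ``main steps in the computation'' already carried out in \cite{Sca1}, \cite{Krug}, and \cite{Meachan}, the lemma is genuinely a formal consequence, and the proof is the two-triangle diagram chase sketched above; hence the ``\qqed'' in the statement.

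\medskip
\noindent\emph{Remark on presentation.} Since the excerpt marks Lemma \ref{surftrunc} with \qqed\ in the statement itself, the intended proof is precisely the short derivation from (\ref{cbul}) via the functor triangles $G \to G' \to G''$ and $\hat F \to \hat F' \to \hat F''$ and their right-adjoint triangles, exactly as above; no separate proof environment is needed beyond recording this reduction.
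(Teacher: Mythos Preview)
Your proposal is correct and matches the paper's intended argument: the paper gives no separate proof beyond the sentence ``In the surface case Equation (\ref{cbul}) gives the following'' and the \qqed, so the lemma is meant to follow formally from (\ref{cbul}) via the two triangles $G\to G'\to G''$ and $\hat F\to \hat F'\to \hat F''$ together with $G'=\hat F'$, exactly as you spell out. Your discussion of the compatibility of the identifications is the one point the paper leaves entirely implicit, but it is handled as you say, at the level of kernels.
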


We compute the compositions of the kernels:
\begin{align*}
\G'^R\G'&=(\reg_Z\boxtimes \omega_Z)\otimes S^n\Ho^*(\reg_Z)[d],\\
\G'^R\G''&=(\reg_Z\boxtimes \omega_Z)\otimes S^{n-1}\Ho^*(\reg_Z)[d],\\
\G''^R\G'&=(\reg_Z\boxtimes \reg_Z)\otimes S^{n-1}\Ho^*(\reg_Z),\\
\G''^R\G''&=\reg_\Delta\otimes S^{n-1}\Ho^*(\reg_Z)\oplus (\reg_Z\boxtimes \reg_Z)\otimes S^{n-2}\Ho^*(\reg_Z).
\end{align*}
The induced map $\G'^R\G'\to \G'^R\G''$ under these isomorphisms is given by evaluation as follows. Let $(e_i)_i$ be a basis of $\Ho^*(\reg_Z)=\Hom(\reg_Z,\reg_Z[*])$. Then the component \[(\reg_Z\boxtimes \omega_Z)\cdot e_{i_1}\cdots e_{i_n}[d]\to (\reg_Z\boxtimes \omega_Z)\cdot e_{i_1}\cdots\hat e_{i_k}\cdots e_{i_n}[d]\] is $e_{i_k}\boxtimes \id[d]$.  
The component 
\[(\reg_Z\boxtimes \reg_Z)\otimes S^{n-1}\Ho^*(\reg_Z)\to (\reg_Z\boxtimes \reg_Z)\otimes S^{n-2}\Ho^*(\reg_Z)\] 
of $\G''^R\G'\to \G''^R\G''$ is given in the same way and the component 
\[(\reg_Z\boxtimes \reg_Z)\otimes S^{n-1}\Ho^*(\reg_Z)\to \reg_\Delta\otimes S^{n-1}\Ho^*(\reg_Z)\]
is the restriction map. The map $\G''^R\G'\to \G'^R\G'$ is given as follows. Let $(\theta_i)_i$ be the basis of $\Ho^*(\reg_Z)^\vee$ dual to $(e_i)_i$. Furthermore, let $\theta_i$ correspond to the morphism 
$\tilde \theta_i\in \Hom(\reg_Z,\omega_Z[d-*])\cong \Hom(\reg_Z,\reg_Z[*])^\vee$ under Serre duality. Then the component
\[(\reg_Z\boxtimes \reg_Z)\cdot e_{i_1}\cdots\hat e_{i_k}\cdots e_{i_n}\to (\reg_Z\boxtimes \omega_Z)\cdot e_{i_1}\cdots e_{i_n}[d]\]
is $\tilde \theta_{i_k}$. 

In the following we will use the commutative diagram
 \begin{align}\label{lattice}\xymatrix{
          \G''^R\G\ar[r]\ar[d]  & \G''^R\G' \ar[r]\ar[d]  & \G''^R\G'' \ar[d]\\
  \G'^R\G \ar[r]\ar[d]   & \G'^R\G' \ar[r]\ar[d] & \G'^R\G''   \ar[d]  \\
    \G^R\G \ar[r]   &  \G^R\G' \ar[r]   & \G^R\G'' \,.
} \end{align}
with exact triangles as columns and rows in order to deduce formulae for $G^RG=\FM_{\G^R\G}$.

\subsection{The case of an even dimensional Calabi--Yau variety}
If $Z$ is a Calabi--Yau variety of even dimension $d$, then $\Ho^*(\reg_Z)=\IC[0]\oplus \IC[-d]$ and $S^k\Ho^*(\reg_Z)=\IC[0]\oplus \IC[-d]\oplus\dots\oplus \IC[-dk]$. Let $u\in \Ho^d(\reg_Z)$ be the basis vector whose dual
$\theta\in \Hom(\reg_Z,\reg_Z[d])^\vee$ corresponds to $\id\in \Hom(\reg_Z,\reg_Z)$ under Serre duality. We denote the induced degree $d\ell$ basis vector of $S^k\Ho^*(\reg_Z)$ by $u^\ell$.
 
\begin{lemma}
For a Calabi--Yau variety $Z$ of even dimension $d$ we have \[G^RG\cong \id\oplus [-d]\oplus\dots \oplus [-d(n-1)]\,.\]  
\end{lemma}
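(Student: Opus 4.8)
The plan is to compute $\G^R\G = \FM$-kernel of $G^RG$ directly from the $3\times 3$ diagram \eqref{lattice} of exact triangles, using the four explicit composition formulas computed just above for $\G'^R\G'$, $\G'^R\G''$, $\G''^R\G'$ and $\G''^R\G''$, specialised to the even-dimensional Calabi--Yau case where $\Ho^*(\reg_Z)=\IC[0]\oplus\IC[-d]$ and hence $S^k\Ho^*(\reg_Z)=\bigoplus_{\ell=0}^{k}\IC[-d\ell]$. First I would substitute these values: $\G'^R\G'\cong(\reg_Z\boxtimes\omega_Z)\otimes\bigl(\bigoplus_{\ell=0}^{n}[-d\ell]\bigr)[d]$, $\G'^R\G''\cong(\reg_Z\boxtimes\omega_Z)\otimes\bigl(\bigoplus_{\ell=0}^{n-1}[-d\ell]\bigr)[d]$, $\G''^R\G'\cong(\reg_Z\boxtimes\reg_Z)\otimes\bigl(\bigoplus_{\ell=0}^{n-1}[-d\ell]\bigr)$, and $\G''^R\G''\cong\bigl(\reg_\Delta\otimes\bigoplus_{\ell=0}^{n-1}[-d\ell]\bigr)\oplus\bigl((\reg_Z\boxtimes\reg_Z)\otimes\bigoplus_{\ell=0}^{n-2}[-d\ell]\bigr)$.

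The key computation is the middle row of \eqref{lattice}, namely the triangle $\G'^R\G\to\G'^R\G'\to\G'^R\G''$. Under the identifications above, the map $\G'^R\G'\to\G'^R\G''$ is, by the explicit description of the ``evaluation'' map given in the excerpt, essentially multiplication/contraction by $u$ on the symmetric-power factors: on the basis vectors $e_{i_1}\cdots e_{i_n}$ it drops one factor. In the Calabi--Yau case there is only one nontrivial basis vector $u\in\Ho^d(\reg_Z)$ besides $1$, so $S^n\Ho^*=\bigoplus_{\ell=0}^n\IC\cdot u^\ell$ and the map $S^n\Ho^*\to S^{n-1}\Ho^*$ sends $u^\ell\mapsto \ell\, u^{\ell-1}$ (a Koszul-type contraction), which is surjective with one-dimensional kernel spanned by $u^n$ sitting in degree $-dn$. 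Hence $\G'^R\G\cong(\reg_Z\boxtimes\omega_Z)[d-dn]=(\reg_Z\boxtimes\omega_Z)[-d(n-1)]$. Similarly, from the middle \emph{column} $\G''^R\G\to\G'^R\G\to\G^R\G$ (or the first column $\G''^R\G\to\G''^R\G'\to\G''^R\G''$), one identifies $\G''^R\G$: the map $\G''^R\G'\to\G''^R\G''$ has components the same contraction on symmetric powers together with the restriction $\reg_Z\boxtimes\reg_Z\to\reg_\Delta$, and tracking the same Koszul contraction one finds $\G''^R\G\cong\reg_\Delta\otimes\bigoplus_{\ell=0}^{n-1}[-d\ell]$, using crucially that the restriction map $(\reg_Z\boxtimes\reg_Z)\otimes S^{n-1}\Ho^*\to\reg_\Delta\otimes S^{n-1}\Ho^*$ is surjective. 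Finally, feeding $\G''^R\G\cong\reg_\Delta\otimes\bigoplus_{\ell=0}^{n-1}[-d\ell]$ and $\G'^R\G\cong(\reg_Z\boxtimes\omega_Z)[-d(n-1)]$ into the bottom-left triangle $\G''^R\G\to\G'^R\G\to\G^R\G$, the connecting map $\G'^R\G\to\G''^R\G$ is built from the map $\G''^R\G'\to\G'^R\G'$ described via the dual basis $\tilde\theta_{i_k}$, which in the Calabi--Yau case is the insertion $u^{n-1}\mapsto u^n$ matching degree $-d(n-1)$; this map realizes $\G'^R\G\cong(\reg_Z\boxtimes\omega_Z)[-d(n-1)]$ as the top summand $\reg_\Delta[-d(n-1)]\cong(\reg_Z\boxtimes\omega_Z)|_\Delta[-d(n-1)]$ of $\G''^R\G$ shifted, so the cone $\G^R\G$ is the complementary part $\reg_\Delta\otimes\bigoplus_{\ell=0}^{n-2}[-d\ell]$ together with... — one has to check the extension is split, giving $\G^R\G\cong\reg_\Delta\otimes\bigoplus_{\ell=0}^{n-1}[-d\ell]$, i.e.\ $G^RG\cong\id\oplus[-d]\oplus\cdots\oplus[-d(n-1)]$.

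The main obstacle will be bookkeeping the maps between the various symmetric-power factors and verifying that all the relevant extensions split — in particular that the triangle $\G''^R\G\to\G'^R\G\to\G^R\G$ splits so that $\G^R\G$ is an honest direct sum of shifted diagonals rather than a nontrivial iterated extension. I expect this to follow because all objects involved are (shifts of) $\reg_\Delta$, and $\Ext$ groups between different shifts $\reg_\Delta[-d\ell]$ and $\reg_\Delta[-d\ell']$ with $\ell\neq\ell'$ in $\Db_{\sym_n}(Z^n\times Z^n)$ vanish in the degrees where the connecting maps live (the relevant $\Ext^{<0}$ or appropriately-shifted groups are zero since $d\geq 2$ and the shifts differ by multiples of $d$), forcing the relevant obstruction classes to vanish; alternatively one can argue via Lemma \ref{surftrunc}-style adjunction bookkeeping that $G^RG$, being self-adjoint-symmetric, decomposes according to its cohomology. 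A secondary subtlety is making sure the $\sym_n$-linearisations are respected throughout, but since every map in sight is $\sym_n$-equivariant by construction and the diagonal $\reg_\Delta$ carries its canonical linearisation, this is automatic.
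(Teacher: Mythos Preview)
Your overall strategy --- chasing through the $3\times 3$ diagram \eqref{lattice} --- matches the paper's, but two concrete computations go wrong and the final step does not close.

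First, your description of $\G'^R\G'\to\G'^R\G''$ as the contraction $u^\ell\mapsto\ell\,u^{\ell-1}$ is not what the excerpt says. The component from $e_{i_1}\cdots e_{i_n}$ to $e_{i_1}\cdots\widehat e_{i_k}\cdots e_{i_n}$ is $e_{i_k}\boxtimes\id$; with basis $\{1,u\}$, dropping a factor $1$ from $u^\ell1^{n-\ell}$ contributes the \emph{identity} component $u^\ell\to u^\ell$, while dropping a $u$ contributes the degree-$d$ map $u\boxtimes\id$ on $u^\ell\to u^{\ell-1}$. The map is therefore upper-triangular with invertible diagonal on $u^0,\dots,u^{n-1}$, and the fibre being carried by $u^n$ comes from cancelling those diagonal isomorphisms --- not from your formula, under which the kernel would be $u^0$, not $u^n$.

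Second, and more seriously, your identification of $\G''^R\G$ is incorrect. After the analogous cancellation in the top row one is left with the map $\reg_{Z^2}[-d(n-1)]\to\reg_\Delta\otimes S^{n-1}\Ho^*(\reg_Z)$ given by restriction into the summand $\reg_\Delta[-d(n-1)]$. Its fibre is not $\reg_\Delta\otimes\bigoplus_{\ell}[-d\ell]$; it is an extension involving $\I_\Delta[-d(n-1)]$. Consequently your final step, where you try to match $\G'^R\G\cong\reg_{Z^2}[-d(n-1)]$ with a diagonal summand $\reg_\Delta[-d(n-1)]$ of $\G''^R\G$ and then argue about splittings of the left column, cannot work: $\reg_{Z^2}$ is not isomorphic to $\reg_\Delta$, and the splitting discussion is addressing a non-existent configuration.

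The paper avoids computing $\G''^R\G$ altogether. Using a cancellation lemma of Hubery \cite[Lem.\ 5]{Hub}, it simplifies the \emph{entire} upper half of \eqref{lattice} in one step, so that the middle column becomes $\reg_{Z^2}[-d(n-1)]\to\reg_{Z^2}[-d(n-1)]$ and the right column becomes $\reg_\Delta\bigl([0]\oplus\cdots\oplus[-d(n-1)]\bigr)\to 0$. The surviving middle vertical map is the single component $u^{n-1}\to u^n$ of $\G''^R\G'\to\G'^R\G'$, which by the Serre-dual description is $\tilde\theta_u=\id$. The octahedral axiom then gives $\G^R\G\cong\reg_\Delta\bigl([0]\oplus\cdots\oplus[-d(n-1)]\bigr)$ directly, with no splitting argument required.
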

\begin{proof}
By the description of the last subsection, the components $\reg_{Z^2}\cdot u^\ell\to \reg_{Z^2}\cdot u^{\ell}$ of $\G''^R\G'\to \G''^R\G''$ and $\G'^R\G'\to \G'^R\G''$ equal the identity. By \cite[Lem.\ 5]{Hub}, the top of diagram (\ref{lattice}) is isomorphic to
\begin{align*}\xymatrix{
          \G''^R\G \ar[r]\ar[d] & \reg_{Z^2}[-d(n-1)] \ar[r]\ar[d]  & \reg_\Delta([0]\oplus[-d]\oplus\dots\oplus [-d(n-1)]) \ar[d]\\
  \G'^R\G \ar[r]   & \reg_{Z^2}[-d(n-1)] \ar[r] & 0  
} 
\end{align*}
where the middle vertical map is the component $\reg_{Z^2} \cdot u^{n-1}\to \reg_{Z^2} u^n[d]$ of $\G''^R\G'\to \G'^R\G'$. By the above description it is the identity. Now the claim follows by the octahedral axiom.
\end{proof}
\begin{thm}
If $Z$ is a Calabi--Yau variety of even dimension, then $G\colon \Db(Z)\to \Db_{\sym_n}(Z^n)$ is a $\IP^{n-1}$-functor.
\end{thm}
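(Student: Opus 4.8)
The plan is to verify the three conditions defining a $\IP^{n-1}$-functor (Subsection \ref{sphericalpn}) for $F=G$, with associated autoequivalence $H_G=H:=[-d]$ of $\Db(Z)$. Condition (1) is exactly the preceding lemma, $G^RG\cong\id\oplus H\oplus\dots\oplus H^{n-1}$. For condition (3) I would use the Serre-functor reformulation $\ks_\kb GH^{n-1}\cong G\ks_\ka$, where $\ka=\Db(Z)$ and $\kb=\Db_{\sym_n}(Z^n)$. Since $Z$ is Calabi--Yau of dimension $d$, $\ks_\ka=[d]$; and $\omega_{Z^n}=\omega_Z^{\boxtimes n}$ carries the trivial $\sym_n$-linearisation, because under a trivialisation $\omega_Z\cong\reg_Z$ the permutation linearisation is transported to the one on $\reg_{Z^n}$ given by reordering tensor factors, which is trivial as no Koszul signs arise (all sheaves sit in degree $0$). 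Hence $\ks_\kb=[nd]$, and $\ks_\kb GH^{n-1}=G[nd-d(n-1)]=G[d]=G\ks_\ka$, so (3) holds. (This step, unlike (1), does not use that $d$ is even; evenness enters only through the structure of $S^\bullet\Ho^*(\reg_Z)$ underlying (1).)

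The content of the theorem is condition (2), the shape (\ref{monadmatrix}) of $HG^RG\hookrightarrow G^RGG^RG\xrightarrow{G^R\epsilon G}G^RG$. Write $HG^RG=H\oplus H^2\oplus\dots\oplus H^n$ and $G^RG=\id\oplus H\oplus\dots\oplus H^{n-1}$. The component from $H^k$ to $H^j$ lies in $\Hom_{\Db(Z)}(\reg_\Delta[-dk],\reg_\Delta[-dj])\cong\Hom(\reg_\Delta,\reg_\Delta[d(k-j)])$, which vanishes for $j>k$ since $d>0$ and $\reg_\Delta$ has no negative self-extensions. Thus all entries strictly below the prospective unit diagonal vanish automatically and the entries above it are unconstrained, so only the diagonal components $H^k\to H^k$ for $k=1,\dots,n-1$ have to be isomorphisms, i.e.\ nonzero elements of $\Hom(\id,\id)=\IC$. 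Unravelling the first map, the summand $H^k=H\circ H^{k-1}$ of $HG^RG$ goes by the canonical inclusion to the $(1,k-1)$-summand $H\circ H^{k-1}$ of $G^RGG^RG\cong\bigoplus_{a,b=0}^{n-1}H^{a+b}$ (the $(a,b)$-summand being $H^a\circ H^b$); hence this diagonal component equals the structure constant of the monad $G^RG$ for the product $H\cdot H^{k-1}\to H^k$. It therefore suffices to identify $G^RG$, with its monad structure, with the truncated polynomial monad $\bigoplus_{\ell=0}^{n-1}H^\ell$ whose multiplication $H^a\cdot H^b\to H^{a+b}$ is the canonical identification for $a+b\leq n-1$ and zero otherwise; in particular its diagonal structure constants are all nonzero.

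To carry this out I would trace the monad multiplication $G^R\epsilon G$ through the lattice (\ref{lattice}), just as the preceding lemma traces $G^RG$ itself. At the level of Fourier--Mukai kernels, $G^R\epsilon G$ is induced by the counit $\epsilon$ of $(G,G^R)$ inserted in the middle of $G^R(GG^R)G$; pushing this through analogous lattice diagrams for the occurring compositions and using the identification of the columns of (\ref{lattice}) --- by \cite[Lem.\ 5]{Hub} and the octahedron used in the preceding lemma --- with sums of copies of $\reg_\Delta$ and $\reg_{Z^2}$ twisted by powers $u^\ell$ of the class $u\in\Ho^d(\reg_Z)$, the comparison maps become, on the $\reg_\Delta$-summands, the multiplication maps $\IC\cdot u^a\otimes\IC\cdot u^b\to\IC\cdot u^{a+b}$ of symmetric powers of $\Ho^*(\reg_Z)$ (visibly nonzero), while where $\reg_{Z^2}$-terms appear they contribute the evaluation and restriction maps already written out in Section \ref{truncated} --- there the relevant component $\reg_{Z^2}\cdot u^{n-1}\to\reg_{Z^2}\cdot u^{n}[d]$ was identified with the identity. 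So every diagonal structure constant comes out $\pm1$.

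The hard part will be precisely this bookkeeping. The monad $G^RG$ lives on $\Db(Z)$, but its multiplication is realised by a convolution on $Z^n\times Z^n$, and one has to keep careful track of the many summands (indexed by the multiplicities with which each $u^\ell$ occurs), of the $\sym_n$-linearisations, and of the Koszul signs, so as to ensure that the several contributions to a single diagonal entry do not cancel. Everything ultimately reduces to perfectness of the Serre pairing $\Ho^d(\reg_Z)\otimes\Ho^0(\omega_Z)\to\Ho^d(\omega_Z)=\IC$ (which makes the relevant ``adjoin a Serre-dual factor, then contract'' operations nonzero) together with the evident nonvanishing of the product $u^a\cdot u^b=u^{a+b}$ of symmetric powers; but implementing this faithfully at kernel level is the delicate step. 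In the surface case $d=2$ one could instead combine Lemma \ref{surftrunc} with Addington's theorem (\cite{Addington}) that $\FM_{\ki_{\kz_n}}$ is a $\IP^{n-1}$-functor on a K3 surface, but for $d\geq4$ there is no universal ideal sheaf and the direct kernel computation seems unavoidable.
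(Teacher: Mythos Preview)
Your treatment of conditions (1) and (3) matches the paper's. Your preliminary observation for (2) --- that the entries of the matrix (\ref{monadmatrix}) strictly below the unit subdiagonal vanish automatically because $\Hom(\reg_\Delta,\reg_\Delta[d(k-j)])=0$ for $j>k$ --- is a genuine simplification that the paper does not make explicit; it reduces (2) to checking that the monad products $H\cdot H^{k-1}\to H^k$ are nonzero.

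Where you diverge is in how you propose to compute these products. You want to chase the multiplication $G^R\epsilon G$ directly through enlarged versions of the lattice (\ref{lattice}), and you concede that this bookkeeping is the delicate step and is not carried out. The paper avoids this computation entirely. The point is that the isomorphism of the preceding lemma does more than produce an abstract splitting: it identifies $G^RG$, \emph{as a monad}, with the direct summand $(p_n^*\circ\triv)^R(p_n^*\circ\triv)\cong\id\otimes\Ho^*(\reg_{Z^{n-1}})^{\sym_{n-1}}\cong\id\otimes S^{n-1}\Ho^*(\reg_Z)$ of $G''^RG''$. On that summand the monad multiplication is nothing but the cup product on $S^\bullet\Ho^*(\reg_Z)$, for which $u\cdot u^{k-1}=u^k$ is visibly nonzero. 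This is exactly Addington's argument for K3 surfaces, transported via the dictionary $q^*\leadsto p_n^*\circ\triv$, $q_*\leadsto(-)^{\sym_{n-1}}\circ p_{n*}$, $g_*\leadsto\Inf$, $g^!\leadsto\Res$, and it works in every even dimension $d$ without reference to any universal ideal sheaf. So your final remark --- that for $d\ge 4$ the direct kernel computation seems unavoidable --- is precisely the insight the paper supplies: it \emph{is} avoidable, by recognising $G^RG$ inside the simpler monad $G''^RG''$.

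Your lattice-chasing plan is not wrong, and if executed should give the same conclusion, but it trades a one-line identification for a substantial diagram chase that you have not actually performed.
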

\begin{proof}
 By the previous lemma, condition (1) of the definition of a $\IP^{n-1}$-functor is satisfied.

The proof that condition (2) is satisfied is analogous to the proof in the case of the non-truncated functor when $X$ is a K3 surface. Basically, one has to go through \cite[Sec.\ 2.5]{Addington} and replace $F$ by $G$, $q^*$ by $p_n^*\circ \triv$ and its right adjoint $q_*$ by $(-)^{\sym_{n-1}}\circ p_{n*}$, $g_*$ by $\Inf$ and its right adjoint $g^!$ by $\Res$, $\HH^*(S^{[n]})$ by $\HH^*(Z^n)^{\sym_n}$, $\Ho^*(\reg_{S^{[n]}})$ by $\Ho^*(Z^n)^{\sym_n}$, 2 by $d$, and the symplectic form $\sigma$ by a generator of $\Ho^d(\reg_Z)$.

Very roughly, the idea is the following: $G^RG$ is identified with the direct summand $(p_n^*\circ \triv)^R(p_n^*\circ \triv)\cong \id \otimes \Ho^*(Z^{n-1},\reg_{Z^{n-1}})^{\sym_{n-1}}$ of $G''^RG''$ and the monad structure of $(p_n^*\circ \triv)^R(p_n^*\circ \triv)$ is given by the cup product on $\Ho^*(Z^{n-1},\reg_{Z^{n-1}})^{\sym_{n-1}}$ which has the right form.

Condition (3) is easy to check since all occurring autoequivalences are simply shifts. 
\end{proof}

\subsection{The case of an odd dimensional Calabi--Yau variety}
In this case $\Ho^*(\reg_Z)=\IC[0]\oplus \IC[-d]$ and $S^k\Ho^*(\reg_Z)=\IC[0]\oplus \IC[-d]$ for $k\ge 1$. The reason for the vanishing of the higher degrees is that the symmetric product is taken in the graded sense.
\begin{lemma}
 For $n\ge 3$ we have $\G^R\G\cong \reg_\Delta([0]\oplus [-d])$.
\end{lemma}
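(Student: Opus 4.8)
The plan is to plug the hypotheses into diagram~(\ref{lattice}) and then cancel the acyclic pieces, exactly as in the even-dimensional case. Since $Z$ is Calabi--Yau we have $\omega_Z\cong\reg_Z$, and since $d=\dim Z$ is odd the graded vector space $\Ho^*(\reg_Z)=\IC[0]\oplus\IC[-d]$ has odd top piece, so $S^k\Ho^*(\reg_Z)\cong\IC[0]\oplus\IC[-d]$ for all $k\ge1$ (and $S^0\Ho^*(\reg_Z)=\IC[0]$). Substituting this together with $\omega_Z\cong\reg_Z$ into the four composition formulas of the previous subsection, and using $n-2\ge1$ --- this is the only place where the hypothesis $n\ge3$ enters --- one gets
\[\G'^R\G'\cong\reg_{Z^2}([d]\oplus[0])\cong\G'^R\G'',\qquad \G''^R\G'\cong\reg_{Z^2}([0]\oplus[-d]),\]
\[\G''^R\G''\cong\reg_\Delta([0]\oplus[-d])\ \oplus\ \reg_{Z^2}([0]\oplus[-d]).\]

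The two claims to check are: (i) the map $\G'^R\G'\to\G'^R\G''$ is an isomorphism; and (ii) the composite of $\G''^R\G'\to\G''^R\G''$ with the projection onto the summand $\reg_{Z^2}\otimes S^{n-2}\Ho^*(\reg_Z)$ is an isomorphism. Both follow from the explicit descriptions of the evaluation and restriction maps recorded in the previous subsection: the evaluation maps ``remove one tensor factor with the morphism $e_{i_k}\boxtimes\id$'', so they never raise the $u$-degree, never send the degree-$0$ monomial summand into the $[-d]$-shifted summand, and on the diagonal they are multiplication by the number of removable degree-$0$ tensor factors, which equals $n$, $n-1$ in case~(i) and $n-1$, $n-2$ in case~(ii). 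Hence, once the two summands of each object are ordered by the power of $u$, the relevant $2\times2$ matrix of sheaf morphisms is triangular with an isomorphism in each diagonal entry, so it is an isomorphism; alternatively one may quote \cite[Lem.\ 5]{Hub}, just as for the even-dimensional lemma. I expect this verification --- keeping track of the internal $[-d]$-shifts sitting over the ``$x^{m-1}u$'' summands and matching them with the degree-$d$ maps $e_{i_k}\boxtimes\id$, and noting that the diagonal scalars are nonzero precisely because $n\ge3$ --- to be the only real obstacle; the rest is formal.

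Granting (i) and (ii): by~(i) and the middle row of~(\ref{lattice}) we have $\G'^R\G\cong0$, so the left column of~(\ref{lattice}) gives $\G^R\G\cong\G''^R\G[1]$, while the top row gives $\G''^R\G[1]\cong\cone(\G''^R\G'\to\G''^R\G'')$; therefore $\G^R\G\cong\cone(\G''^R\G'\to\G''^R\G'')$. Finally, by~(ii) the map $\G''^R\G'\to\G''^R\G''$ becomes, after composition with a suitable automorphism of its target, the inclusion of the direct summand $\reg_{Z^2}\otimes S^{n-2}\Ho^*(\reg_Z)$ precomposed with an isomorphism; its cone is thus the complementary summand $\reg_\Delta\otimes S^{n-1}\Ho^*(\reg_Z)\cong\reg_\Delta([0]\oplus[-d])$, which is the claim.
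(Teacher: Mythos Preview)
Your proof is correct and follows the same route as the paper's: both arguments verify that $\G'^R\G'\to\G'^R\G''$ is an isomorphism (your~(i)) and that the $\reg_{Z^2}$-component of $\G''^R\G'\to\G''^R\G''$ is an isomorphism (your~(ii)), then read off $\G'^R\G\cong 0$ and identify the remaining cone with $\reg_\Delta\otimes S^{n-1}\Ho^*(\reg_Z)$. The only difference is cosmetic --- the paper simply asserts the two isomorphisms while you supply the triangular-matrix justification, and the paper phrases the last step as ``$\G''^R\G\cong\reg_\Delta\otimes S^{n-1}\Ho^*(\reg_Z)[-1]$, now use the left column'' whereas you pass directly to the cone; one small expository slip is that you say $n\ge3$ enters ``only'' via $S^{n-2}\Ho^*(\reg_Z)\cong\IC[0]\oplus\IC[-d]$ and then invoke it again for the scalar $n-2\neq0$, but both uses are of course the same hypothesis.
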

\begin{proof}
 The component $\reg_{Z^2} \otimes S^{n-1}\Ho^*(\reg_Z)\to \reg_{Z^2} \otimes S^{n-2}\Ho^*(\reg_Z)$ of the map $\G''^R\G'\to \G''^R\G''$ as well as the whole $\G'^R\G'\to \G'^R\G''$ are isomorphisms. This yields $\G''^R\G\cong \reg_\Delta\otimes S^{n-1}\Ho^*(\reg_Z)[-1]\cong \reg_{\Delta}([0]\oplus[-d])[-1]$ and $\G'^R\G=0$. The result now follows from the triangle $\G''^R\G\to \G'^R\G\to \G^R\G$.
\end{proof}
That means that the cotwist of $G$ is an equivalence. For dimension reasons the second axiom of a spherical functor cannot hold for $G$. Thus, one could call $G$ a \textit{sphere-like functor} in analogy with the sphere-like objects of 
\cite{HKP}.
\subsection{The case $\Ho^*(\reg_Z)=\IC[0]$}
In this case also $S^k\Ho^*(\reg_Z)=\IC[0]$ for $k\ge 1$.
\begin{proposition}
The functor $G$ is fully faithful, i.e. $\G^R\G=\reg_\Delta$.
\end{proposition}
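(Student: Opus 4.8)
The plan is to read $\G^R\G$ off the commutative diagram \eqref{lattice}, exactly as in the two preceding subsections, exploiting the extra simplification that now $S^k\Ho^*(\reg_Z)=\IC[0]$ for \emph{every} $k\ge0$, not only for $k\ge1$. Because of this there is no $[-d]$-summand anywhere, and the argument goes through uniformly for all $n\ge2$, with no analogue of the case distinction $n\ge3$ that was needed in the odd-dimensional Calabi--Yau case. Substituting $S^k\Ho^*(\reg_Z)=\IC[0]$ into the four displayed formulas for the kernel compositions gives $\G'^R\G'\cong\G'^R\G''\cong(\reg_Z\boxtimes\omega_Z)[d]$, $\G''^R\G'\cong\reg_Z\boxtimes\reg_Z$, and $\G''^R\G''\cong\reg_\Delta\oplus(\reg_Z\boxtimes\reg_Z)$.

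First I would treat the middle row of \eqref{lattice}. Writing $e_1=\id_{\reg_Z}$ for the unique basis vector of $\Ho^*(\reg_Z)$, each component of the map $\G'^R\G'\to\G'^R\G''$ recalled above has the form $e_1\boxtimes\id[d]$ and is therefore the identity of $(\reg_Z\boxtimes\omega_Z)[d]$, so the whole map is a nonzero scalar multiple of the identity, hence an isomorphism; thus $\G'^R\G=0$. Next, in the top row, the component of $\G''^R\G'\to\G''^R\G''$ landing in $(\reg_Z\boxtimes\reg_Z)\otimes S^{n-2}\Ho^*(\reg_Z)\cong\reg_Z\boxtimes\reg_Z$ is again an evaluation map, hence an isomorphism, while the component landing in $\reg_\Delta\otimes S^{n-1}\Ho^*(\reg_Z)\cong\reg_\Delta$ is the restriction map. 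A morphism $A\to B\oplus A$ whose second component is an isomorphism has cone isomorphic to $B$, so the top row of \eqref{lattice} yields $\G''^R\G\cong\reg_\Delta[-1]$.

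Finally I would run the left-hand column of \eqref{lattice}: its exact triangle $\G''^R\G\to\G'^R\G\to\G^R\G$ now reads $\reg_\Delta[-1]\to0\to\G^R\G$, so $\G^R\G\cong\reg_\Delta$. To conclude that $G$ is fully faithful it then remains to observe that the unit $\reg_\Delta\to\G^R\G$ realizes this isomorphism: the unit is a nonzero morphism, since otherwise the triangle identities would force $G=0$, which fails as $G(\reg_Z)\ne0$ for $n\ge2$; and $\Hom(\reg_\Delta,\G^R\G)\cong\Hom(\reg_\Delta,\reg_\Delta)=\IC$, so every nonzero morphism $\reg_\Delta\to\G^R\G$ is an isomorphism.

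I do not expect a genuine obstacle here. The only conceptual input is that every symmetric power of $\Ho^*(\reg_Z)$ equals $\IC[0]$, after which the three exact triangles in \eqref{lattice} collapse the diagram; the single point requiring care is to identify the relevant structure maps as the evaluation maps and the restriction map, and to note that the evaluation maps become nonzero scalar multiples of identities once $\Ho^*(\reg_Z)=\IC[0]$ --- and this identification is exactly what the explicit description of the maps given just before the even-dimensional case provides.
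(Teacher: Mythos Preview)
Your proof is correct and follows essentially the same route as the paper: use diagram \eqref{lattice}, identify the evaluation components of $\G'^R\G'\to\G'^R\G''$ and $\G''^R\G'\to\G''^R\G''$ as identities on the $\reg_{Z^2}$ summands, deduce $\G'^R\G=0$ and $\G''^R\G\cong\reg_\Delta[-1]$, and then read off $\G^R\G\cong\reg_\Delta$ from the left column. The only addition you make beyond the paper's argument is the final paragraph verifying that the unit $\reg_\Delta\to\G^R\G$ realizes this isomorphism, which the paper leaves implicit.
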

\begin{proof}
The maps $\G''^R\G'\to \G''^R\G''$ and $\G'^R\G'\to \G'^R\G''$ are the identity on the components $\reg_{Z^2}$. Hence, $\G''^R\G\cong \reg_\Delta\otimes S^{n-1}\Ho^*(\reg_Z)[-1]$ and $\G'^R\G=0$. It follows that $\G^R\G=\reg_\Delta\otimes S^{n-1}\Ho^*(\reg_Z)=\reg_\Delta$. 
\end{proof}
\begin{remark}
In particular, when $Z=S$ is a surface, the Proposition in conjunction with Lemma \ref{surftrunc} reproves Theorem \ref{hilb-semiorth}. 
\end{remark}
\begin{remark}
In contrast to the case of the non-truncated ideal functor (see Remark \ref{kerR}) it is, at least for $n\ge 3$, easy to find an object in $\ker G^R$, namely $\reg_{Z^n}\otimes \alt$. Since for even $d=\dim Z$ also $G^R(\omega_{Z^n}\otimes \alt)=0$, we have in the case that $Z=X$ is an Enriques surface $T_G(\reg_{X^n}\otimes \alt)=\reg_{X^n}\otimes \alt$ by Equation (\ref{coneofT}). This shows that, although the functors $\hat F$ and $G$ are very similar, the induced twists differ at least slightly. It also follows by Remark \ref{twistrel} that $T_G$ and $T_{\reg_{X^n}\otimes \alt}$ commute.  
\end{remark}

\end{document}